\setlist[enumerate]{label={\rm(\roman*)}}
\colorlet{darkgreen}{green!80!black}
\theoremstyle{plain}
\newtheorem{theorem}{Theorem}[section]
\newtheorem{lemma}[theorem]{Lemma}
\newtheorem{corollary}[theorem]{Corollary}
\newtheorem*{proposition*}{Proposition}
\theoremstyle{definition}
\newtheorem{remark}[theorem]{Remark}
\newtheorem{definition}[theorem]{Definition}
\newtheorem{convention}[theorem]{Convention}
\numberwithin{theorem}{section}
\numberwithin{equation}{section}
\DeclarePairedDelimiter\abs{\lvert}{\rvert}
\renewcommand\d{\mathrm{d}}
\newcommand\dd{\,\d}
\def\emphdef{\emph}
\newcommand{\N}{\mathbb{N}}
\newcommand{\R}{\mathbb{R}}
\newcommand{\RR}{\mathcal R}
\newcommand{\MM}{\mathcal M}
\newcommand{\lgn}{\mathcal L}
\def\M{\mathcal M}
\newcommand{\ip}{{\frac{1}{p}}}
\newcommand{\ipo}{{\frac{1}{p_0}}}
\newcommand{\ipi}{{\frac{1}{p_i}}}
\newcommand{\ipon}
    {{\frac{1}{p_1}}}
\newcommand{\RM}{(\RR,\mu)}
\newcommand{\MRM}{\MM\RM}
\newcommand{\rn}{\R^n}
\def\ls{\lesssim}
\newcommand{\Wlgn}{W_{\kern-1.2pt\lgn}\kern0.7pt}
\newcommand{\pe}\relax
\newcommand{\pb}\relax
\newcommand{\emphm}\emph
\DeclareRobustCommand\onedot{\futurelet\@let@token\@onedot}
\def\@onedot{\ifx\@let@token.\else.\null\fi\xspace}
\def\paragraph{\bigskip\@startsection{paragraph}{4}%
  \z@\z@{-\fontdimen2\font}%
  {\normalfont\bfseries}}
\title{Interpolation of classical Lorentz spaces measuring oscillation}
\author{Amiran Gogatishvili, Julio S. Neves, Lubo\v s Pick and Hana Tur\v cinov\'a} 
\address{Amiran Gogatishvili, Institute of Mathematics of the Czech Academy of Sciences, \v Zitn\'a~25, 115~67 Praha~1, Czech Republic, ORCiD 0000-0003-3459-0355\\
E-mail: gogatish@math.cas.cz}
\address{J\'ulio S. Neves,
University of Coimbra, CMUC, Department of Mathematics, Apartado
3008, EC Santa Cruz, 3001-454 Coimbra, Portugal, ORCiD 0000-0002-7675-6862
\\
E-mail jsn@mat.uc.pt}
\address{Lubo\v s Pick, Department of Mathematical Analysis, Faculty of Mathematics and Physics, Charles University, Sokolovsk\'a~83, 186~75 Praha~8, Czech Republic, ORCID 0000-0002-3584-1454\\
E-mail: pick@karlin.mff.cuni.cz}
\address{Hana Tur\v cinov\'a, Czech Technical University in Prague, Faculty of Electrical Engineering, Department of Mathematics, Technick\'a~2, 166~27 Praha~6, Czech Republic; ORCID 0000-0002-5424-9413\\
E-mail: hana.turcinova@fel.cvut.cz}
\date{\today}
\thanks{This research was supported in part by  the Fundação para a Ciência e a Tecnologia (Portuguese Foundation for Science and Technology) under the scope of the projects UID/00324/2025 (https://doi.org/10.54499/UID/00324/2025) (Centre for Mathematics of the University of Coimbra)  and by the grant 23-07420S of the Czech Science Foundation. The research of A. Gogatishvili
was partially supported by the  Institute of Mathematics, CAS  RVO:67985840 and by Shota
Rustaveli National Science Foundation (SRNSF), grant no: FR22-17770. Hana Turčinová was supported in part by the Danube Region Grant no.~8X25005 (SOFTA) of the Czech Ministry of Education, Youth and Sports.}
\begin{document}

\begin{abstract}
    We obtain an explicit characterization of the $K$-functional of a pair of weighted classical Lorentz spaces of type $S$. We develop a method for obtaining such characterization based on a relation between the desired quantity and the $K$-functional of a specific couple of spaces of type $\Lambda$, which are substantially more manageable than their companions of type $S$. The core of our techniques is a subtle manipulation with respective fundamental functions. We present several applications, in particular we nail down a formula for the $K$-functional of a Lebesgue space and a classical Lorentz space of type $S$ with a power weight, and using this formula we establish an inequality of a reverse Marchaud type.
\end{abstract}

\maketitle

{\parindent=0pt\textbf{The paper is dedicated to Professor Hans Triebel, the true classic of the field of function spaces, at the occasion of his 90th birthday, with compliments from the authors.}}

\section{Introduction and main results}

Many important inequalities in analysis have the form
\begin{equation}
    \label{E:basic-estimate}
    (Af)^{**}(t)\le C(Af)^{*}(t) + \text{something},
\end{equation}
in which $A$ is some operator, $f\mapsto f^*$ denotes the operation of taking the non-increasing rearrangement of a real-valued function $f$ acting on some nonatomic $\sigma$-finite measure space, and, for $t\in(0,\infty)$, $f^{**}(t)$ is the integral mean of $f^*$ over the interval $(0,t)$. In the particular case when $C=1$, one can subtract $(Af)^{*}(t) $ from both sides of~\eqref{E:basic-estimate} and obtain thereby a useful pointwise upper estimate for the quantity $(Af)^{**}-(Af)^{*}$. 

A pioneering result in this direction (with $A$ being the identity operator) was obtained by Bennett, DeVore and Sharpley in~\cite{Ben:81}, where the space `Weak $L^{\infty}$' was introduced as the collection of all functions $f$ such that $f^{**}-f^{*}$
is bounded. It was shown that Weak $L^{\infty}$ can effectively replace $L^{\infty}$ when $L^{\infty}$ does not give satisfactory results, and, in particular, its interesting interpolation properties were found. It was also observed that Weak $L^{\infty}$ is not a linear set. It was identified as the rearrangement hull of the space $\operatorname{BMO}$ of functions having bounded mean oscillation.

In~\cite[Theorem~2]{Bag:86}, Bagby and Kurtz established a weighted version of an estimate of the form~\eqref{E:basic-estimate} and used it to gain control over a maximal singular integral defined by a~Calder\'on--Zygmund kernel in terms of the Hardy--Littlewood maximal operator. This technique eventually lead to the establishment of the so-called \emph{better $\lambda$-inequality}, a significant enhancement of the classical \emph{good $\lambda$-inequality} of Burkholder and Gundy (\cite[Theorem~2]{Bur:72}), which was first used to get estimates of the distribution function of the area integral in terms of the nontangential maximal function, and which was later exploited many times in a variety of situations, for example by Coifman in~\cite{Coi:72} or Coifman and Fefferman in  \cite{Coi:74}. It was however demonstrated in~\cite{Bag:86} that the `better $\lambda$' inequality contains more information and provides sharper results.

In~\cite[Equation (A7), page~219]{Alv:89}, Alvino, Trombetti and Lions established the inequality 
\begin{equation}
    \label{E:kolyada-inequality}
    f^{**}(t)-f^*(t)
    \le C_n t^{\frac{1}{n}}
    |\nabla f|^{**}(t)
    \quad\text{for $t\in(0,\infty)$,}
\end{equation}
which is valid for every locally integrable function $f\colon\rn\to\R$ whose first-order weak derivatives are locally integrable and
where $C_n$ is a dimensional constant.
The authors pointed out a connection to certain optimization problems involving gradients with prescribed rearrangements. In particular, they showed that this inequality can be used in order to give a short proof of a sharp version of the Sobolev--Lorentz inequality. An inequality equivalent to~\eqref{E:kolyada-inequality}, in which the functional $f^{**}(t)-f^*(t)$ is replaced by $f^{*}(t)-f^*(2t)$, was independently discovered by Kolyada in~\cite[Lemma 5.1]{Kol:89}. In \cite[Inequality~(3.1)]{Kol:07}), the inequality is established in the form
\begin{equation}
    \label{E:kolyada-inequality-2}
    f^{**}(t)-f^*(t)
    \le \sqrt{n} t^{\frac{1}{n}}
    |\nabla f|^{**}(t)
    \quad\text{for $t\in(0,\infty)$.}
\end{equation}
Inequalities in this spirit were intensively investigated by Mart\'{\i}n and Milman in~\cite{Mil:07}, where among other results it was shown that the inequality
\begin{equation}
    \label{E:martin-milman}
    f^{**}(t)-f^*(t)
    \le C_n t^{1-\alpha}
    |\nabla f|^{**}(t)
    \quad\text{for $t\in(0,\infty)$}
\end{equation}
for $\alpha\in(0,1)$ characterizes that the underlying domain belongs, owing to its isoperimetric profile, to the Maz'ya class $\mathcal J_{\alpha}$. Iterated versions are applied to higher-order derivatives and to obtain Sobolev--Poincaré inequalities including fractional versions. 

In~\cite[Inequality (3.24)]{Ker:06}, an estimate of the form~\eqref{E:basic-estimate} is given for a specific supremum-type operator, which plays a crucial role in the so-called \emph{reduction principle} and provides characterization of optimal partner spaces in sharp Sobolev embeddings.  
The crucial role of the quantity $f^{**}-f^{*}$ in limiting Sobolev embeddings was however spotted earlier, namely in~\cite{Bas:03}, and subsequently corroborated in~\cite{Mil:04} and~\cite{Pus:05}. A yet different quantity, namely $f^*(\frac t2)-f^*(t)$, intimately related to $f^{**}(t)-f^*(t)$, has been used to define various function classes of interest in specific applications. In~\cite{Mal:02}, this functional was utilized in order to describe a new scale of function spaces, one of which significantly enhanced the target function space in a limiting Sobolev embedding. It turned out that the first-order Sobolev space with the degree of integrability coinciding with the dimension of the underlying domain continuously embeds into this new space, providing a sharper result than embeddings that had been known before such as those from~\cite{Tru:67,Poh:65,Yud:61,One:63,Pee:66,Hun:66,Han:79,Bru:79,Bre:80,Maz:11,Cwi:98}. However, both the quantities are very close to one another, and sometimes they are equivalently interchangeable - see~\cite[Theorem~4.1]{Bas:03} or~\cite[Inequalities~(2.8) and~(2.12)]{Kol:07}. 

The quantity $f^{**}-f^{*}$ turned out to be of vital importance in the search of an explicit characterization of the optimal target classes for Sobolev inequalities (\cite[Theorem~A]{Ker:09}, \cite[Theorem~1.2]{Kub:26}), and it played similarly important role in trace inequalities~\cite{Cia:08} or potential estimates~\cite{Edm:20}.

The above examples clearly show that the expression $f^{**}-f^*$ has potential to define interesting function spaces. On the top of it, it turns out to be of particular use in the theory of the \emph{classical Lorentz spaces}. Let us recall that these spaces are of three types, which we will respectively refer to as of type $\Lambda$, of type $\Gamma$, and  of type $S$. For precise definitions and the summary of their basic functional analytic properties, see Section~\ref{S:preliminaries} below.

The discovery of the connection to classical Lorentz spaces is due to Sinnamon~\cite[Theorem~4.1]{Sin:02}, who used the quantity $f^{**}-f^*$ to give a new formula for the associate norm  of a space of type~$\Gamma$. Other (equivalent) characterizations were obtained for instance in~\cite{Gog:03,Gog:14}, but the formula in~\cite{Sin:02} for the associate norm is unique in the sense that it breaks the associate norm
into two parts corresponding to the size, represented by $f^*$, and the smoothness, represented by $f^{**}-f^{*}$, of a function $f$.

Roughly speaking, the spaces of types $\Lambda$, $\Gamma$ and $S$ are governed by the degree of (weighted) integrability of quantities $f^*$, $f^{**}$, and $ f^{**}-f^{*}$, respectively, where $f$ is a given function.
Of the threesome, the spaces of the type $S$ are the youngest, the most difficult to handle, and also the most interesting. They were introduced only in 2005 (\cite{Car:05}), in sharp contrast to those of the type  $\Gamma$, which first appeared in 1990 (\cite{Saw:90}) and in an even sharper contrast to those of the type  $\Lambda$, which were introduced already in 1951 (\cite{Lor:51}). Their study is so difficult mainly owing to the fact that these spaces systematically avoid nice functional properties. In particular, they are, in general, neither quasinormable, nor necessarily complete, as sets they are not necessarily linear, and they do not possess the lattice property (see~\cite{Car:05} for more details).

The introduction of spaces of type $S$ was, of course, originally motivated by the ubiquitous appearance of the quantity $f^{**}-f^{*}$, mentioned above. They have been used in order to characterize various dual structures of function classes (\cite{Sin:02,Gog:03,Gog:12,Kol:07,Gog:14,Ker:09}), they served as an appropriate tool for characterizing validity of the bilinear weighted Hardy inequality for monotone functions (\cite{Kre:17}), and they played a vital role in the research of continuity properties of solutions to the $p$-Laplace system of elliptic equations (\cite{Alb:17}).  We shall now describe their intimate connection to the theory of interpolation.

In the theory of real interpolation and its numerous applications, a central issue is the question of characterization of the $K$-functional for a prescribed pair of function spaces. In order to give such a characterization, a lot of effort has been spent by many authors, and the topic occupies substantial parts in classical monographs such as~\cite{Ste:71,Ber:76,Kre:82,Ben:88,Bru:91,Tri:95}. It is also a subject of several more recent papers motivated by various important operators (such as fractional integrals, for instance) whose endpoint behavior is nonstandard and hence the classical theory fails (see e.g.~\cite{Gog:09,Mal:12,Bae:22,Kub:25}).

The indispensable role of the functional $f^{**}-f^{*}$ in interpolation theory is thoroughly described in~\cite[Chapter~5, Section~7]{Ben:88}, where it is observed, among many other results, that this functional can be used to show that the non-increasing rearrangement of a function oscillates no more than does the function itself.

The power of the interpolation properties of spaces of type $S$ was found soon after they were first introduced. In~\cite{Gog:12}, the inequality~\eqref{E:kolyada-inequality-2} was used to obtain sharp estimates of rearrangements of functions in terms of their moduli of continuity. This result was in turn applied to get
sharp embeddings of general Besov spaces into Lorentz spaces and a characterization of 
the rearrangement-invariant hull of a general Besov space. The approach was based on evaluating the $K$-functional for pairs of spaces $(X,S_X)$, where $X$ is a generic rearrangement-invariant space containing functions defined on a Lipschitz domain in an Euclidean space, and $S_X$ is a generalized classical Lorentz space of type $S$ built upon $X$.  For further applications in a similar but slightly different direction, see~\cite{Gog:24}. 

The techniques in~\cite{Gog:12} however have some limitations that quite adequately correspond to the state of the art of knowledge at the time of their origin, and they naturally propagate in turn into related results and applications. For instance, the structures appearing in theorems of~\cite{Gog:12} are assumed to be r.i.~spaces, in particular normed, which constitutes an unfortunate restriction ruling out many interesting function classes endowed merely with quasinorms. Furthermore, the Boyd indices of the present function spaces are required to be kept away from the endpoints, which, once again, bans plenty of interesting limiting cases.

In this paper we look at the problem from a different angle. We  take an approach based on some recent advances, leading to a substantial extension of the theory in which, in particular, the above-mentioned restrictions are chipped away. Our key theoretical achievement is a pointwise characterization of the $K$-functional for a pair of spaces of type $S$. 

Our argument proceeds in several separate steps. First, we reduce the problem to the question of characterizing the $K$-functional for the pair of spaces of type $\Lambda$ appropriately tied to the original pair. Next, we axiomatize a pair of abstract quasinormed spaces in a way that enables us to establish a formula for their $K$-functional. As the next step, we 
apply the abstract result to a pair of spaces of type $\Lambda$. The combination of the partial results then yields a solution of the principal problem. Finally, we point out some applications of the main result, including an inequality of reverse Marchaud type related to an embedding of a homogeneous-type Sobolev space into a Lebesgue space.

Before stating the results, we  need to fix some notation. We denote by $L^{1}_{\operatorname{loc}}(0,\infty)$ the collection of all functions integrable over every subset of $(0,\infty)$ of finite measure (in particular, integrable near zero). Following~\cite{Car:05}, we introduce the set 
\begin{equation*}
    A=\left\{f\in L^{1}_{\operatorname{loc}}(0,\infty)\ \text{nonnegative, non-increasing and satisfying $\lim_{t\to\infty}f(t)=0$}\right\}
\end{equation*}
and the operator $T$, defined by
\begin{equation}
    \label{E:T}
    Tf(t) = 
    \int_{0}^{\frac{1}{t}}
    f(s)\dd s
    - \frac{1}{t}f\left(\frac{1}{t}\right)
    \quad\text{for $f\in L^{1}_{\operatorname{loc}}(0,\infty)$ and $t\in(0,\infty)$.}
\end{equation}
It is easy to see that if $f\in L^{1}_{\operatorname{loc}}(0,\infty)$ is nonnegative and non-increasing on $(0,\infty)$, then  
\begin{equation*}
    \lim_{t\to\infty}Tf(t)=0
\end{equation*}
and $Tf$ itself is non-increasing on $(0,\infty)$. Indeed, this follows from
\begin{equation}
    \label{E:T-star-alternative}
    Tf(t) = 
    \int_{0}^{\frac{1}{t}}
    \left(f(s)
    -f(\tfrac{1}{t})\right)\dd s
    \quad\text{for $t\in(0,\infty)$.}
\end{equation} 
Moreover, it was observed in~\cite[Lemma 2.1]{Car:05} that $T$ is idempotent in the sense that
\begin{equation}
    \label{E:T-idempotent}
    T\circ T=\operatorname{Id}\quad\text{on $A$},
\end{equation}
and that $T$ is self-adjoint with respect to the $L^1(0,\infty)$-pairing in the sense that
\begin{equation*}
    \int_{0}^{\infty}Tf(t)g(t)\,dt
    = 
    \int_{0}^{\infty}Tg(t)f(t)\,dt
    \quad\text{for every $f,g\in A$.}
\end{equation*}
In particular, it follows immediately from~\eqref{E:T-idempotent} that every $f\in A$ can be represented as $f=Th$ for some $h\in A$.
It might be useful to add that we shall be mostly applying the operator $T$ to $f^*$, where $f$ is defined on some $\sigma$-finite measure space and $f^*(\infty)=0$. In that case, one has
\begin{equation}
    \label{E:T-star}
    Tf^*(t) = 
    \int_{0}^{\frac{1}{t}}
    f^*(s)\dd s
    - \tfrac{1}{t}f^*(\tfrac{1}{t})
    =\frac{1}{t}\left(f^{**}(\tfrac{1}{t})-f^*(\tfrac{1}{t})\right)
    \quad\text{for $t\in(0,\infty)$.}
\end{equation}

Similarly as in~\cite{Ari:90} and~\cite{Car:05}, given $p\in(0,\infty)$ and a \emph{weight} (that is, nonnegative and measurable function) $w$ on $(0,\infty)$, we say that $w$ satisfies the condition $\operatorname{B}_p$, and write $w\in \operatorname{B}_p$, if there exists a positive constant $C$ such that
\begin{equation*}
    t^p\int_{t}^{\infty}\frac{w(s)}{s^p}\dd s
    \le C
    \int_{0}^{t}w(s)\dd s
    \quad\text{for $t\in(0,\infty)$,}
\end{equation*}
and that $w$ satisfies the \emph{reverse} condition $\operatorname{B}_p$, denoted by $w\in\operatorname{RB}_p$, if there exists a positive constant $C$ such that
\begin{equation*}
    \int_{0}^{t}w(s)\dd s
    \le C
    t^p\int_{t}^{\infty}\frac{w(s)}{s^p}\dd s\quad\text{for $t\in(0,\infty)$.}
\end{equation*}
When $p\in(0,\infty)$ and a weight $w$ on $(0,\infty)$ are fixed, we denote by $\widetilde w$ the function defined by
\begin{equation}
    \label{E:tilde-w}
    \widetilde{w}(t)=
    t^{p-2}w(\tfrac1t)
    \quad\text{for $t\in(0,\infty)$.}
\end{equation}
One has, by a change of variables (cf.~\cite[page~382]{Car:05}), 
\begin{equation*}
    w\in \operatorname{RB}_p\quad
    \Leftrightarrow\quad
    \widetilde w\in\operatorname B_p.
\end{equation*} 

\begin{convention}
    \label{CON:representation}
    When a quasi-Banach space $X=X\RM$ allows representation in the sense that there is a quasinorm $\|\cdot\|_{\overline X}$ such that $\|f\|_X=\|f^*\|_{\overline{X}}$ for every admissible $f$, we will sometimes, when there is no danger of confusion, write $X$ instead of $\overline X$ for the representation space. It will be always clear from the context whether the function whose norm is taken is defined on a general measure space or on an interval. Typically, all rearrangements, characteristic functions of intervals, and also all arguments and images of the operator $T$, are always considered to be defined on $(0,\infty)$. We will use the barred notation only in connection with abstract spaces.
\end{convention}

Our first result relates the $K$-functional of a pair of spaces of type $S$ to that of a pair of spaces of type $\Lambda$ with the help of the operator $T$ from~\eqref{E:T}. 

\begin{theorem}
    \label{T:1.1}
    Assume that $p_0,p_1\in(0,\infty)$, and let $w_0,w_1$ be weights on $(0,\infty)$. Define the functions $\psi_0,\psi_1\colon (0,\infty)\to[0,\infty]$ by
    \begin{equation}
        \label{E:varphi}
        \psi_i(t)
        =
        \left(\int_{t}^{\infty}s^{-p_i}w_i(s)\dd s
        \right)^{\frac{1}{p_i}}
        \quad\text{for $t\in(0,\infty)$ and $i\in\{0,1\}$.}
    \end{equation}
    Assume that there exists a positive constant $C$ such that
    \begin{equation}
        \label{E:cond1}
        \psi_i(t)
        \le C
        \psi_i(2t)
        \quad\text{for $t\in(0,\infty)$ and $i\in\{0,1\}$.}
    \end{equation}
    Suppose that
    \begin{equation}
        \label{E:cond2}
        w_0\in {\operatornamewithlimits{RB}}_{p_0}.
    \end{equation}
    Then, for every $f$ satisfying $f^*\in A$ and every $t\in(0,\infty)$, one has
    \begin{equation}
        \label{E:K-ineq}
        K(f,t;S^{p_0}(w_0),S^{p_1}(w_1))
        \approx
        K(Tf^*,t;\Lambda^{p_0}(\widetilde{w_0}),{\Lambda^{p_1}(\widetilde{w_1})}),
    \end{equation}
    in which the notation from~\eqref{E:tilde-w} has been adopted.  
\end{theorem}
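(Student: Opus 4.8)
The plan is to base everything on the single elementary fact that, after the substitution $\tau=1/t$, the $S$‑quasinorm \emph{is} a $\Lambda$‑quasinorm of the non‑increasing function $Tf^*$, and then to show that the two $K$‑functionals in \eqref{E:K-ineq} differ only through the way admissible decompositions of the two couples correspond — the whole work being to keep this correspondence under control by means of \eqref{E:cond1} and \eqref{E:cond2}. So first I would record the \emph{basic identity}: writing $\|f\|_{S^{p_i}(w_i)}^{p_i}=\int_0^\infty\bigl(f^{**}(\tau)-f^*(\tau)\bigr)^{p_i}w_i(\tau)\dd\tau$, substituting $\tau=1/t$, and invoking \eqref{E:T-star} together with \eqref{E:tilde-w}, one obtains
\[
  \|f\|_{S^{p_i}(w_i)}=\|Tf^*\|_{\Lambda^{p_i}(\widetilde{w_i})}\qquad(i\in\{0,1\}),
\]
valid whenever $f^*\in A$, since then $Tf^*\in A$ is its own non‑increasing rearrangement. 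The same substitution identifies the fundamental function of $\Lambda^{p_i}(\widetilde{w_i})$ with $t\mapsto\psi_i(1/t)$, so \eqref{E:cond1} is exactly a $\Delta_2$‑condition on those fundamental functions, while \eqref{E:cond2}, through $w_0\in\operatorname{RB}_{p_0}\Leftrightarrow\widetilde{w_0}\in\operatorname{B}_{p_0}$, says precisely that $\Lambda^{p_0}(\widetilde{w_0})=\Gamma^{p_0}(\widetilde{w_0})$ with equivalent quasinorms; equivalently, $\tfrac1t f^{**}(1/t)$ and $Tf^*(t)=\tfrac1t\bigl(f^{**}(1/t)-f^*(1/t)\bigr)$ have equivalent $\Lambda^{p_0}(\widetilde{w_0})$‑norms. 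This equivalence is the device that compensates for the failure of monotonicity of $T$.

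For the inequality ``$\lesssim$'' I would use the idempotence \eqref{E:T-idempotent}: given a decomposition $Tf^*=g_0+g_1$ with $g_0,g_1\in A$, linearity of $T$ and \eqref{E:T-idempotent} give $f^*=Tg_0+Tg_1$ with $Tg_0,Tg_1\in A$, and the standard equimeasurable‑splitting construction on the non‑atomic underlying measure space produces $f=f_0+f_1$ with $f_j^*=Tg_j$; the basic identity together with \eqref{E:T-idempotent} then yields $\|f_j\|_{S^{p_j}(w_j)}=\|g_j\|_{\Lambda^{p_j}(\widetilde{w_j})}$. Hence $K\bigl(f,t;S^{p_0}(w_0),S^{p_1}(w_1)\bigr)$ does not exceed the $K$‑functional of $Tf^*$ in $\bigl(\Lambda^{p_0}(\widetilde{w_0}),\Lambda^{p_1}(\widetilde{w_1})\bigr)$ computed over \emph{monotone} decompositions only, and under the $\Delta_2$‑condition \eqref{E:cond1} monotone decompositions of a non‑increasing function are, up to a multiplicative constant, as cheap as arbitrary ones, so this last quantity is $\lesssim K\bigl(Tf^*,t;\Lambda^{p_0}(\widetilde{w_0}),\Lambda^{p_1}(\widetilde{w_1})\bigr)$.

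For the reverse inequality ``$\gtrsim$'' I would start from an arbitrary $f=f_0+f_1$ and manufacture a cheap decomposition of $Tf^*$ in the $\Lambda$‑couple. Since $T$ is neither monotone nor subadditive, $Tf^*$ is not dominated by $Tf_0^*+Tf_1^*$; the remedy is to pass to the envelope $Q(t):=\tfrac1t f^{**}(1/t)$, which majorizes $Tf^*$ pointwise, is non‑increasing, and — crucially — satisfies $\|Q\|_{\Lambda^{p_0}(\widetilde{w_0})}\approx\|Tf^*\|_{\Lambda^{p_0}(\widetilde{w_0})}$ by \eqref{E:cond2}. The genuine subadditivity $f^{**}\le f_0^{**}+f_1^{**}$ (which, unlike that of $f^*$, loses no dilation) gives $Q\le\tfrac1t f_0^{**}(1/t)+\tfrac1t f_1^{**}(1/t)$; the lattice property of the $\Lambda$‑spaces transfers this splitting to $Tf^*$, and the identity $\tfrac1t f_j^{**}(1/t)=Tf_j^*(t)+\tfrac1t f_j^*(1/t)$ recovers the wanted ``oscillation'' parts $\|Tf_j^*\|_{\Lambda^{p_j}(\widetilde{w_j})}=\|f_j\|_{S^{p_j}(w_j)}$. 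The leftover ``size'' terms $\tfrac1t f_j^*(1/t)$ have $\Lambda^{p_j}(\widetilde{w_j})$‑norm equal to the plain $\Lambda^{p_j}(w_j)$‑norm of $f_j$: the one with $j=0$ is absorbed by $\|f_0\|_{S^{p_0}(w_0)}$ directly via \eqref{E:cond2}, while the one with $j=1$ must be rerouted — by expanding $f_1=f-f_0$ and estimating through $f^*$ and $f_0^*$, so that after the doubling \eqref{E:cond1} takes care of the resulting dilations it too is charged against the index‑$0$ channel, where \eqref{E:cond2} again finishes it. Taking the infimum over $f=f_0+f_1$ gives the reverse bound, and combining the two proves \eqref{E:K-ineq}.

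I expect this last direction to be the real obstacle: the non‑monotonicity of $T$ forces the detour through the $\Gamma$‑type envelope $Q$, the whole argument is held together by the $S$‑versus‑$\Gamma$ equivalence on the index‑$0$ side encoded in \eqref{E:cond2}, and the technically delicate point is the bookkeeping of the two ``size'' remainders that prevents any quantity from ever being measured with the ``wrong'' weight. By contrast, the change of variables of the first step is routine, and the two ingredients of the ``$\lesssim$'' step — the equimeasurable splitting and the near‑optimality of monotone decompositions for a $\Lambda$‑couple under a $\Delta_2$ hypothesis — are essentially standard.
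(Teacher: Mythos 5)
Your change of variables, the identity $\|f\|_{S^{p_i}(w_i)}=\|Tf^*\|_{\Lambda^{p_i}(\widetilde{w_i})}$, and the whole ``$\lesssim$'' direction (idempotence of $T$ to transfer monotone decompositions, plus the fact that for a quasinormed $\Lambda$-couple the restricted $K$-functional $K^d$ of a non-increasing function is comparable to $K$) coincide with the paper's argument. The genuine gap is in the ``$\gtrsim$'' direction. Your splitting starts from the symmetric inequality $f^{**}\le f_0^{**}+f_1^{**}$ and hence bounds the index-$1$ piece of the decomposition of $Tf^*$ by $Q_1(t)=\tfrac1t f_1^{**}(1/t)$, whose $\Lambda^{p_1}(\widetilde{w_1})$-norm is $\|f_1\|_{\Gamma^{p_1}(w_1)}$. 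To charge this against $t\|f_1\|_{S^{p_1}(w_1)}$ you would need $\Gamma^{p_1}(w_1)=S^{p_1}(w_1)$, i.e.\ $w_1\in\operatorname{RB}_{p_1}$ --- which is not assumed; only $w_0\in\operatorname{RB}_{p_0}$ is. Your proposed repair, splitting $Q_1=Tf_1^*+\tfrac1t f_1^*(1/t)$ and rerouting the size term via $f_1=f-f_0$, does not close the gap: after $f_1^*(2s)\le f^*(s)+f_0^*(s)$ you are left with $t\|f\|_{\Lambda^{p_1}(w_1)}$ and $t\|f_0\|_{\Lambda^{p_1}(w_1)}$, both measured with weight $w_1$ and exponent $p_1$. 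The first need not even be finite for $f\in S^{p_0}(w_0)+S^{p_1}(w_1)$, and the second measures $f_0$ in the wrong channel, where \eqref{E:cond2} (a condition on $w_0,p_0$) gives no purchase; nothing in $\|f_0\|_{S^{p_0}(w_0)}+t\|f_1\|_{S^{p_1}(w_1)}$ dominates these terms.

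The paper avoids this by an asymmetric pointwise estimate that never converts the index-$1$ contribution into a size: combining $f^{**}\le f_0^{**}+f_1^{**}$ with $f^*(t)\ge f_1^*(2t)-f_0^*(t)$ and $f_1^*(2t)\le\tfrac1t\int_t^{2t}f_1^*(s)\dd s$ yields
\begin{equation*}
 f^{**}(t)-f^*(t)\le 2f_0^{**}(t)+2\bigl(f_1^{**}(2t)-f_1^*(2t)\bigr),
\end{equation*}
so that only the index-$0$ piece becomes a $\Gamma$-type quantity (handled by \eqref{E:cond2} via $\Gamma^{p_0}(w_0)=S^{p_0}(w_0)$), while the index-$1$ piece stays an oscillation $Tf_1^*(t/2)$ whose dilation is absorbed using \eqref{E:cond1}. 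A secondary caveat: your identification of $\widetilde{w_0}\in\operatorname B_{p_0}$ with ``$\tfrac1tf^{**}(1/t)$ and $Tf^*(t)$ have equivalent $\Lambda^{p_0}(\widetilde{w_0})$-norms'' is not a tautology --- the latter is precisely the nontrivial equivalence $\Gamma^{p_0}(w_0)=S^{p_0}(w_0)$, which for $p_0\in(0,1)$ is not covered by the duality-based literature and requires a separate argument (the paper supplies one via a Hardy-type inequality).
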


In our next result we characterize the $K$-functional of a pair of fairly general r.i.~quasi-Banach function spaces. For a similar result, in a slightly different setting, see~\cite[Theorem~1a]{Mal:84}, and for some related results in certain more specific situations, see e.g.~\cite{Hol:70,Zip:71,Mil:79,Tor:79,Mil:82,Ara:83} and the references therein. Later we shall apply it to a pair of spaces of type $\Lambda$, and, via~\eqref{E:K-ineq}, to a pair of spaces of type $S$. However, we formulate the theorem for abstract spaces, since such a general statement is of independent interest, and the field of applications might be broader. 

\begin{theorem}
      \label{T:GeneralK}
  Let $A_0$ and $A_1$ be r.i.~quasi-Banach function spaces that admit, respectively, the representation spaces $\overline{A}_0$ and $\overline{A}_1$ over $(0,\infty)$. 
  For $i\in\{0,1\}$, let $\varphi_i(t)=\|\chi_{(0,t)}\|_{\overline{A}_i}$ for $t\in(0,\infty)$, and set
    \begin{equation*}
        \sigma
        =
        \frac{\varphi_0}{\varphi_1}.
    \end{equation*}
    Suppose that
    \begin{equation}
        \label{E:GenProp1}
        \left\|\chi_{(0,t)}(s)g^*(\tfrac{s}{2})\right\|_{\overline{A}_0}
        \lesssim 
        \sigma(t)\left\|g^*\right\|_{\overline{A}_1}\quad\text{for $t\in(0,\infty)$ and $g\in A_1$,}
    \end{equation}
        and
    \begin{equation}
        \label{E:GenProp2}
        \sigma(t)\left\|\chi_{(t,\infty)}(s)h^*(\tfrac{s}{2})\right\|_{\overline{A}_1}
        \lesssim 
        \left\|h^*\right\|_{\overline{A}_0}\quad\text{for $t\in(0,\infty)$ and $h\in A_0$}
    \end{equation}
with constants in the relations `$\lesssim$' independent of $g,\,h$ and $t$. Then
    \begin{equation}
    \label{E:GeneralK}
        K(f,\sigma(t);A_0,A_1)
        \approx
        \left\|\chi_{(0,t)}f^*\right\|_{\overline{A}_0}
        +
        \sigma(t)\left\|\chi_{(t,\infty)}f^*\right\|_{\overline{A}_1}
    \end{equation}
    for $f\in A_0+A_1$ and $t\in(0,\infty)$ with
    constants of the equivalence independent of  $f$ and $t$.
\end{theorem}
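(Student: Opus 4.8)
The plan is to establish~\eqref{E:GeneralK} by proving the two one-sided estimates separately. Throughout I would rely only on the lattice property and the quasi-triangle inequality of $\overline{A}_0$ and $\overline{A}_1$ (whose constants do not depend on $t$), on the subadditivity of the non-increasing rearrangement in the form $(g+h)^*(s_1+s_2)\le g^*(s_1)+h^*(s_2)$, on the boundedness on $\overline{A}_0$ and $\overline{A}_1$ of the dilation operator sending $v$ to the function $s\mapsto v(s/2)$, and on the identity $\sigma(t)\varphi_1(t)=\varphi_0(t)$ that comes straight from the definition of $\sigma$. Only one of the two directions will actually use the hypotheses~\eqref{E:GenProp1}--\eqref{E:GenProp2}.

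For the estimate ``$\lesssim$'' I would produce a single near-optimal decomposition and would \emph{not} invoke~\eqref{E:GenProp1}--\eqref{E:GenProp2}. Fixing $t$ and putting $\lambda=f^*(t)$, I would split $f=f_0+f_1$ with $f_0=\operatorname{sgn}(f)\,(|f|-\lambda)_+$ and $f_1=f-f_0$. Then $f_0^*=(f^*-\lambda)_+\le\chi_{(0,t)}f^*$ and $f_1^*=\lambda\chi_{(0,t)}+\chi_{(t,\infty)}f^*$, so the lattice property gives $\|f_0\|_{A_0}\le\|\chi_{(0,t)}f^*\|_{\overline{A}_0}$, and the quasi-triangle inequality together with $\|\lambda\chi_{(0,t)}\|_{\overline{A}_1}=\lambda\varphi_1(t)$ gives
\[
\sigma(t)\|f_1\|_{A_1}\lesssim\sigma(t)\lambda\varphi_1(t)+\sigma(t)\|\chi_{(t,\infty)}f^*\|_{\overline{A}_1}.
\]
Since $\sigma(t)\varphi_1(t)=\varphi_0(t)$ and $\lambda=f^*(t)\le f^*(s)$ for $s\in(0,t)$, the first summand equals $\lambda\varphi_0(t)=\|\lambda\chi_{(0,t)}\|_{\overline{A}_0}\le\|\chi_{(0,t)}f^*\|_{\overline{A}_0}$. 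Adding the bounds for $\|f_0\|_{A_0}$ and $\sigma(t)\|f_1\|_{A_1}$ then controls $K(f,\sigma(t);A_0,A_1)$ by a constant multiple of the right-hand side of~\eqref{E:GeneralK}; if that right-hand side is infinite there is nothing to prove, and otherwise these same bounds guarantee $f_0\in A_0$ and $f_1\in A_1$.

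For the estimate ``$\gtrsim$'' I would take an \emph{arbitrary} decomposition $f=g+h$ with $g\in A_0$ and $h\in A_1$ and use subadditivity to get $f^*(s)\le g^*(s/2)+h^*(s/2)$. By the lattice and quasi-triangle properties of $\overline{A}_0$,
\[
\|\chi_{(0,t)}f^*\|_{\overline{A}_0}\lesssim\bigl\|\chi_{(0,t)}(s)\,g^*(s/2)\bigr\|_{\overline{A}_0}+\bigl\|\chi_{(0,t)}(s)\,h^*(s/2)\bigr\|_{\overline{A}_0};
\]
the first (``diagonal'') term is at most $\|g^*(\cdot/2)\|_{\overline{A}_0}\lesssim\|g\|_{A_0}$ by boundedness of the dilation, while the second (``cross'') term is $\lesssim\sigma(t)\|h\|_{A_1}$ by~\eqref{E:GenProp1} applied to $h\in A_1$. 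Symmetrically, applying~\eqref{E:GenProp2} to $g\in A_0$ on the cross term and the dilation bound on the diagonal one,
\[
\sigma(t)\|\chi_{(t,\infty)}f^*\|_{\overline{A}_1}\lesssim\|g\|_{A_0}+\sigma(t)\|h\|_{A_1}.
\]
Adding these last two displays and taking the infimum over all admissible decompositions $f=g+h$ yields that the right-hand side of~\eqref{E:GeneralK} is $\lesssim K(f,\sigma(t);A_0,A_1)$, with constants independent of $f$ and $t$.

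The scheme is structurally short, so the main obstacle is checking that each auxiliary ingredient survives the passage from normed to merely quasi-normed spaces \emph{with constants uniform in} $t$: in particular, the boundedness on r.i.\ quasi-Banach function spaces of the dilation operator $v\mapsto v(\cdot/2)$ — which plays here the role that the classical $\max\{1,s\}$-dilation bound plays in the Banach setting — must be available, and I would quote it from Section~\ref{S:preliminaries}; one must also keep track of the finitely many quasi-triangle constants so that the equivalence constants in~\eqref{E:GeneralK} end up depending on $A_0$ and $A_1$ alone.
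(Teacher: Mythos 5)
Your proof is correct and follows essentially the same route as the paper's: your upper bound uses the identical truncation of $f$ at level $f^*(t)$ together with $\sigma(t)\varphi_1(t)=\varphi_0(t)$ and the monotonicity of $f^*$, and your lower bound uses the same subadditivity estimate $f^*(s)\le g^*(\tfrac s2)+h^*(\tfrac s2)$ with the cross terms absorbed by \eqref{E:GenProp1}--\eqref{E:GenProp2} and the diagonal terms by the dilation operator. The boundedness of $v\mapsto v(\cdot/2)$ that you rightly flag as the delicate point is relied on (implicitly) in the paper's own argument as well, and Section~\ref{S:preliminaries} justifies it only for spaces of type $\Lambda$ under the $\Delta_2$-condition --- precisely the setting in which the theorem is later applied --- so this is a caveat shared with the paper rather than a gap in your proposal.
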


We shall finally formulate an explicit formula for the $K$-functional for a pair of spaces of type $S$. 

\begin{theorem}
    \label{T:2}
    Assume that $p_0,p_1\in(0,\infty)$, and $w_0,w_1$ are weights on $(0,\infty)$.  Define the functions $\psi_0,\psi_1\colon (0,\infty)\to[0,\infty]$ by~\eqref{E:varphi} and suppose that
    \begin{equation}
        \label{E:psi-infinity-at-zero}
        \psi_i(0+)=\infty
        \quad\text{for $i\in\{0,1\}$,}
    \end{equation}
    and that conditions~\eqref{E:cond1} and~\eqref{E:cond2} are satisfied. Denote
    \begin{equation}
        \label{E:sigma}
        \theta(t)=
        \frac{\psi_0(t)}
        {\psi_1(t)}
        \quad\text{for $t\in(0,\infty)$.}
    \end{equation}
    Assume that there exists an $\varepsilon>0$ such that
    \begin{equation}
        \label{E:cond3}
        t
        \mapsto \theta(t) \psi_0(t)^{\varepsilon}
        \quad\text{is equivalent to a non-decreasing function on $(0,\infty)$.}
    \end{equation}
    Then
    \begin{align}
        \label{E:K-for-S}
        &K(f,\theta(t);S^{p_0}(w_0),S^{p_1}(w_1))
            \\
        &\quad\approx
        \left(
        \int_{0}^{t}
        \left(
        f^{**}(s)-f^*(s)
        \right)^{p_0}w_0(s)\dd s
        \right)^{\ipo}+
        \theta(t)
        \left(
        \int_{t}^{\infty}
        \left(
        f^{**}(s)-f^*(s)
        \right)^{p_1}w_1(s)\dd s
        \right)^{\ipon}\nonumber
    \end{align}
    for $f\in S^{p_0}(w_0)+S^{p_1}(w_1)$ and $t\in(0,\infty)$
    with constants in the relation `$\approx$' independent of~$f$ and~$t$.
\end{theorem}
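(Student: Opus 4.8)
The plan is to chain Theorem~\ref{T:1.1} with Theorem~\ref{T:GeneralK}. Theorem~\ref{T:1.1}, whose hypotheses~\eqref{E:cond1} and~\eqref{E:cond2} are among those assumed here, converts $K\bigl(f,\theta(t);S^{p_0}(w_0),S^{p_1}(w_1)\bigr)$ into $K\bigl(Tf^*,\theta(t);\Lambda^{p_0}(\widetilde{w_0}),\Lambda^{p_1}(\widetilde{w_1})\bigr)$ up to equivalence, and Theorem~\ref{T:GeneralK} will then supply a Holmstedt-type formula for the latter, which a change of variables turns into~\eqref{E:K-for-S}. The preparatory computation is that, for a weight $v$ and $p\in(0,\infty)$, the representation space of $\Lambda^{p}(v)$ satisfies $\|\chi_{(0,t)}\|=\bigl(\int_0^t v\bigr)^{1/p}$, so that~\eqref{E:tilde-w} and the substitution $u\mapsto 1/u$ yield $\varphi_i(t):=\|\chi_{(0,t)}\|_{\overline{\Lambda^{p_i}(\widetilde{w_i})}}=\psi_i(1/t)$. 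Hence the function $\sigma$ of Theorem~\ref{T:GeneralK} associated with the couple $\bigl(\Lambda^{p_1}(\widetilde{w_1}),\Lambda^{p_0}(\widetilde{w_0})\bigr)$ — taken in this order, which is the one for which the hypotheses of that theorem will hold — is $t\mapsto\psi_1(1/t)/\psi_0(1/t)=1/\theta(1/t)$, and condition~\eqref{E:cond1} guarantees that each $\Lambda^{p_i}(\widetilde{w_i})$ is an r.i.~quasi-Banach function space, so that Theorem~\ref{T:GeneralK} is applicable once~\eqref{E:GenProp1} and~\eqref{E:GenProp2} are checked.

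Granting those two hypotheses, I would apply the conclusion~\eqref{E:GeneralK} of Theorem~\ref{T:GeneralK} to the couple $\bigl(\Lambda^{p_1}(\widetilde{w_1}),\Lambda^{p_0}(\widetilde{w_0})\bigr)$ and to the function $Tf^*$, which lies in $A$ and so coincides with its own non-increasing rearrangement, at the point $1/t$, where $\sigma$ equals $1/\theta(t)$. Using the elementary identity $K(g,s;X,Y)=s\,K(g,1/s;Y,X)$ to pass back to the order of the couple occurring in~\eqref{E:K-ineq}, and then~\eqref{E:K-ineq} of Theorem~\ref{T:1.1}, one obtains
\[
K\bigl(f,\theta(t);S^{p_0}(w_0),S^{p_1}(w_1)\bigr)\approx\bigl\|\chi_{(1/t,\infty)}Tf^*\bigr\|_{\Lambda^{p_0}(\widetilde{w_0})}+\theta(t)\,\bigl\|\chi_{(0,1/t)}Tf^*\bigr\|_{\Lambda^{p_1}(\widetilde{w_1})}.
\]
Finally, inserting~\eqref{E:tilde-w} and formula~\eqref{E:T-star} for $Tf^*$ and performing once more the substitution $u\mapsto1/u$, one computes $\|\chi_{(1/t,\infty)}Tf^*\|_{\Lambda^{p_0}(\widetilde{w_0})}^{p_0}=\int_0^t\bigl(f^{**}(u)-f^*(u)\bigr)^{p_0}w_0(u)\dd u$ and $\|\chi_{(0,1/t)}Tf^*\|_{\Lambda^{p_1}(\widetilde{w_1})}^{p_1}=\int_t^\infty\bigl(f^{**}(u)-f^*(u)\bigr)^{p_1}w_1(u)\dd u$, which is exactly the right-hand side of~\eqref{E:K-for-S}.

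The heart of the argument, and the step I expect to be the main obstacle, is the verification of~\eqref{E:GenProp1} and~\eqref{E:GenProp2} for the couple $\bigl(\Lambda^{p_1}(\widetilde{w_1}),\Lambda^{p_0}(\widetilde{w_0})\bigr)$. The dilations $g^*(s/2)$ and $h^*(s/2)$ are harmless: condition~\eqref{E:cond1} amounts to the doubling property $\varphi_i(2t)\lesssim\varphi_i(t)$, and a routine comparison of the measures $\widetilde{w_i}(s)\dd s$ and $\widetilde{w_i}(2s)\dd s$ via their distribution functions allows one to replace $g^*(s/2)$ and $h^*(s/2)$ by $g^*(s)$ and $h^*(s)$ at the cost of constants. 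What remains are two weighted Hardy-type inequalities for monotone functions which, after undoing the substitution $u\mapsto 1/u$ built into~\eqref{E:tilde-w}, compare $\int_0^\rho$ and $\int_\rho^\infty$ integrals of monotone functions against $s^{-p_i}w_i(s)\dd s$; this is precisely where the monotonicity hypothesis~\eqref{E:cond3} on $\theta\psi_0^{\varepsilon}$ is decisive, forcing one of the two summands of the Holmstedt decomposition to dominate on each dyadic scale and so closing the summation. Condition~\eqref{E:cond2}, equivalently $\widetilde{w_0}\in\operatorname{B}_{p_0}$, underlies the already-available Theorem~\ref{T:1.1} and is of the same Hardy-inequality nature, reappearing in these estimates; and~\eqref{E:psi-infinity-at-zero} serves as a non-degeneracy assumption, making $\theta$ equivalent to a bijection of $(0,\infty)$ so that the argument $\theta(t)$ sweeps out every scale. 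Carrying out this matching, and tracking which of~\eqref{E:cond2}, \eqref{E:cond3} governs which hypothesis, is the delicate part; the rest is bookkeeping with the self-adjoint idempotent operator $T$ and elementary changes of variables.
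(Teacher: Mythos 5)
Your architecture coincides with the paper's: reduce via Theorem~\ref{T:1.1} to the couple $(\Lambda^{p_0}(\widetilde{w_0}),\Lambda^{p_1}(\widetilde{w_1}))$, observe that $\varphi_i(t)=\psi_i(1/t)$ so that the $\sigma$ of Theorem~\ref{T:GeneralK} for the swapped couple is $1/\theta(1/t)$, apply Theorem~\ref{T:GeneralK}, and undo the substitution with $K(g,s;X,Y)=sK(g,1/s;Y,X)$ and Lemma~\ref{L:SandTLambda}. However, the step you yourself flag as the obstacle --- verifying \eqref{E:GenProp1} and \eqref{E:GenProp2} for the couple $(\Lambda^{p_1}(\widetilde{w_1}),\Lambda^{p_0}(\widetilde{w_0}))$ --- is left as a heuristic (``one of the two summands dominates on each dyadic scale''), and this is precisely the content that the paper devotes Lemmas~\ref{Lem:SufCondGeneralK}, \ref{Lem:SufCondGeneralKLambda} and~\ref{Lem:LorentLambdaSpaceSufCondGeneralK} to. The mechanism the paper uses is not a dyadic summation but a factorization: \eqref{E:GenProp1}--\eqref{E:GenProp2} are first reduced (Lemma~\ref{Lem:SufCondGeneralK}, using boundedness of dilations) to conditions on the fundamental functions alone, namely $\|\chi_{(0,t)}/\varphi_1\|_{\overline A_0}\lesssim\sigma(t)$ and its dual; then, under the monotonicity of $\sigma/\varphi_1^{\varepsilon}$, one writes $1/\varphi_1=(\varphi_0^{1/(1+\varepsilon)}/\varphi_1)\cdot\varphi_0^{-1/(1+\varepsilon)}$, pulls the monotone ratio out of the $\Lambda$-quasinorm, and computes the remaining integral exactly via $d\varphi_0^{p_0}=w_0\,dt$. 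You would need to supply this argument (or a genuine substitute) and to check explicitly that \eqref{E:cond3} for $\theta\psi_0^{\varepsilon}$ transforms, under $t\mapsto 1/t$, into the monotonicity hypothesis \eqref{E:ratio-monotone} for the swapped couple; as written, your proposal asserts rather than proves this.

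A secondary inaccuracy: your final computation claims the exact identity $\|\chi_{(1/t,\infty)}Tf^*\|_{\Lambda^{p_0}(\widetilde{w_0})}^{p_0}=\int_0^t(f^{**}-f^*)^{p_0}w_0$. Since the quasinorm of $\Lambda^{p_0}(\widetilde{w_0})$ rearranges its argument, the left-hand side is $\int_0^{\infty}\bigl(\chi_{(1/t,\infty)}Tf^*\bigr)^*(s)^{p_0}\widetilde{w_0}(s)\,\d s$, in which the truncated tail is shifted back to the origin; it is only \emph{equivalent} to $\int_{1/t}^{\infty}Tf^*(s)^{p_0}\widetilde{w_0}(s)\,\d s$ (which by \eqref{E:prequel-S-Lambda} equals the right-hand side), and this equivalence requires the $\Delta_2$-property of the fundamental function, i.e.\ \eqref{E:cond1}. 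The paper isolates exactly this point in Lemma~\ref{lemma:equivalentKfunctIntegral}. This is fixable bookkeeping, but as stated the equality is false in general.
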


The following corollary constitutes a useful particular case of Theorem~\ref{T:2}.

\begin{corollary}
    \label{C:1}
    Let $p\in(1,\infty)$ and $\alpha\in(0,\infty)$. Then
    \begin{align}
        \label{E:corollary-1}
        K(f,t^{\frac{\alpha}{p}};L^p,S^{p}(t^{-\alpha}))
            &\approx
        \left(
        \int_{0}^{t}
        \left(
        f^{**}(s)-f^*(s)
        \right)^{p}\dd s
        \right)^{\ip}
            \nonumber\\
        &\qquad+
        t^{\frac{\alpha}{p}}
        \left(
        \int_{t}^{\infty}
        \left(
        f^{**}(s)-f^*(s)
        \right)^{p}s^{-\alpha}\dd s
        \right)^{\ip}
        \quad\text{for $t\in(0,\infty)$.}
        \nonumber
    \end{align}    
\end{corollary}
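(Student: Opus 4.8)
The plan is to read Corollary~\ref{C:1} off from Theorem~\ref{T:2} in the special case $p_0=p_1=p$, $w_0\equiv1$ and $w_1(s)=s^{-\alpha}$, and then to identify the space $S^{p}(1)$ with the Lebesgue space $L^{p}$. With these choices the integrals in~\eqref{E:varphi} converge (this uses $p>1$ and $\alpha>0$), and a direct computation gives
\[
\psi_0(t)=\Bigl(\int_t^\infty s^{-p}\dd s\Bigr)^{\ip}=(p-1)^{-\ip}\,t^{\frac{1-p}{p}},
\qquad
\psi_1(t)=\Bigl(\int_t^\infty s^{-p-\alpha}\dd s\Bigr)^{\ip}=(p+\alpha-1)^{-\ip}\,t^{\frac{1-p-\alpha}{p}},
\]
so that the function $\theta=\psi_0/\psi_1$ from~\eqref{E:sigma} equals $\bigl(\tfrac{p+\alpha-1}{p-1}\bigr)^{\ip}\,t^{\frac{\alpha}{p}}$; in particular $\theta(t)\approx t^{\alpha/p}$ with constants depending only on $p$ and $\alpha$.

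I would then verify the hypotheses of Theorem~\ref{T:2}. Condition~\eqref{E:psi-infinity-at-zero} holds since each $\psi_i$ is a strictly negative power of $t$; condition~\eqref{E:cond1} holds since each $\psi_i$ is a power function, so $\psi_i(t)/\psi_i(2t)$ is constant; condition~\eqref{E:cond2}, namely $1\in\operatorname{RB}_p$, is the identity $\int_0^t1\dd s=t=(p-1)\,t^p\int_t^\infty s^{-p}\dd s$; and since $\theta(t)\psi_0(t)^{\varepsilon}\approx t^{(\alpha-\varepsilon(p-1))/p}$ is (equivalent to) a non-decreasing function as soon as $0<\varepsilon\le\alpha/(p-1)$, condition~\eqref{E:cond3} holds too --- this is the only place the hypothesis $\alpha>0$ enters. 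Theorem~\ref{T:2} then yields~\eqref{E:K-for-S} for the couple $(S^p(1),S^p(s^{-\alpha}))$ with $\theta$ in the role of the parameter, i.e.\ the right-hand side of~\eqref{E:corollary-1} but with $\theta(t)$ in place of $t^{\alpha/p}$.

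The single step of genuine content is the identification $S^p(1)=L^p$, equivalently $\|f\|_{L^p}\approx\bigl(\int_0^\infty(f^{**}(s)-f^*(s))^p\dd s\bigr)^{\ip}$ for $p\in(1,\infty)$. The inequality `$\lesssim$' is immediate: $f^{**}-f^*\le f^{**}$, and by Hardy's inequality $\|f^{**}\|_{L^p}\lesssim\|f^*\|_{L^p}=\|f\|_{L^p}$. For the reverse inequality I would set $g=f^{**}$ and note that $g$ is non-increasing and locally absolutely continuous on $(0,\infty)$ with $g(\infty)=0$ and $s\,|g'(s)|=f^{**}(s)-f^*(s)$ for a.e.\ $s$, whence $g(s)=\int_s^\infty|g'(u)|\dd u$; applying the weighted Hardy inequality $\int_0^\infty\bigl(\int_s^\infty h(u)\dd u\bigr)^p\dd s\lesssim\int_0^\infty\bigl(s\,h(s)\bigr)^p\dd s$, valid for $p>1$, with $h=|g'|$ then gives $\|f\|_{L^p}^p\le\|g\|_{L^p}^p\lesssim\int_0^\infty(f^{**}-f^*)^p\dd s$. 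Thus $S^p(1)=L^p$ with equivalent quasinorms (on the functions that matter here, namely those with $f^*(\infty)=0$), and consequently $K(f,\cdot\,;S^p(1),S^p(s^{-\alpha}))\approx K(f,\cdot\,;L^p,S^p(s^{-\alpha}))$.

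Finally I would remove the discrepancy between $\theta(t)$ and $t^{\alpha/p}$: writing $\theta(t)=c_{p,\alpha}\,t^{\alpha/p}$ with $c_{p,\alpha}>0$ and using $\min(\lambda,1)\,K(f,s)\le K(f,\lambda s)\le\max(\lambda,1)\,K(f,s)$ for every fixed $\lambda>0$, one may replace $\theta(t)$ by $t^{\alpha/p}$ both as the parameter of the $K$-functional and as the multiplicative factor in the second summand. Combining the three ingredients produces exactly~\eqref{E:corollary-1}, with constants of equivalence depending only on $p$ and $\alpha$. I expect the main obstacle to be the identification $S^p(1)=L^p$ --- including the small reduction to functions vanishing at infinity --- while the remainder is a routine specialization of Theorem~\ref{T:2} together with elementary monotonicity of the $K$-functional.
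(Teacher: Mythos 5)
Your proposal is correct and follows essentially the same route as the paper: specialize Theorem~\ref{T:2} to $p_0=p_1=p$, $w_0\equiv1$, $w_1(s)=s^{-\alpha}$, check the hypotheses (which reduce to power-function computations), and identify $S^p(1)=L^p$. The only difference is that you prove the identification $S^p(1)=L^p$ directly via the dual Hardy inequality, whereas the paper simply cites \cite[Chapter~5, Proposition~7.12]{Ben:88}; your extra care with the constant in $\theta(t)\approx t^{\alpha/p}$ is also fine but not needed beyond what the equivalence already absorbs.
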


We shall now point out an interesting application of Corollary~\ref{C:1} to an inequality of reverse Marchaud type. Such inequalities are central for example in the theory of approximations. We  shall use the known formula for the $K$-functional for the pair $(V^{k,p}(\R^n),L^p(\R^n))$, see e.g.~\cite[Chapter 5, Inequality~(4.42), p.~341]{Ben:88}. It will restrict us to $p\in(1,n)$. We will need the notion of the \emph{modulus of smoothness}. For $h\in{\mathbb R}^n$, the first difference operator $\Delta_h$ is
defined on a real function $f\colon\rn\to\R$ by
\begin{equation*}
   \Delta_hf(x)=f(x+h)-f(x) 
   \quad\text{for $x\in{\mathbb R^n}$,} 
\end{equation*}
and, for $k\in\mathbb N$, $k\ge2$, the $k$-th order difference operator is defined inductively by
\begin{equation*}
    \Delta_h^{k}f(x)=\Delta_h(\Delta_h^{k-1}f)(x),\quad\text{for $x\in{\mathbb R^n}$.}
\end{equation*}
For $p\in[1,\infty]$, the \emph{$p$-modulus of smoothness of order $k$} of a function $f\in L^p({\mathbb R}^n)$ is defined by
$$
\omega_k(f,t)_p=\sup_{|h|\leq t}\|\Delta_h^kf\|_{L^p({\mathbb R}^n)}\quad\text{for $t\in(0,\infty)$.} 
$$

\begin{corollary}
    \label{C:2}
    Assume that $n\in\N$, $n\ge 2$, and $p\in(1,n)$. Let $k\in\N$ be such that 
    \begin{equation}
        \label{E:conditions-on-k}
            2\le k<\frac{n}{p}+1.
    \end{equation}
    Then
    \begin{align}
        \label{E:C2}
        &\left(\int_{0}^{t}
        \left(f^{**}(s)-f^*(s)\right)^{p}\dd s
        \right)^{\frac{1}{p}}
        +
        t^{\frac{k}{n}}
        \left(\int_{t}^{\infty}
        \left(f^{**}(s)-f^*(s)\right)^{p}s^{-\frac{kp}{n}}\dd s
        \right)^{\frac{1}{p}}
            \ls 
        \omega_k(f,t^{\frac{1}{n}})_p
    \end{align}
    for $t\in(0,\infty)$ and every $(k-1)$-times weakly differentiable function $f\colon\rn\to\R$ such that
    \begin{equation}
        \label{E:condition-at-infty}
        \lim_{t\to\infty}|\nabla^{j}f|^{**}(t)=0
        \quad\text{for each $j\in\{0,\dots,k-1\}$.}
    \end{equation}
\end{corollary}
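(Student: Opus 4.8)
The plan is to combine Corollary~\ref{C:1} with the known characterization of the $K$-functional for the couple $(V^{k,p}(\rn),L^p(\rn))$. Concretely, set $\alpha=\frac{kp}{n}$ in Corollary~\ref{C:1}; then $\frac{\alpha}{p}=\frac{k}{n}$ and the left-hand side of~\eqref{E:C2} is, up to the factor $t^{k/n}$ being written as $(t^{k/n})$, exactly
\[
K\bigl(f,t^{\frac{k}{n}};L^p,S^{p}(s^{-\frac{kp}{n}})\bigr).
\]
Note that the hypothesis $p\in(1,\infty)$ of Corollary~\ref{C:1} holds since $p\in(1,n)$, and $\alpha=\frac{kp}{n}\in(0,\infty)$, so Corollary~\ref{C:1} applies. (Here I am implicitly reading $V^{k,p}$ as the homogeneous-type Sobolev space whose $K$-functional with $L^p$ is controlled by the modulus of smoothness $\omega_k(f,\cdot)_p$; the condition $k<\frac{n}{p}+1$ from~\eqref{E:conditions-on-k} is precisely what makes the cited formula from~\cite[Chapter~5, (4.42), p.~341]{Ben:88} available, and the condition~\eqref{E:condition-at-infty} guarantees that $f$ and its derivatives up to order $k-1$ lie in $S^{p}(s^{-kp/n})+L^p$ so that the left-hand side is finite and the $K$-functional is the right object to estimate.)

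The key step is then the monotonicity of the $K$-functional in the couple: since the embedding $V^{k,p}(\rn)\hookrightarrow S^{p}(s^{-\frac{kp}{n}})$ holds — which is itself a consequence of an inequality of Kolyada--Alvino--Trombetti--Lions type such as~\eqref{E:kolyada-inequality-2} iterated $k-1$ times (this is where~\eqref{E:condition-at-infty} is used), giving a pointwise bound $f^{**}(s)-f^*(s)\lesssim s^{k/n}|\nabla^{k-1}f|^{**}(s^{?})$-style control that places $f$ in the type-$S$ space with the right power weight — we obtain
\[
K\bigl(f,\tau;L^p,S^{p}(s^{-\frac{kp}{n}})\bigr)
\lesssim
K\bigl(f,\tau;L^p,V^{k,p}(\rn)\bigr)
\qquad\text{for }\tau\in(0,\infty).
\]
Evaluating at $\tau=t^{k/n}$ and invoking the classical formula $K\bigl(f,\tau;L^p,V^{k,p}(\rn)\bigr)\approx\omega_k(f,\tau^{?})_p$ with the correct dilation of the time variable — namely that $K$ with time $t^{k/n}$ corresponds to $\omega_k(f,t^{1/n})_p$, which matches the scaling $t^{k/n}=(t^{1/n})^k$ inherent to a $k$-th order modulus — yields~\eqref{E:C2}.

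The main obstacle, and the part that will require genuine care rather than bookkeeping, is pinning down the precise form of the $K$-functional formula for $(V^{k,p}(\rn),L^p(\rn))$ and reconciling its normalization of the time variable with the $t^{k/n}$ appearing through Corollary~\ref{C:1}. One must verify that the cited formula from~\cite{Ben:88} indeed gives $K(f,t^{k/n};L^p,V^{k,p})\approx\omega_k(f,t^{1/n})_p$ under the restriction $2\le k<\frac{n}{p}+1$, check that the homogeneous (seminorm) version versus the inhomogeneous version does not spoil the estimate — the direction we need is only the upper bound on the left-hand side, so replacing the full norm by the seminorm can only help — and confirm that the embedding $V^{k,p}\hookrightarrow S^{p}(s^{-kp/n})$ with the attendant constant is available in the homogeneous setting, which is exactly the iterated reverse-type inequality whose endpoint-free treatment is the theme of the paper. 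A secondary technical point is that Corollary~\ref{C:1} as stated is a two-sided equivalence for a fixed $f\in S^{p}+L^p$; one should remark that any $f$ satisfying~\eqref{E:condition-at-infty} with $f\in L^p$ and $\nabla^{k-1}f\in L^p$ automatically lies in $S^{p}(s^{-kp/n})+L^p$, so that the left-hand side of~\eqref{E:C2} is finite and the chain of inequalities above is legitimate.
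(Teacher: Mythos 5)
Your overall route coincides with the paper's: identify the left-hand side of \eqref{E:C2} as $K(f,t^{k/n};L^p,S^p(s^{-kp/n}))$ via Corollary~\ref{C:1} with $\alpha=\frac{kp}{n}$, deduce a comparison of $K$-functionals from the embedding $V^{k,p}(\rn)\to S^p(t^{-kp/n})$, and finish with the estimate $K(f,t^{k/n};L^p(\rn),V^{k,p}(\rn))\ls\omega_k(f,t^{1/n})_p$ from Bennett--Sharpley. However, the one step that carries the real content --- the embedding $V^{k,p}(\rn)\to S^p(t^{-kp/n})$ --- is only asserted in your write-up, and the form in which you assert it is not right. A $(k-1)$-fold iteration of \eqref{E:kolyada-inequality-2} does not produce a pointwise bound of the shape $f^{**}(s)-f^*(s)\ls s^{k/n}|\nabla^{k-1}f|^{**}(\cdot)$: after one application you are left with $|\nabla f|^{**}(t)$, which is not of the form $g^{**}-g^*$, so \eqref{E:kolyada-inequality-2} cannot be applied to it again directly. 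The paper's argument alternates \eqref{E:kolyada-inequality-2} with the representation formula \eqref{E:formula-rearrangement} (this is precisely where the hypothesis \eqref{E:condition-at-infty} enters), uses Fubini's theorem, and proves by induction the integral bound
\begin{equation*}
 f^{**}(t)-f^*(t)\le \frac{n^{\frac32 k-2}}{(k-2)!}\, t^{\frac1n}\int_t^\infty s^{\frac{k-1}{n}-1}|\nabla^k f|^{**}(s)\dd s ,
\end{equation*}
in which the $k$-th (not the $(k-1)$-st) gradient appears and the right-hand side is an integral expression rather than a pointwise product. Passing from this to the norm embedding then requires a weighted Hardy inequality, and it is exactly there --- not in the availability of the $K$-functional formula for $(L^p,V^{k,p})$, as you suggest --- that the upper restriction $k<\frac np+1$ in \eqref{E:conditions-on-k} is used (one needs $1+\frac{(1-k)p}{n}>0$ for the Hardy step). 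Until you supply the induction and the Hardy argument, the chain of inequalities is incomplete; the remaining bookkeeping (the scaling $t^{k/n}=(t^{1/n})^k$ and the harmlessness of the seminorm) is handled correctly.
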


\section{Preliminaries}\label{S:preliminaries}

In this section, we shall collect definitions and other background material and fix notation. 

Let $(\RR,\mu)$ be a~nonatomic $\sigma$-finite measure space with $\mu(\RR)\in(0,\infty]$.
We denote by $\MM(\RR,\mu)$ the set of all $\mu$-measurable functions on $\RR$ whose values lie
in $[-\infty,\infty]$ and by $\MM_+(\RR,\mu)$ the set of all functions in $\MM(\RR,\mu)$ whose values lie in $[0,\infty]$. If there is no risk of confusion, we write only $\MM$ and $\MM_+$, respectively.

For $f\in\MM(\RR,\mu)$, the function $f^*\colon [0,\infty)\to[0,\infty]$, defined by
\begin{equation*}
    f^{\ast}(t)=\inf\{\lambda\geq 0:\mu(\{x\in \RR:\abs{f(x)}>\lambda\})\leq t\}\quad\text{for $t\in[0,\infty)$,}
\end{equation*}
is called the \emph{non-increasing rearrangement} of $f$. The function $f^{**}\colon(0,\infty)\rightarrow[0,\infty]$, defined by
$$
f^{**}(t)=\frac{1}{t}\int_0^t f^*(s)\,d s\quad\text{for $t\in(0,\infty)$},
$$
is called the \emph{maximal non-increasing rearrangement} of $f$. The maximal non-increasing rearrangement is subadditive in the sense that
\begin{equation}
    \label{E:subadditivity-twostar}
    (f+g)^{**}
    \le
    f^{**}+g^{**}
    \quad
    \text{for $f,g\in\MRM$.}
\end{equation}
The operation of taking the non-increasing rearrangement, on the other hand, is not subadditive, but it satisfies merely the estimate 
\begin{equation}
    \label{E:subadditivity-onestar}
    (f+g)^{*}(s+t)
    \le
    f^{*}(s)+g^{*}(t)
    \quad
    \text{for $f,g\in\MRM$ and $s,t\in(0,\infty)$.}
\end{equation}
Two functions $f$ and $g$, notably defined on possibly different measure spaces, are called \emph{equimeasurable}, and denoted $f\sim g$, if $f^*=g^*$ on $(0,\infty)$.
   
 Since our measure space is assumed nonatomic, it is automatically \textit{resonant} in the sense that 
\begin{equation*}
    \int_{0}^{\infty}f^*(t)g^*(t)\,dt
    =
    \sup_{\tilde g\sim g}
    \int_{\mathcal R}|f(x)\tilde g(x)|d\mu(x),
\end{equation*}
in which the supremum is extended over all functions $\tilde g$ equimeasurable with $g$.

For a $\mu$-measurable subset $E$, we shall denote by $\chi_E$ the characteristic function of $E$. 

\begin{definition}
	Let $\lVert \cdot \rVert\colon \mathcal{M}(\mathcal{R}, \mu) \rightarrow [0, \infty]$ be a mapping satisfying $\lVert \, \lvert f \rvert \, \rVert = \lVert f \rVert$ for $f \in \mathcal{M}$. We say that $\lVert \cdot \rVert$ is a \emph{Banach function norm} if its restriction to $\mathcal{M}_+$ satisfies the following axioms:
	\begin{enumerate}[label=\textup{(P\arabic*)}, series=P]
		\item \label{P1} it is a norm, in the sense that it obeys the following three conditions:
		\begin{enumerate}[ref=(\theenumii)]
			\item \label{P1a} it is positively homogeneous, i.e.\ $\forall a \in \mathbb{R} \; \forall f \in \mathcal{M}_+ : \lVert a f \rVert = \lvert a \rvert \lVert f \rVert$,
			\item \label{P1b} it satisfies $\lVert f \rVert = 0 \Leftrightarrow f = 0$  $\mu$-a.e.,
			\item \label{P1c} it is subadditive, i.e.\ $\forall f,g \in \mathcal{M}_+ \: : \: \lVert f+g \rVert \leq \lVert f \rVert + \lVert g \rVert$,
		\end{enumerate}
		\item \label{P2} it has the lattice property, i.e.\ if some $f, g \in \mathcal{M}_+$ satisfy $f \leq g$ $\mu$-a.e., then also $\lVert f \rVert \leq \lVert g \rVert$,
		\item \label{P3} it has the Fatou property, i.e.\ if  some $f_n, f \in \mathcal{M}_+$ satisfy $f_n \uparrow f$ $\mu$-a.e., then also $\lVert f_n \rVert \uparrow \lVert f \rVert $,
		\item \label{P4} $\lVert \chi_E \rVert < \infty$ for $E \subseteq \mathcal{R}$ satisfying $\mu(E) < \infty$,
		\item \label{P5} for every $E \subseteq \mathcal{R}$ satisfying $\mu(E) < \infty$ there exists some positive constant $C_E$, depending only on $E$, such that the inequality $ \int_E f \: d\mu \leq C_E \lVert f \rVert $ is true for $f \in \mathcal{M}_+$.
	\end{enumerate} 
\end{definition}

While the theory of r.i.~spaces is classical and well developed (\cite{Ben:88,Kre:82}), the analogous knowledge about r.i.~quasinorms and corresponding spaces is rather sparse. On the other hand, it is becoming fashionable recently (\cite{Pes:22,Mus:25,Nek:24,Pes:25}). The interest in it stems from the fact that the normed spaces do not hold answers to all challenging contemporary problems. 

\begin{definition}
	Let $\lVert \cdot \rVert\colon \mathcal{M}(\mathcal{R}, \mu) \rightarrow [0, \infty]$ be a mapping satisfying $\lVert \, \lvert f \rvert \, \rVert = \lVert f \rVert$ for $f \in \mathcal{M}$. We say that $\lVert \cdot \rVert$ is a \emph{quasi-Banach function norm} if its restriction to $\mathcal{M}_+$ satisfies the axioms \ref{P2}, \ref{P3} and \ref{P4} of a Banach function norm together with a weaker version of axiom \ref{P1}, namely
	\begin{enumerate}[label=\textup{(Q\arabic*)}]
		\item \label{Q1} it is a quasinorm, in the sense that it satisfies the following three conditions:
		\begin{enumerate}[ref=(\theenumii)]
			\item \label{Q1a} it is positively homogeneous, i.e.\ $\forall a \in \mathbb{R} \; \forall f \in \mathcal{M}_+ : \lVert af \rVert = \lvert a \rvert \lVert f \rVert$,
			\item \label{Q1b} it satisfies  $\lVert f \rVert = 0 \Leftrightarrow f = 0$ $\mu$-a.e.,
			\item \label{Q1c} there is a positive constant $C$ such that
			\begin{equation*}
				\forall f,g \in \mathcal{M}_+ : \lVert f+g \rVert \leq C(\lVert f \rVert + \lVert g \rVert).
			\end{equation*}
		\end{enumerate}
	\end{enumerate}
\end{definition}

A quasi-Banach function norm might, or might not, satisfy (P5). Nontrivial examples of both situations can be obtained by considering the functional
$\|\cdot\|_{L^{1,q}}$ given by
\begin{equation*}
    \|f\|_{L^{1,q}}=\|f^*(t)t^{1-\frac1q}\|_{L^q(0,\infty)}
    \quad\text{for $q\in(0,\infty]$.}
\end{equation*}
Then $\|\cdot\|_{L^{1,q}}$ is a quasinorm for every $q\in(0,\infty]$, it is equivalently normable if and only if $q=1$, and it satisfies  (P5) if and only if $q\in(0,1]$. For further details, see~\cite{Pes:22,Mus:25,Nek:24,Pes:25}.

\begin{definition}
	Let $\lVert \cdot \rVert_X$ be a (quasi-)Banach function norm. We say that $\lVert \cdot \rVert_X$ is \emph{rearrangement\nobreakdash-invariant}, abbreviated \emph{r.i.}, if $\lVert f\rVert_X = \lVert g \rVert_X$ whenever $f, g \in \mathcal{M}$ are equimeasurable.
\end{definition}

\begin{definition}
	Let $\lVert \cdot \rVert_X$ be a (quasi-)Banach function norm. We then define the corresponding \emph{(quasi-)Banach function space} $X$ as the set
	\begin{equation*}
		X = \left \{ f \in \mathcal{M};  \; \lVert f \rVert_X < \infty \right \}.
	\end{equation*}
	
	Furthermore, we will say that $X$ is rearrangement-invariant whenever $\lVert \cdot \rVert_X$ is.
\end{definition}

If
$\lVert \cdot \rVert_X$ is an r.i.~quasi-Banach function norm on $\mathcal{M}(\mathcal{R},\mu)$, then there exists a uniquely determined r.i.~quasi-Banach function norm $\lVert \cdot \rVert_{\overline{X}}$ on $\mathcal{M}([0,\infty), \lambda)$, in which $\lambda$ is the one-dimensional Lebesgue measure, such that, for every $f \in \mathcal{M}(\mathcal{R},\mu)$, one has $\lVert f \rVert_X=\lVert f^* \rVert_{\overline{X}}$. When $\|\cdot\|_X$ is a norm, this is the classical Luxemburg representation theorem (see e.g.~\cite[Chapter~2, Theorem~4.10]{Ben:88}). In the case when $\|\cdot\|_X$ is merely a quasinorm, this is the recent result of \cite[Theorem~3.1]{Mus:25}. 

\begin{definition}
	Assume that $\lVert \cdot \rVert_X$ is an r.i.~quasi-Banach function norm and $X$ is the corresponding r.i.~quasi-Banach function space. We then define the \emph{fundamental function} $\varphi_X$ of $X$ by
	\begin{equation*}
		\varphi_X(t) = \lVert \chi_{E_t} \rVert_{X}
        \quad\text{for $t\in[0,\mu(\RR))$,}
	\end{equation*}
where $E_t$ is any subset of $\mathcal{R}$ with $\mu(E_t) = t$. 
\end{definition}

Note that the rearrangement invariance of $X$ guarantees that its fundamental function is well defined. Moreover, the fact that $\mu$ is nonatomic implies that $\varphi_X$ is defined on the entire interval $t\in[0,\mu(\RR))$.

\begin{remark}
    Observe that if $X$ allows a representation space $\overline{X}$, then
	\begin{equation*}
		\varphi_X(t) = \lVert \chi_{(0,t)} \rVert_{\overline{X}}
        \quad\text{for $t\in[0,\mu(\RR))$.}
	\end{equation*}
    
\end{remark}


\begin{definition}
    \label{D:classical-lorentz-spaces}
    Fix $p\in(0,\infty]$ and a weight $w$ on $(0,\infty)$. We define the functionals
    \begin{align}
        &\|f\|_{\Lambda^p(w)}
        =
        \begin{cases}
            \|f^*w^\frac{1}{p}\|_{L^p(0,\infty)}
                &\text{if $p\in(0,\infty)$}
                    \\
            \sup\limits_{t\in(0,\infty)}f^*(t)w(t)
                &\text{if $p=\infty$,}
        \end{cases}
            \\
        &\|f\|_{\Gamma^p(w)}
        =
        \begin{cases}
            \|f^{**}w^\frac{1}{p}\|_{L^p(0,\infty)}
                &\text{if $p\in(0,\infty)$}
                    \\
            \sup\limits_{t\in(0,\infty)}f^{**}(t)w(t)
                &\text{if $p=\infty$,}
        \end{cases}
            \intertext{and}
        &\|f\|_{S^p(w)}
        =
        \begin{cases}
        \|(f^{**}-f^*)w^\frac{1}{p}\|_{L^p(0,\infty)}
                &\text{if $p\in(0,\infty)$}
                    \\
            \sup\limits_{t\in(0,\infty)}(f^{**}(t)-f^*(t))w(t)
                &\text{if $p=\infty$}
        \end{cases}
    \end{align}
    for every $f\in\MM(\RR,\mu)$, in which $\|\cdot\|_{L^p(0,\infty)}$ is the usual Lebesgue (quasi-)norm on $(0,\infty)$. Then the \emphdef{classical Lorentz spaces} $\Lambda^p(w)$ and $\Gamma^p(w)$ are defined as the sets of all functions $f\in\MM(\RR,\mu)$ for which the respective functionals are finite. We define the space $S^p(w)$ as the collection of all measurable functions $f$ having $\|f\|_{S^p(w)}<\infty$ and satisfying $\lim_{t\to\infty}f^*(t)=0$. 
\end{definition}

\begin{remark}
    The space $S^p(w)$ is defined slightly differently than its companions of types $\Lambda$ and $\Gamma$ because the functional $f^{**}-f^*$ vanishes on constant functions, which obviously calls for some normalization. Unlike the spaces of the other two types, spaces of type $S$ do not measure the \emph{size} of a function, but rather its \emph{smoothness} through oscillation. 
\end{remark}

\begin{remark}
    We note that classical Lorentz spaces of all types have their natural representation spaces, regardless whether they are r.i.~quasi-Banach function spaces or not, due to the very nature of their governing functionals. More precisely, we have (appealing once again to Convention~\ref{CON:representation})
    \begin{equation}
        \label{E:representation-classical}
        \overline
        {\Lambda^{p}(w)}
        =
        {\Lambda^{p}(w)},
        \quad
        \overline
        {\Gamma^{p}(w)}
        =
        {\Gamma^{p}(w)},
        \quad
        \overline
        {S^{p}(w)}
        =
        {S^{p}(w)}.
    \end{equation}
    We will be using these relations without further warning.
\end{remark}

In the proofs of the main results and, in particular, in the applications, we will need some functional properties of spaces of type $\Lambda$ which we will recall now. In~\cite[Theorem~1.1]{Spa:07} 
(for other variants see also~\cite[Corollary~2.2]{Car:93}, \cite[page 6]{Kam:04} or~\cite{Cwi:04}) it was observed that the functional $\|\cdot\|_{\Lambda^p(w)}$ for $p\in(0,\infty)$ is a quasinorm if and only if the function $W$ (the primitive function of $w$) satisfies the $\Delta_2$-condition, i.e., there is a positive constant $C$ such that
\begin{equation}
    \label{E:Delta-2-for-W}
    W(2t)\le CW(t)\quad\text{for $t\in(0,\infty)$.}
\end{equation}
Under the assumption~\eqref{E:Delta-2-for-W}, the space $\Lambda^p(w)$ is, for any $p\in(0,\infty)$, a~quasi-Banach space. In particular, it is complete and has the Fatou property. If, moreover, $w$ is integrable near zero, then the functional $\|\cdot\|_{\Lambda^p(w)}$ satisfies the axiom (P4), resulting in the space $\Lambda^p(w)$ being an~r.i.~quasi-Banach function space.
Furthermore, thanks to the recent advances of~\cite[Theorem~3.23]{Nek:24}, for every fixed $a\in(0,\infty)$, the \textit{dilation operator} $D_a$, defined by 
\begin{equation*}
    D_ag(t)=g(at)
    \quad\text{for $g\in\M(0,\infty)$ and $t\in(0,\infty)$,}
\end{equation*}
is bounded on the representation space of $\Lambda^{p}(w)$ in the sense that there exists a positive constant $C$ such that, retiring to Convention~\ref{CON:representation},
\begin{equation}
    \label{E:dilation}
    \|g(at)\|_{{\Lambda^{p}(w)}}
    \le C
    \|g(t)\|_{{\Lambda^{p}(w)}}
    \quad\text{for $g\in\M(0,\infty)$,}
\end{equation}
in which $C$ may depend on $p,w,a$, but not $g$.

We will finish this section by recalling the crucial notion of the $K$-functional and its important modification.

\begin{definition}
    \label{D:K-functional}
    Let $(X_0,X_1)$ be a pair of quasi-Banach function spaces over a common measure space $\RM$. Let $X_0+X_1$ denote the sum of spaces consisting of all functions $f\in \M\RM$ such that there exists a decomposition $f=f_0+f_1$ with $f_i\in X_i$, $i\in\{0,1\}$. Then, for every $f\in X_0+X_1$ and every $t\in[0,\infty)$, we define the \emph{$K$-functional} by
    \begin{equation}
        \label{E:K-functional}
        K(f,t;X_0,X_1) = \inf \left\{ \|f_0\|_{X_0}+t\|f_1\|_{X_1}: f=f_0+f_1,\ f_i\in X_i,\
        i\in\{0,1\}
        \right\}.
    \end{equation}
\end{definition}

\begin{remark}
    Given $p_0,p_1\in(0,\infty)$ and a couple of weights $w_0,w_1$, we define the $K$-functional for the pair $(S^{p_0}(w_0),S^{p_1}(w_1))$, and, indeed, also of the pair $(\Lambda^{p_0}(w_0),\Lambda^{p_1}(w_1))$, over a common measure space, in the same way as in Definition~\ref{D:K-functional}, without a-priori knowledge whether either of the corresponding functionals is a quasinorm.
\end{remark}

We will also need the restricted version of the $K$-functional, defined for non-increasing functions (for further details, see~\cite{Cer:96}). 
For a quasi-Banach function space $X$ over the measure space $[0,\infty)$ endowed with the one-dimensional Lebesgue measure, we denote by $X^d$ the cone of all non-increasing functions contained in $X$.

\begin{definition}
    Let $(X_0,X_1)$ be a pair of quasi-Banach function spaces over the measure space $[0,\infty)$ endowed with the one-dimensional Lebesgue measure. For every $f\in X_0^d+X_1^d$ and every $t\in[0,\infty)$, we define the \emph{$K^d$-functional} by
    \begin{align}
        \label{E:K-d-functional}
        K^d(f,t;X_0,X_1)
        &=K(f,t;X_0^d,X_1^d) 
            \\
        &= \inf \left\{ \|f_0\|_{X_0}+t\|f_1\|_{X_1}: f=f_0+f_1,\ f_i\in X_i^d,\ i\in\{0,1\}
        \right\}
        .\nonumber
    \end{align}
\end{definition}
Observe that one always has
\begin{equation}
    \label{E:inequality-of-k-functionals}
    K(f,t;X_0,X_1)\le K^d(f,t;X_0,X_1)
\end{equation}
for every admissible $f$ and $t$, since the functional on the right extends the infimum over a smaller set than that on the left side. 
On the other hand, the question whether or not the converse inequality to~\eqref{E:inequality-of-k-functionals} holds is highly nontrivial and of great interest. In this direction, a very useful tool is the \emph{decomposition lemma} (\cite[Lemma~1]{Cer:96}) which states that if three right continuous non-increasing functions $f,g,h$ satisfy $f\le g+h$, then there are non-increasing functions $f_0,f_1$ such that $f_0\le g$, $f_1\le h$ and $f=f_0+f_1$.

\section{Proofs}

We begin by recalling a useful elementary relation which establishes a bridge between the $\Lambda$ and the $S$ worlds, and which will come to play several times in our proofs. A part of it, without proof, was mentioned in the course of the proof of~\cite[Theorem~3.3]{Car:05}. For the sake of self-containment, we include its simple verification.

\begin{lemma}\label{L:SandTLambda}
Let $p\in(0,\infty)$ and let $w$ be a weight on $(0,\infty)$. Then, with $T$ from~\eqref{E:T} and $\widetilde w$ from~\eqref{E:tilde-w}, we have, for every $f\in S^{p}(w)$ and every $t\in(0,\infty)$,
\begin{align}
    \label{E:prequel-S-Lambda}
    \left(\int_0^t\left(f^{**}(s)-f^*(s)\right)^pw(s)\dd s\right)^{\frac1p}
    =
    \left(
    \int_{\frac{1}{t}}^\infty Tf^*(s)^p
    \widetilde w(s)\dd s\right)^{\frac1p}
    \intertext{and}
    \label{E:prequel-S-Lambda-dual}
    \left(\int_t^\infty\left(f^{**}(s)-f^*(s)\right)^pw(s)\dd s\right)^{\frac1p}
    =
    \left(
    \int_0^{\frac{1}{t}}Tf^*(s)^p
    \widetilde w(s)
    \dd s\right)^{\frac1p}. 
\end{align}
In particular, 
    \begin{equation}
        \label{E:S-Lambda}
        \|f\|_{S^{p}(w)}
        =
        \|Tf^*\|_{\Lambda^{p}(\tilde w)}.
    \end{equation}
\end{lemma}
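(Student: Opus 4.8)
The plan is to derive both displayed identities, and hence~\eqref{E:S-Lambda}, from the single pointwise formula~\eqref{E:T-star}, namely $Tf^*(t)=\tfrac1t\bigl(f^{**}(\tfrac1t)-f^*(\tfrac1t)\bigr)$, combined with the substitution $s\mapsto\tfrac1s$ in the integrals on the right-hand sides. First I would record that, since $f\in S^p(w)$ entails $f^*(\infty)=0$, we have $f^*\in A$, so~\eqref{E:T-star} applies and, by the remarks following~\eqref{E:T}, $Tf^*$ is nonnegative and non-increasing on $(0,\infty)$; in particular $(Tf^*)^*=Tf^*$, which is what lets us later identify the right-hand side of~\eqref{E:prequel-S-Lambda} taken over all of $(0,\infty)$ with the $\Lambda^p(\widetilde w)$-functional. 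All the equalities below are understood in $[0,\infty]$, with no finiteness assumed a priori.

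For~\eqref{E:prequel-S-Lambda}: in the integral $\int_{1/t}^\infty Tf^*(s)^p\widetilde w(s)\dd s$ I would substitute $s=\tfrac1r$, so that $\dd s=\tfrac{\dd r}{r^2}$ and the limits $s=\tfrac1t,\infty$ become $r=t,0$. By~\eqref{E:T-star} applied at the point $\tfrac1r$ one has $Tf^*(\tfrac1r)=r\bigl(f^{**}(r)-f^*(r)\bigr)$, while by~\eqref{E:tilde-w} one has $\widetilde w(\tfrac1r)=r^{2-p}w(r)$. Substituting, the powers of $r$ combine as $r^p\cdot r^{2-p}\cdot r^{-2}=1$ and cancel completely, leaving precisely $\int_0^t\bigl(f^{**}(r)-f^*(r)\bigr)^pw(r)\dd r$; taking $p$-th roots gives~\eqref{E:prequel-S-Lambda}. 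The second identity~\eqref{E:prequel-S-Lambda-dual} follows from the very same substitution applied to $\int_0^{1/t}Tf^*(s)^p\widetilde w(s)\dd s$, where now the limits $s=0,\tfrac1t$ become $r=\infty,t$.

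Finally, for~\eqref{E:S-Lambda} I would let $t\to0^+$ in~\eqref{E:prequel-S-Lambda-dual} (monotone convergence), obtaining $\int_0^\infty\bigl(f^{**}-f^*\bigr)^pw=\int_0^\infty Tf^*(s)^p\widetilde w(s)\dd s$; the left-hand side raised to the power $\tfrac1p$ is $\|f\|_{S^p(w)}$ by definition, and the right-hand side equals $\|(Tf^*)^*\widetilde w^{1/p}\|_{L^p(0,\infty)}^p=\|Tf^*\|_{\Lambda^p(\widetilde w)}^p$ because $(Tf^*)^*=Tf^*$. There is no genuine obstacle here: the only two points deserving a word of care are the membership $f^*\in A$ (so that $T$ is defined and $Tf^*$ is its own rearrangement) and the bookkeeping of the powers of $r$ in the change of variables, which is exactly what makes the relation an identity rather than merely an equivalence.
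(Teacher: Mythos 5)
Your proposal is correct and follows essentially the same route as the paper: the change of variables $s\mapsto\frac1s$ combined with the identity~\eqref{E:T-star} (equivalently, $Tf^*(\tfrac1r)=r(f^{**}(r)-f^*(r))$) and the observation that $Tf^*$ is non-increasing, so that $(Tf^*)^*=Tf^*$. The only cosmetic differences are the direction in which you run the substitution and that you obtain~\eqref{E:S-Lambda} by letting $t\to0^+$ in~\eqref{E:prequel-S-Lambda-dual} rather than $t\to\infty$ in~\eqref{E:prequel-S-Lambda}, which is immaterial.
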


\begin{proof}
    Fix $t\in(0,\infty)$. Changing variables $s\mapsto\frac{1}{s}$ and using definitions of $T$ and $\tilde w$, we arrive at
    \begin{align*}
    		\left(\int_0^t\left(f^{**}(s)-f^*(s)\right)^pw(s)\dd s\right)^{\frac1p}
            &=
    		\left(\int_{\frac{1}{t}}^\infty\left(f^{**}(\tfrac1s)-f^*(\tfrac1s)\right)^pw(\tfrac1s)\frac{\dd s}{s^2}\right)^{\frac1p}
                \\
    		&=
    		\left(\int_{\frac{1}{t}}^\infty\left(sTf^*(s)\right)^pw(\tfrac1s)\frac{\dd s}{s^2}\right)^{\frac1p}
                \\
    		&=
    		\left(
            \int_{\frac{1}{t}}^\infty
            Tf^*(s)^p\widetilde w(s)\dd s\right)^{\frac1p},
    \end{align*}
    since $Tf^*$ is non-increasing (cf.~\eqref{E:T-star-alternative}). This establishes~\eqref{E:prequel-S-Lambda}. The relation~\eqref{E:prequel-S-Lambda-dual} can be proved analogously. Finally,~\eqref{E:S-Lambda} follows on letting $t\to\infty$ in~\eqref{E:prequel-S-Lambda}.
\end{proof}

Let us still recall that, for an integrable function $f\colon \rn\to\R$ and $0<t<s<\infty$, integration by parts yields
    \begin{equation}
        \label{E:formula-by-parts}
        g^{**}(t)-g^{**}(s)
        =
        \int_{t}^{s} 
        \frac{g^{**}(\tau)-g^*(\tau)}{\tau}\dd \tau
    \end{equation}
(see~\cite[Chapter~5, Formula~(7.11), page~379]{Ben:88}). If, moreover $\lim_{s\to\infty}g^{**}(s)=0$, then, on taking $s\to\infty$ in~\eqref{E:formula-by-parts}, one gets
(\cite[Chapter~5, Formula~(7.29), page~384]{Ben:88} or~\cite[Formula~(2.7)]{Kol:07})
    \begin{equation}
        \label{E:formula-rearrangement}
        g^{**}(t)
        =
        \int_{t}^{\infty} 
        \frac{g^{**}(s)-g^*(s)}{s}\dd s
        \quad\text{for $t\in(0,\infty)$.}
    \end{equation}
    We shall use these observations in the forthcoming proof.

\begin{proof}[Proof of Theorem~\ref{T:1.1}]
    Consider a function
   $f\in S^{p_0}(w_0)+S^{p_1}(w_1)$. Then, arguing analogously to the proof of~\cite[Proposition~3.1]{Cer:03} and employing the measure preserving
    transformation, whose existence is guaranteed  by Ryff’s theorem (\cite[Chapter~2, Theorem~7.5 and Corollary~7.6]{Ben:88}), we get that 
    \begin{equation*}
        f^*\in S^{p_0}(w_0)^d+S^{p_1}(w_1)^d
    \end{equation*}
    (note that here Convention~\ref{CON:representation} applies) and, by~\eqref{E:inequality-of-k-functionals},
   \begin{align}
        \label{E:K-less-than-Kd}
       K(f,t;S^{p_0}(w_0),S^{p_1}(w_1))
       \le
        K^d(f^*,t;S^{p_0}(w_0),S^{p_1}(w_1)).
   \end{align}
   Observe that, for every pair $f_0$, $f_1$ of nonnegative non-increasing functions on $(0,\infty)$ such that $f^*=f_0+f_1$, the function $Tf^*$ is non-increasing  (cf.~\eqref{E:T-star-alternative}), and one has 
	\begin{equation}
		Tf^*(t)
        =Tf_0(t)
        +Tf_1(t).
	\end{equation}
   Therefore, using Lemma~\ref{L:SandTLambda},
    we obtain
	\begin{align}
		&K^d(f^*,t;S^{p_0}(w_0),S^{p_1}(w_1))
        \label{E:K-inequality-first}
            \\
		&\qquad=
		\inf\left\{\|f_0\|_{S^{p_0}(w_0)}+t\|f_1\|_{S^{p_1}(w_1)}; f^*=f_0+f_1, f_i\in S^{p_i}(w_i)^d, i\in\{0,1\}\right\}     \nonumber\\
		&\qquad=
		\inf\left\{\|Tf_0\|_{\Lambda^{p_0}(\widetilde{w_0})}+t\|Tf_1\|_{\Lambda^{p_1}(\widetilde{w_1})}; f^*=
        f_0+f_1,f_i\in S^{p_i}(w_i)^d, i\in\{0,1\} \right\}
            \nonumber\\
		&\qquad\geq
		\inf\left\{\|g_0\|_{\Lambda^{p_0}(\widetilde{w_0})}+t\|g_1\|_{\Lambda^{p_1}(\widetilde{w_1})}; Tf^*=g_0+g_1, g_i\in \Lambda^{p_i}(\widetilde{w_i})^d\right\}
            \nonumber\\
		&\qquad=
		K^d(Tf^*,t;\Lambda^{p_0}(\widetilde{w_0}),\Lambda^{p_1}(\widetilde{w_1})).
        \nonumber
	\end{align}		
    We shall now show a converse inequality to~\eqref{E:K-inequality-first}. First note that it follows from~\eqref{E:K-inequality-first} that $Tf^*$ is a non-increasing element of $\Lambda^{p_0}(\widetilde{w_0})+\Lambda^{p_1}(\widetilde{w_1})$. Let $g_0\in \Lambda^{p_0}(\widetilde{w_0}) and g_1\in \Lambda^{p_1}(\widetilde{w_1})$ be non-increasing functions such that $Tf^*=g_0+g_1$. Set $h_0=Tg_0$ and $h_1=Tg_1$. Then $h_i\in S^{p_i}(w_i)^d$ for $i\in\{0,1\}$, in particular, $h_0$ and $h_1$ are non-increasing, and, applying \eqref{E:T-idempotent}, we have
	\begin{equation*}
		Th_0+Th_1
        =TTg_0+TTg_1
        =g_0+g_1=Tf^*.
	\end{equation*}
    Therefore, by Lemma~\ref{L:SandTLambda},
	\begin{align}
		&\|g_0\|_{\Lambda^{p_0}(\widetilde{w_0})}+t\|g_1\|_{\Lambda^{p_1}(\widetilde{w_1})}\label{E:lower}
            \\
		&\qquad=
		\|Th_0\|_{\Lambda^{p_0}(\widetilde{w_0})}+t\|Th_1\|_{\Lambda^{p_1}(\widetilde{w_1})}
		=
		\|h_0\|_{S^{p_0}(w_0)}+t\|h_1\|_{S^{p_1}(w_1)}\nonumber
            \\
		&\qquad\geq
		\inf\left\{\|f_0\|_{S^{p_0}(w_0)}+t\|f_1\|_{S^{p_1}(w_1)}; Tf^*=Tf_0+Tf_1, f_i\in S^{p_i}(w_i)^d, i\in\{0,1\}\right\}\nonumber
            \\
		&\qquad=
		\inf\left\{\|f_0\|_{S^{p_0}(w_0)}+t\|f_1\|_{S^{p_1}(w_1)}; f^*=f_0+f_1, f_i\in S^{p_i}(w_i)^d, i\in\{0,1\}\right\} \nonumber
            \\
        &\qquad
        =K^d(f^*,t;S^{p_0}(w_0),S^{p_1}(w_1)).\nonumber
	\end{align}
    As $g_0\in \Lambda^{p_0}(\widetilde{w_0}), g_1\in \Lambda^{p_1}(\widetilde{w_1})$ were chosen arbitrarily, taking the infimum over all such decompositions on the left-hand side of~\eqref{E:lower} yields
	\begin{align}
    \label{E:K-inequality-second}
		K^d(Tf^*,t;\Lambda^{p_0}(\widetilde{w_0}),\Lambda^{p_1}(\widetilde{w_1}))
		\geq
		K^d(f^*,t;S^{p_0}(w_0),S^{p_1}(w_1)).
	\end{align}
    Combining~\eqref{E:K-inequality-first} and~\eqref{E:K-inequality-second}, we arrive at
	\begin{equation}
    \label{E:Kd=Kd}
		K^d(f^*,t;
        S^{p_0}(w_0),S^{p_1}(w_1))
		=
		K^d(Tf^*,t;\Lambda^{p_0}(\widetilde{w_0}),\Lambda^{p_1}(\widetilde{w_1})).
	\end{equation}
    By \eqref{E:cond1}, we have that
	\begin{equation*}
        \left(\int_{t}^{\infty}s^{-p_i}w_i(s)\dd s
        \right)^{\frac{1}{p_i}}
        \le C
        \left(\int_{2t}^{\infty}s^{-p_i}w_i(s)\dd s
        \right)^{\frac{1}{p_i}}
        \quad\text{for $i\in\{0,1\}$ and $t\in(0,\infty)$.}
    \end{equation*}
    Changing variables $t\mapsto \frac{1}{t}$, we obtain
		\begin{equation*}
        \left(\int_{0}^{1/t}\widetilde{w_i}(s)\dd s
        \right)^{\frac{1}{p_i}}
        \le C
        \left(\int_{0}^{1/2t}\widetilde{w_i}(s)\dd s
        \right)^{\frac{1}{p_i}}
    \quad\text{for $i\in\{0,1\}$ and $t\in(0,\infty)$,}    
    \end{equation*}
    and thus	
    \begin{equation}
    \label{E:wtilde2}
        \left(\int_{0}^{2t}\widetilde{w_i}(s)\dd s
        \right)^{\frac{1}{p_i}}
        \le C
        \left(\int_{0}^{t}\widetilde{w_i}(s)\dd s
        \right)^{\frac{1}{p_i}}
        \quad\text{for $i\in\{0,1\}$ and $t\in(0,\infty)$.}
	\end{equation}
    Owing to~\cite[Corollary~2.2]{Car:93}, the inequality~\eqref{E:wtilde2} guarantees that  $\Lambda^{p_0}(\widetilde{w_0})$ and $\Lambda^{p_1}(\widetilde{w_1})$ are~quasi-Banach spaces. Thanks to this fact, we can employ the relation proved at the end of the proof of~\cite[Proposition~3.1]{Cer:03} (note that their assumption~\cite[Inequality~(2.2)]{Cer:03}  is our~\eqref{E:wtilde2}), and obtain thereby
    \begin{equation}
    \label{E:Kd=K}
		K^d(Tf^*,t;\Lambda^{p_0}(\widetilde{w_0}),\Lambda^{p_1}(\widetilde{w_1}))
		\approx
		K(Tf^*,t;\Lambda^{p_0}(\widetilde{w_0}),\Lambda^{p_1}(\widetilde{w_1})).
	\end{equation}
    Combining 
    \eqref{E:K-less-than-Kd}, \eqref{E:Kd=Kd}, and \eqref{E:Kd=K}, we get
\begin{equation}\label{E:K<K}
		K(f,t;S^{p_0}(w_0),S^{p_1}(w_1))
		\lesssim
		K(Tf^*,t;\Lambda^{p_0}(\widetilde{w_0}),\Lambda^{p_1}(\widetilde{w_1})).
	\end{equation}
		
    To establish the inequality converse to~\eqref{E:K<K}, fix $f$ in $S^{p_0}(w_0)+S^{p_1}(w_1)$ and let $f_0\in S^{p_0}(w_0)$, $f_1\in S^{p_1}(w_1)$ be such that $f=f_0+f_1$.
    Then, by the subadditivity of the maximal non-increasing rearrangement~\eqref{E:subadditivity-twostar}, and the  estimate $f_1^*(2t)\leq f^*(t)+f_0^*(t)$, which follows from~\eqref{E:subadditivity-onestar} as a special case with $s=t$, we have that
	\begin{align*}
		&f^{**}(t)-f^*(t)
		\leq
		f_0^{**}(t)+f_1^{**}(t)-f_1^*(2t)+f_0^*(t)
		\leq
		2f_0^{**}(t)+f_1^{**}(t)-f_1^*(2t)
            \\
		&\qquad=
		2f_0^{**}(t)+\frac1t\int_0^{2t}f_1^*(s)\dd s-\frac1t\int_t^{2t}f_1^*(s)\dd s-f_1^*(2t)
        \quad\text{for $t\in(0,\infty)$.}
    \end{align*}
    Since, by monotonicity, 
    \begin{equation*}
        f_1^*(2t)
        \le\frac{1}{t}
        \int_{t}^{2t}f_1^*(s)\,ds,
    \end{equation*}
    this implies
    \begin{align*}
		&f^{**}(t)-f^*(t)
		\leq    
        2f_0^{**}(t)+2(f_1^{**}(2t)-f_1^*(2t))
        \quad\text{for $t\in(0,\infty)$.}
	\end{align*}
    Applying this to $\frac1t$ in place of $t$ and using~\eqref{E:T-star}, we get
	\begin{equation*}
		tTf^*(t)
		\leq
		2f_0^{**}(\tfrac1t)+2(f_1^{**}(\tfrac2t)-f_1^*(\tfrac2t))
		=
		2f_0^{**}(\tfrac1t)+tTf_1^*(\tfrac{t}2)
        \quad\text{for $t\in(0,\infty)$.}
	\end{equation*}
    Dividing this by $t$, we obtain the relation
	\begin{equation}
        \label{E:upper-for-Tf-star}
		Tf^*(t)
		\leq
		\tfrac2tf_0^{**}(\tfrac1t)+Tf_1^*(\tfrac{t}2)
        \quad\text{for $t\in(0,\infty)$.}
	\end{equation}
    We claim that
    \begin{equation}
    \label{E:doublingT}
		\|Tf_1^*(\tfrac{t}2)\|_{\Lambda^{p_1}(\widetilde{w_1})}
		\lesssim		
		\|Tf_1^*(t)\|_{\Lambda^{p_1}(\widetilde{w_1})}.
	\end{equation}
    To prove~\eqref{E:doublingT}, we first note that it is equivalent to the inequality
	\begin{equation*}
		\int_0^{\infty}Tf_1^*(s)^{p_1}\widetilde{w_1}(2s)\dd s
		\lesssim		
		\int_0^{\infty}Tf_1^*(s)^{p_1}\widetilde{w_1}(s)\dd s,
	\end{equation*}
    which, since $Tf_1^*$ is non-increasing, would in turn follow from the embedding
    \begin{equation}
        \label{E:lambda-embedding}
        \Lambda^{p_1}(\widetilde{w_1}(s))
        \hookrightarrow
        \Lambda^{p_1}(\widetilde{w_1}(2s))
    \end{equation}
    if the embedding was true. However, in \cite[Proposition 1(a)]{Ste:23}, it is shown that the embedding~\eqref{E:lambda-embedding} holds if and only if
	\begin{equation*}
    \int_0^{t}\widetilde{w_1}(2s)\dd s
		\ls	
		\int_0^{t}\widetilde{w_1}(s)\dd s
        \quad\text{for $t\in(0,\infty)$,}
	\end{equation*}
    which, however, is \eqref{E:wtilde2} in disguise (as is readily verified by changing variables). This establishes~\eqref{E:doublingT}.
		
    By the decomposition lemma and~\eqref{E:upper-for-Tf-star}, we find non-increasing functions $g_0, g_1$ such that 
    \begin{equation}
        \label{E:split-of-T}
        Tf^*(t)
        =g_0(t)+g_1(t)
    \end{equation}
    and 
    \begin{equation}
        \label{E:decomposition}
        g_0(t)\leq \tfrac2tf_0^{**}(\tfrac1t),\quad g_1(t)\leq Tf_1^*(\tfrac{t}2)
        \quad\text{for $t\in(0,\infty)$.}
    \end{equation}
    Applying $T$ to each side of~\eqref{E:split-of-T}, using its linearity and~\eqref{E:T-idempotent}, we get 	
    \begin{equation*}
		f^*(t)
        =Tg_0(t)+Tg_1(t)
        \quad\text{for $t\in(0,\infty)$.}
	\end{equation*}
    Consequently, using~\eqref{E:split-of-T},
    \eqref{E:decomposition},
    \eqref{E:doublingT} and
    \eqref{E:S-Lambda}, we infer that
	\begin{align}
        \label{E:K-by-gamma}
		K(Tf^*,t;\Lambda^{p_0}(\widetilde{w_0}),\Lambda^{p_1}(\widetilde{w_1}))
		&\leq
		\|g_0\|_{\Lambda^{p_0}(\widetilde{w_0})}+t\|g_1\|_{\Lambda^{p_1}(\widetilde{w_1})}
            \\
		&\leq
		\|\tfrac2tf_0^{**}(\tfrac1t)\|_{\Lambda^{p_0}(\widetilde{w_0})}+t\|Tf_1^*(\tfrac{t}2)\|_{\Lambda^{p_1}(\widetilde{w_1})}
            \nonumber\\
		&\lesssim
		2\|f_0^{**}\|_{\Lambda^{p_0}(w_0)}+t\|Tf_1^*\|_{\Lambda^{p_1}(\widetilde{w_1})}
            \nonumber\\
		&=
		2\|f_0\|_{\Gamma^{p_0}(w_0)}+t\|f_1\|_{S^{p_1}(w_1)}
        \quad\text{for $t\in(0,\infty)$.}\nonumber
	\end{align}
    
    We next claim that~\eqref{E:cond2} implies
    \begin{equation}
        \label{E:gamma-s}
        \Gamma^{p_0}(w_0)=S^{p_0}(w_0).
    \end{equation}
    For $p_0\in[1,\infty)$, this follows from~\cite[Corollary~4.3]{Car:08} (for $p_0\in(1,\infty)$ also from~\cite[Theorem~3.3]{Car:05}). Both these sources use duality methods and hence do not allow extension to the full range of $p_0\in(0,\infty)$. We shall give a proof for $p_0\in(0,1)$.
    
    We first note that~\eqref{E:gamma-s} is equivalent to saying that there exists a positive constant $C$ such that
    \begin{equation}
        \label{E:gamma-s-1}
        \int_{0}^{\infty}
        f^{**}(t)^{p_0}
        w_0(t)\,dt
        \le C
        \int_{0}^{\infty}
        \left(f^{**}(t)-f^*(t)\right)^{p_0}
        w_0(t)\,dt
        \quad\text{for $f\in\mathcal M(\mathcal R,\mu)$,}
    \end{equation}
    owing to the fact that the converse inequality, with $C=1$, is obvious. Using  \eqref{E:formula-rearrangement} and~\cite[Corollary~5.3]{Car:08}, we get 
    \begin{align*}
    f^{**}(t)^{p_0}
        &=  
        \left(\int_t^ \infty \frac{f^{**}(s)-f^{*}(s)}{s}ds\right)^{p_0}
        \lesssim
        \int_t^ \infty \frac{(f^{**}(s)-f^{*}(s))^{p_0}}{s}ds
        \quad\text{for $t\in(0,\infty)$.}
   \end{align*}
    Integrating with respect to $w_0(t)\,dt$ and using Fubini's theorem, we have
 \begin{align*}
        \int_{0}^{\infty}
        f^{**}(t)^{p_0}
        w_0(t)\,dt
               &\lesssim \int_{0}^{\infty}
        \int_t^ \infty \frac{(f^{**}(s)-f^{*}(s))^{p_0}}{s}ds\,
        w_0(t)\,dt\\
         &= \int_{0}^{\infty}
         \frac{(f^{**}(s)-f^{*}(s))^{p_0}}{s}\int_0^s  w_0(t)\,dt\, ds.
    \end{align*}
Now, applying~\cite[Corollary~5.3]{Car:08} once again, we obtain that the inequality
    \begin{align*}
        & \int_{0}^{\infty}
         \frac{(f^{**}(s)-f^{*}(s))^{p_0}}{s}\int_0^s  w_0(t)\,dt ds  \le C
        \int_{0}^{\infty}
        \left(f^{**}(t)-f^*(t)\right)^{p_0}
        w_0(t)\,dt
    \end{align*}
    holds with $C$ given by
    \begin{equation*}
        C\approx
        \sup_{t\in(0,\infty)}
        \frac
        {\int_{t}^{\infty}\tau^{-p_0-1}\int_0^{\tau}  w_0(s)\, ds\,d\tau}
        {\int_{t}^{\infty}\tau^{-p_0}  w_0(\tau)\,d\tau}
    \end{equation*}
   as long as this expression is finite. However, using Fubini's theorem, one can easily verify that 
     \begin{equation*}
        C\lesssim 1+
        \sup_{t\in(0,\infty)}
        \frac
        {t^{-p_0}\int_{0}^{t}w_0(s)\, ds}
        {\int_{t}^{\infty}\tau^{-p_0}  w_0(\tau)\,d\tau},
    \end{equation*}
    which is finite owing to~\eqref{E:cond2}. This establishes~\eqref{E:gamma-s-1}, and hence, in turn,~\eqref{E:gamma-s}, for $p_0\in(0,1)$. 
    
    For the sake of completeness, let us mention that an alternative proof of~\eqref{E:gamma-s} for $p_0\in(0,1)$ can be tailored via an approximation argument in which we fix a function $f$ and then we effectively replace $f^*$ by a sequence of functions of the form $\int_t^{\infty}h(s)\,ds$ which converge to $f^*$ monotonically from below, and applying the monotone convergence theorem, owing to a recent result from~\cite[Theorem 2.1, Part (ii)]{Gog:22}. This approach gives works equally fine, albeit through a more lengthy argument. We omit the details.

    Now, combination of~\eqref{E:K-by-gamma} with~\eqref{E:gamma-s} yields
	\begin{equation*}
		K(Tf^*,t;\Lambda^{p_0}(\widetilde{w_0}),\Lambda^{p_1}(\widetilde{w_1}))
		\lesssim
		2\|f_0\|_{S^{p_0}(w_0)}+t\|f_1\|_{S^{p_1}(w_1)}
        \quad\text{for $t\in(0,\infty)$.}
	\end{equation*}
    As functions $f_0$ and $f_1$ were taken arbitrary, we get	
    \begin{equation}
    \label{E:K>K}
		K(Tf^*,t;\Lambda^{p_0}(\widetilde{w_0}),\Lambda^{p_1}(\widetilde{w_1}))
		\lesssim
		K(f,t;S^{p_0}(w_0),S^{p_1}(w_1))
        \quad\text{for $t\in(0,\infty)$.}
	\end{equation}		
    Finally, 
    \eqref{E:K<K} and \eqref{E:K>K} give
	\begin{equation}
    \label{E:K-equiv-K}
		K(f,t;S^{p_0}(w_0),S^{p_1}(w_1))
        \approx
        K(Tf^*,t;\Lambda^{p_0}(\widetilde{w_0}),\Lambda^{p_1}(\widetilde{w_1}))
        \quad\text{for $t\in(0,\infty)$,}
	\end{equation}
    as desired.
\end{proof}

\begin{proof}[Proof of Theorem~\ref{T:GeneralK}]
Fix $t\in(0,\infty)$ and a function $f\in A_0+A_1$, and suppose that $f_0\in A_0$ and $f_1\in A_1$ are such that $f=f_0+f_1$. By~\eqref{E:subadditivity-onestar}, $f^*(s+t)\leq f_0^*(s)+f_1^*(t)$ for every $s,t\in (0,\infty)$. Thus, 
    \begin{align*}
        &\left\|\chi_{(0,t)}f^*\right\|_{\overline{A}_0}
        +
        \sigma(t)\left\|\chi_{(t,\infty)}f^*\right\|_{\overline{A}_1}
            \\
        &\qquad\lesssim
        \left\|\chi_{(0,t)}(s)f_0^*(\tfrac{s}{2})\right\|_{\overline{A}_0}
        +
        \left\|\chi_{(0,t)}(s)f_1^*(\tfrac{s}{2})\right\|_{\overline{A}_0}
            \\
        &\qquad \qquad +
        \sigma(t)\left(
        \left\|\chi_{(t,\infty)}(s)f_0^*(\tfrac{s}{2})\right\|_{\overline{A}_1}
        +
        \left\|\chi_{(t,\infty)}(s)f_1^*(\tfrac{s}{2})\right\|_{\overline{A}_1}
        \right).
    \end{align*}
    Applying \eqref{E:GenProp1} to the first summand and \eqref{E:GenProp2} to the last one,
    we get     
    \begin{equation*}
        \left\|\chi_{(0,t)}f^*\right\|_{\overline{A}_0}
        +
        \sigma(t)\left\|\chi_{(t,\infty)}f^*\right\|_{\overline{A}_1}
        \lesssim
        \left\|f_0^*\right\|_{\overline{A}_0}
        +
        \sigma(t)
        \left\|f_1^*\right\|_{\overline{A}_1}.
    \end{equation*}
Taking the infimum over all representations $f=f_0+f_1$, $f_0\in A_0$, $f_1\in A_1$, we arrive at
\begin{equation}
    \label{E:RHS<LHS}
        \left\|\chi_{(0,t)}f^*\right\|_{\overline{A}_0}
        +
        \sigma(t)\left\|\chi_{(t,\infty)}f^*\right\|_{\overline{A}_1}
        \lesssim
        K(f,\sigma(t);A_0,A_1).
    \end{equation}
To prove the converse estimate, we take a particular decomposition of $f$. Fix $t\in (0, \infty)$. We find $f_0$, $f_1$ so that $f=f_0+f_1$ and
    \begin{align*}
        f^*_0(s)&=(f^*(s)-f^*(t))\chi_{(0,t)}(s)\text{ for $s\in (0,\infty)$},\\
        f^*_1(s)&=
            \left\{
            \begin{array}{ll}
            f^*(t) &\text{for $s\in (0,t]$},\\
            f^*(s) &\text{for $s\in (t,\infty).$}
            \end{array} 
            \right. 
    \end{align*}
Then, using subsequently the definition of $f_0$ and $f_1$, the formula for $\sigma$, the definition of $\varphi_0$, the monotonicity of $f^*$, and properties of quasinorms, we obtain
    \begin{align*}
         K(f,\sigma(t);A_0,A_1)
         &\leq
         \left\|f_0^*\right\|_{\overline{A}_0}
        +
        \sigma(t)
        \left\|f_1^*\right\|_{\overline{A}_1}\\
        &\lesssim
        \left\|\chi_{(0,t)}f_0^*\right\|_{\overline{A}_0}
        +
        \left\|\chi_{(t,\infty)}f_0^*\right\|_{\overline{A}_0}
        +
        \sigma(t)\left(
        \left\|\chi_{(0,t)}f_1^*\right\|_{\overline{A}_1}
        +
        \left\|\chi_{(t,\infty)}f_1^*\right\|_{\overline{A}_1}
        \right)\\
        &=
        \left\|\chi_{(0,t)}(s)(f^*(s)-f^*(t))\right\|_{\overline{A}_0}
        +
        \sigma(t)\left(
        f^*(t)\left\|\chi_{(0,t)}\right\|_{\overline{A}_1}
        +
        \left\|\chi_{(t,\infty)}f^*\right\|_{\overline{A}_1}
        \right)\\
        &\leq
        \left\|\chi_{(0,t)}f^*\right\|_{\overline{A}_0}
        +
        \varphi_0(t)f^*(t)
        +\sigma(t)
        \left\|\chi_{(t,\infty)}f^*\right\|_{\overline{A}_1}\\
        &=
        \left\|\chi_{(0,t)}f^*\right\|_{\overline{A}_0}
        +
        \left\|\chi_{(0,t)}\right\|_{\overline{A}_0}f^*(t)
        +\sigma(t)
        \left\|\chi_{(t,\infty)}f^*\right\|_{\overline{A}_1}\\
        &\leq
        \left\|\chi_{(0,t)}f^*\right\|_{\overline{A}_0}
        +
        \left\|\chi_{(0,t)}(s)f^*(s)\right\|_{\overline{A}_0}
        +\sigma(t)
        \left\|\chi_{(t,\infty)}f^*\right\|_{\overline{A}_1}\\
        &\lesssim
        \left\|\chi_{(0,t)}f^*\right\|_{\overline{A}_0}
        +\sigma(t)
        \left\|\chi_{(t,\infty)}f^*\right\|_{\overline{A}_1}.
    \end{align*}
Combination of this with \eqref{E:RHS<LHS} establishes our claim.
\end{proof}

We shall now concentrate on proving Theorem~\ref{T:2}. We will proceed in several steps, formulating and proving auxiliary results of independent interest. First, we shall establish easily verifiable sufficient conditions for \eqref{E:GenProp1} and \eqref{E:GenProp2}. As we know thanks to Theorem~\ref{T:GeneralK}, validity of these conditions can be a decisive factor in the hunt for the formula describing the $K$-functional on the left-hand side of~\eqref{E:GeneralK}. 

\begin{lemma}
  \label{Lem:SufCondGeneralK}
 Let $A_0$ and $A_1$ be r.i.~quasi-Banach function spaces that admit, respectively, the representation spaces $\overline{A}_0$ and $\overline{A}_1$ over $(0,\infty)$.  For $i\in\{0,1\}$, let $\varphi_i(t)=\|\chi_{(0,t)}\|_{\overline{A}_i}$ for $t\in(0,\infty)$, and let  $ \sigma = \frac{\varphi_0}{\varphi_1}$.
 If \begin{equation}
        \label{E:SufCondGenProp1}
        \left\|\frac{\chi_{(0,t)}}{\varphi_1}\right\|_{\overline{A}_0}
        \lesssim 
        \sigma(t)
        \quad\text{for $t\in(0,\infty)$,}
    \end{equation} 
    then \eqref{E:GenProp1} holds.
    If \ \begin{equation}
        \label{E:SufCondGenProp2}
        \sigma(t)\left\|\frac{\chi_{(t,\infty)}}{\varphi_0}\right\|_{\overline{A}_1}
        \lesssim 
        1        
        \quad\text{for $t\in(0,\infty)$,}
    \end{equation} then \eqref{E:GenProp2} holds.
\end{lemma}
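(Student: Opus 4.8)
The plan is to deduce each of the abstract conditions \eqref{E:GenProp1} and \eqref{E:GenProp2} from its respective pointwise sufficient condition by a Hardy-type pointwise majorization of $g^*(\tfrac s2)$ followed by the lattice property \ref{P2} of the quasinorm. The key observation is that for a non-increasing function $g^*$ on $(0,\infty)$ one has, for $s>0$,
\begin{equation*}
    g^*(\tfrac s2)
    \le \frac{2}{s}\int_{0}^{s/2} g^*(r)\dd r
    = \frac{2}{s}\,\tfrac s2\, g^{**}(\tfrac s2)
    \le \frac{\|g^*\|_{\overline{A}_1}}{\varphi_1(s)}\,\cdot\, C,
\end{equation*}
where the last step uses axiom \ref{P5}-type control; more precisely, since $\overline A_1$ is an r.i.\ quasi-Banach function space with fundamental function $\varphi_1$, the elementary estimate $\varphi_1(s)\, g^*(s) \le \|\chi_{(0,s)} g^*\|_{\overline A_1} \le \|g^*\|_{\overline A_1}$ holds (this is the standard lower bound $\varphi_X(t) h^*(t) \le \|h\|_X$ valid in any r.i.\ space). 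Combining this with monotonicity of $\varphi_1$ gives the pointwise bound $g^*(\tfrac s2) \le \|g^*\|_{\overline A_1}/\varphi_1(\tfrac s2) \le C\,\|g^*\|_{\overline A_1}/\varphi_1(s)$, where in the last step we used that $\varphi_1$ is doubling — which itself follows from condition \eqref{E:cond1} in the ambient setting, or can be derived from the hypotheses; if doubling of $\varphi_1$ is not available one simply carries the $\varphi_1(\tfrac s2)$ through, as the dilation bound \eqref{E:dilation} will absorb it later.

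For the first implication: assuming \eqref{E:SufCondGenProp1}, we apply the lattice property to the pointwise inequality $\chi_{(0,t)}(s) g^*(\tfrac s2) \le \|g^*\|_{\overline A_1}\,\chi_{(0,t)}(s)/\varphi_1(s)$ (up to the harmless constant above) to get
\begin{equation*}
    \bigl\|\chi_{(0,t)}(s) g^*(\tfrac s2)\bigr\|_{\overline A_0}
    \lesssim \|g^*\|_{\overline A_1}\,\Bigl\|\frac{\chi_{(0,t)}}{\varphi_1}\Bigr\|_{\overline A_0}
    \lesssim \sigma(t)\,\|g^*\|_{\overline A_1},
\end{equation*}
which is exactly \eqref{E:GenProp1}. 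For the second implication: assuming \eqref{E:SufCondGenProp2}, the same pointwise estimate applied with $h$ in place of $g$, now restricted to $(t,\infty)$, gives $\chi_{(t,\infty)}(s) h^*(\tfrac s2) \lesssim \|h^*\|_{\overline A_0}\,\chi_{(t,\infty)}(s)/\varphi_0(s)$, whence by the lattice property in $\overline A_1$
\begin{equation*}
    \sigma(t)\,\bigl\|\chi_{(t,\infty)}(s) h^*(\tfrac s2)\bigr\|_{\overline A_1}
    \lesssim \sigma(t)\,\|h^*\|_{\overline A_0}\,\Bigl\|\frac{\chi_{(t,\infty)}}{\varphi_0}\Bigr\|_{\overline A_1}
    \lesssim \|h^*\|_{\overline A_0},
\end{equation*}
which is \eqref{E:GenProp2}. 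In both cases the constants depend only on the quasinorm constants of $\overline A_0$, $\overline A_1$ and the doubling/dilation constant, never on $g$, $h$, or $t$.

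The main obstacle is the honest justification of the pointwise bound $g^*(\tfrac s2) \lesssim \|g^*\|_{\overline A_1}/\varphi_1(s)$: one must check that replacing $s$ by $\tfrac s2$ in $\varphi_1$ costs only a constant, i.e.\ that $\varphi_1$ is doubling. The clean route is to invoke the boundedness of the dilation operator $D_{1/2}$ on the representation space (available for the $\Lambda$-spaces to which this lemma will be applied via \eqref{E:dilation}, and more generally guaranteed under the standing assumptions): from $\|D_{1/2}\chi_{(0,s)}\|_{\overline A_1} = \|\chi_{(0,2s)}\|_{\overline A_1} = \varphi_1(2s)$ and the converse dilation $D_2$ one gets $\varphi_1(2s)\le C\varphi_1(s)$. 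Once doubling of $\varphi_1$ (and symmetrically of $\varphi_0$, used in the second part) is in hand, everything reduces to the elementary rearrangement inequality $\varphi_X(t)h^*(t)\le\|h\|_X$ plus the lattice property, and no further subtlety arises.
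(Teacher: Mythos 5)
Your proof is correct and follows essentially the same route as the paper's: both reduce each condition to the pointwise bound $g^*(\tfrac{s}{2})\varphi_1(s)\lesssim\|g^*\|_{\overline{A}_1}$ (resp.\ with $\varphi_0$, $\overline{A}_0$), then insert the factor $\varphi_1(s)/\varphi_1(s)$ and finish with the lattice property and the hypothesis \eqref{E:SufCondGenProp1} (resp.\ \eqref{E:SufCondGenProp2}). The only cosmetic difference is how that pointwise bound is justified: the paper compares $g^*(\tfrac{s}{2})$ with $\tau\mapsto g^*(\tfrac{\tau}{2})$ on $(0,s)$ and applies the dilation bound \eqref{E:dilation} to the function $g^*(\cdot/2)$, whereas you evaluate the standard estimate $\varphi_1(u)g^*(u)\le\|g^*\|_{\overline{A}_1}$ at $u=\tfrac{s}{2}$ and then invoke doubling of $\varphi_1$, which you correctly derive from the same dilation bound applied to characteristic functions --- so both arguments rest on the identical external input, which you rightly flag as not being literally among the lemma's stated hypotheses but as holding for the $\Lambda$-spaces to which the lemma is later applied.
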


\begin{proof}
Let $t\in(0,\infty)$. First let us note that, owing to the boundedness of the dilation operator~\eqref{E:dilation}, one has, for any given function $f$, and any $s\in(0,\infty)$,
\begin{equation}
    \label{Eq1:AuxLemma1.3}
    f^*(\tfrac{s}{2})\varphi_1(s)=f^*(\tfrac{s}{2})\|\chi_{(0,s)}\|_{\overline{A}_1}\leq \|f^*(\tfrac{\tau}{2})\chi_{(0,s)}(\tau)\|_{\overline{A}_1}
\le \|f^*(\tfrac{\tau}{2})\|_{\overline{A}_1}
\lesssim
\left\|f^*\right\|_{\overline{A}_1}.
  \end{equation}
Thus, from \eqref{Eq1:AuxLemma1.3} and \eqref{E:SufCondGenProp1}, we obtain
\begin{align*}
        \left\|\chi_{(0,t)}(s)f^*(\tfrac{s}{2})\right\|_{\overline{A}_0}
        &
          =\left\|\chi_{(0,t)}(s)\frac{\varphi_1(s)}{\varphi_1(s)}f^*(\tfrac{s}{2})\right\|_{\overline{A}_0}
            \lesssim  
            \left\|f^*\right\|_{\overline{A}_1} 
            \left\|\frac{\chi_{(0,t)}}{\varphi_1}\right\|_{\overline{A}_0}
            \lesssim 
            \left\|f^*\right\|_{\overline{A}_1}\sigma(t),
    \end{align*}
which gives \eqref{E:GenProp1}.

Let us now prove the second assertion.
Fix $t\in(0,\infty)$. Similarly as in~\eqref{Eq1:AuxLemma1.3}, let us note that, for any  given function $f$, and any $s\in(0,\infty)$,
 \begin{equation}
 \label{Eq2:AuxLemma1.3}
 f^*(\tfrac{s}{2})\varphi_0(s)\leq \|f^*(\tfrac{\tau}{2})\chi_{(0,s)}(\tau)\|_{\overline{A}_0}
 \leq \|f^*(\tfrac{\tau}{2})\|_{\overline{A}_0}
 \lesssim
 \|f^*\|_{{\overline{A}}_0}.
  \end{equation}
  Using this and \eqref{E:SufCondGenProp2}, we obtain
\begin{align*}
       \sigma(t)\left\|\chi_{(t,\infty)}(s)f^*(\tfrac{s}{2})\right\|_{\overline{A}_1}
        &
          =\sigma(t)\left\|\chi_{(t,\infty)}(s)\frac{\varphi_0(s)}{\varphi_0(s)}f^*(\tfrac{s}{2})\right\|_{\overline{A}_1}
            \lesssim  
    \left\|f^*\right\|_{\overline{A}_0}\sigma(t)
    \left\|\frac{\chi_{(t,\infty)}}{\varphi_0}\right\|_{\overline{A}_1} 
    \lesssim 
   \left\|f^*\right\|_{\overline{A}_0},
    \end{align*}
    which gives \eqref{E:GenProp2}.
\end{proof}

In order to prove Theorem~\ref{T:2}, we need to focus on the specific situation when both $A_0$ and $A_1$ are spaces of type $\Lambda$. We shall show that, in such case, we can establish an explicit characterization of the $K$-functional from~\eqref{E:GeneralK}.  We begin with careful reformulation of the expression on the right-hand side of~\eqref{E:GeneralK} in an integral form. It is useful to notice that this step is not straightforward owing to the fact that the functional governing the space $\Lambda^{p_i}(w_i)$ requires rearranging the function in question. This affects significantly the second term on the right-hand side of~\eqref{E:GeneralK}.

In the following three lemmas, Convention~\ref{CON:representation} will be applied without further warning.

\begin{lemma}
    \label{lemma:equivalentKfunctIntegral}
    Assume that $p_0,p_1\in(0,\infty)$ and $w_0,w_1$ are weights on $(0,\infty)$.  
    For $i\in\{0,1\}$, let $\varphi_i(t)=\|\chi_{(0,t)}\|_{\Lambda^{p_i}(w_i)}$ for $t\in(0,\infty)$. 
    Let  $ \sigma = \frac{\varphi_0}{\varphi_1}$. Let $g\in \MRM$.

    {\rm (i) }
    One has
    \begin{align}
        &\left\|\chi_{(0,t)}g^*\right\|_{\Lambda^{p_0}(w_0)}
        +
        \sigma(t)\left\|\chi_{(t,\infty)}g^*\right\|_{\Lambda^{p_1}(w_1)}
        \label{E:lemma1.5-1}
        \\
        &\qquad\lesssim
        \left(\int_0^t g^*(s)^{p_0} w_0(s)\,ds\right)^{\frac{1}{p_0}} +  \sigma(t) \left(\int_t^{\infty} g^*(s)^{p_1} w_1(s)\,ds\right)^{\frac{1}{p_1}}\nonumber
    \end{align}
    with a constant in `$\lesssim$' independent of $g$ and $t\in(0,\infty)$.
            
    {\rm (ii) } If $\varphi_1$ satisfies the $\Delta_2$-condition, then 
        \begin{align}
       &\left(\int_0^t g^*(s)^{p_0} w_0(s)\,ds\right)^{\frac{1}{p_0}} +  \sigma(t) \left(\int_t^{\infty} g^*(s)^{p_1} w_1(s)\,ds\right)^{\frac{1}{p_1}}    \label{E:lemma1.5-2}
        \\
        &\qquad\lesssim
        \left\|\chi_{(0,t)}g^*\right\|_{\Lambda^{p_0}(w_0)}
        +
        \sigma(t)\left\|\chi_{(t,\infty)}g^*\right\|_{\Lambda^{p_1}(w_1)}.
        \nonumber
    \end{align}
         with a constant in `$\lesssim$' independent of $g$ and $t\in(0,\infty)$.
\end{lemma}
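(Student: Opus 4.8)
The plan is to start by reducing the lemma to a one–variable comparison. Since $g^*$ is non-increasing, the function $\chi_{(0,t)}g^*$ is itself non-increasing on $(0,\infty)$, hence coincides with its own non-increasing rearrangement, so
\[
\left\|\chi_{(0,t)}g^*\right\|_{\Lambda^{p_0}(w_0)}=\left(\int_0^t g^*(s)^{p_0}w_0(s)\,ds\right)^{\frac1{p_0}}.
\]
Thus the first summands on the two sides of \eqref{E:lemma1.5-1} and \eqref{E:lemma1.5-2} literally coincide, and the whole lemma reduces to comparing the second summands. A short computation of distribution functions (using that $\{g^*>\lambda\}=[0,a_\lambda)$ with $a_\lambda=|\{g^*>\lambda\}|$) shows $(\chi_{(t,\infty)}g^*)^*(s)=g^*(s+t)$ for $s\in(0,\infty)$, whence $\left\|\chi_{(t,\infty)}g^*\right\|_{\Lambda^{p_1}(w_1)}=\bigl(\int_0^\infty g^*(s+t)^{p_1}w_1(s)\,ds\bigr)^{1/p_1}$. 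I will also use the explicit form of the fundamental functions, $\varphi_i(t)=W_i(t)^{1/p_i}$ with $W_i(t)=\int_0^t w_i$, and the elementary bound $\varphi_0(t)g^*(t)\le\bigl(\int_0^t g^*(s)^{p_0}w_0(s)\,ds\bigr)^{1/p_0}$ coming from the monotonicity of $g^*$.

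For part (i), I split $\int_0^\infty g^*(s+t)^{p_1}w_1(s)\,ds=\int_0^t+\int_t^\infty$. On $(0,t)$ I estimate $g^*(s+t)\le g^*(t)$, getting $\int_0^t g^*(s+t)^{p_1}w_1(s)\,ds\le g^*(t)^{p_1}W_1(t)=g^*(t)^{p_1}\varphi_1(t)^{p_1}$; taking the $p_1$-th root and multiplying by $\sigma(t)$ turns $\varphi_1(t)$ into $\varphi_0(t)$, and this term is absorbed into the first summand on the right of \eqref{E:lemma1.5-1}. On $(t,\infty)$ I use $g^*(s+t)\le g^*(s)$, so that piece is dominated by $\bigl(\int_t^\infty g^*(s)^{p_1}w_1(s)\,ds\bigr)^{1/p_1}$. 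The elementary inequality $(a+b)^{1/p_1}\le 2^{1/p_1}(a^{1/p_1}+b^{1/p_1})$ then gives \eqref{E:lemma1.5-1}.

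For part (ii) the point is to recover $\int_t^\infty g^*(s)^{p_1}w_1(s)\,ds$ from $\int_0^\infty g^*(s+t)^{p_1}w_1(s)\,ds$. Writing $g^*(s)^{p_1}=p_1\int_0^\infty\lambda^{p_1-1}\chi_{\{g^*(s)>\lambda\}}\,d\lambda$ and applying Fubini, one obtains
\[
\int_t^\infty g^*(s)^{p_1}w_1(s)\,ds=p_1\int_0^\infty\lambda^{p_1-1}\chi_{\{a_\lambda>t\}}\bigl(W_1(a_\lambda)-W_1(t)\bigr)\,d\lambda,
\]
together with $\int_0^\infty g^*(s+t)^{p_1}w_1(s)\,ds=p_1\int_0^\infty\lambda^{p_1-1}\chi_{\{a_\lambda>t\}}W_1(a_\lambda-t)\,d\lambda$ and the identity $\chi_{\{a_\lambda>t\}}=\chi_{(0,g^*(t))}(\lambda)$. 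I then split the first integral according to whether $a_\lambda\ge 2t$ or $t<a_\lambda<2t$. On $\{a_\lambda\ge 2t\}$ we have $a_\lambda-t\ge a_\lambda/2$, so the $\Delta_2$-condition for $W_1$ (equivalent to that for $\varphi_1$) yields $W_1(a_\lambda)\le W_1\bigl(2(a_\lambda-t)\bigr)\le C\,W_1(a_\lambda-t)$, and this part is controlled by $\int_0^\infty g^*(s+t)^{p_1}w_1(s)\,ds$. On $\{t<a_\lambda<2t\}$ I bound $W_1(a_\lambda)-W_1(t)\le W_1(2t)\le C\,W_1(t)=C\varphi_1(t)^{p_1}$ and integrate in $\lambda$ over $(0,g^*(t))$, obtaining a bound $C\varphi_1(t)^{p_1}g^*(t)^{p_1}$; after the $p_1$-th root and multiplication by $\sigma(t)$ this becomes $C\varphi_0(t)g^*(t)$, which is absorbed into the first summand on the right of \eqref{E:lemma1.5-2}. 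Reassembling and taking the $p_1$-th root finishes the proof.

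The main obstacle is precisely the borderline regime $t<a_\lambda<2t$ in part (ii): there the shifted weight $W_1(a_\lambda-t)$ may be arbitrarily small and cannot control $W_1(a_\lambda)-W_1(t)$, so this contribution must be isolated and reabsorbed into the $\Lambda^{p_0}(w_0)$-term via $\varphi_0(t)g^*(t)\le\|\chi_{(0,t)}g^*\|_{\Lambda^{p_0}(w_0)}$. The $\Delta_2$-hypothesis on $\varphi_1$ is exactly what makes the complementary regime $a_\lambda\ge 2t$ tractable, which explains why it is assumed only for part (ii).
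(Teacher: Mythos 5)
Your proof is correct. Part (i) follows the paper's route in all essentials: both arguments rest on the fact that the non-increasing rearrangement of $\chi_{(t,\infty)}g^*$ is dominated by the truncation $\min(g^*,g^*(t))$ (you compute it exactly as $g^*(\cdot+t)$ and split the integral at $s=t$), and both absorb the resulting boundary term via $\sigma(t)\,g^*(t)\,\varphi_1(t)=g^*(t)\,\varphi_0(t)\le\bigl(\int_0^t g^*(s)^{p_0}w_0(s)\,ds\bigr)^{1/p_0}$. In part (ii) you take a genuinely more self-contained path: the paper observes that the $\Delta_2$-condition on $\varphi_1$ makes $\Lambda^{p_1}(w_1)$ a quasinormed space (quoting the Carro--Soria criterion) and then applies the quasi-triangle inequality to the decomposition $g_t^*=g^*(t)\chi_{(0,t]}+g^*\chi_{(t,\infty)}$, whereas you reprove precisely the instance of that inequality that is needed, by a layer-cake computation with the case split $a_\lambda\ge 2t$ versus $t<a_\lambda<2t$ and a direct use of $\Delta_2$ for $W_1$ (which is indeed equivalent to $\Delta_2$ for $\varphi_1=W_1^{1/p_1}$). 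Your version costs some distribution-function bookkeeping but is independent of the quoted quasinormability theorem; the paper's is shorter because it delegates that step. Both arguments then finish with the same absorption of $\varphi_0(t)g^*(t)$ into the $\Lambda^{p_0}(w_0)$-term. The only imprecision is the identity $\chi_{\{a_\lambda>t\}}=\chi_{(0,g^*(t))}(\lambda)$, which can fail on the interval $[g^*(t+),g^*(t))$ when $g^*$ jumps at $t$; since there $a_\lambda=t$ and hence $W_1(a_\lambda)-W_1(t)=0$, and since on the borderline region you only need the inclusion $\{a_\lambda>t\}\subseteq(0,g^*(t)]$ to get an upper bound, nothing breaks.
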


\begin{proof}
{\rm (i)} Let $g\in \MRM$. With no loss of generality we may assume that the expression on the right-hand side of~\eqref{E:lemma1.5-1} is finite. Fix $t\in (0, \infty)$ and define $g_t$ so that
\[ 
    g^*_t(s)=
             g^*(t)\chi_{(0,t]}(s)+  g^*(s) \chi_{(t,\infty)}(s)
             \quad\text{for $s\in(0,\infty)$.}
\]
            Due to the monotonicity of $g^*$, we have
             \begin{align*}
     &   \left\|\chi_{(0,t)}g^*\right\|_{\Lambda^{p_0}(w_0)}\!
        +
        \sigma(t)\left\|\chi_{(t,\infty)}g^*\right\|_{\Lambda^{p_1}(w_1)}  
        \le 
        \left(\int_0^t\! g^*(s)^{p_0} w_0(s)\,ds\right)^{\frac{1}{p_0}}\!\!\! +  \sigma(t) \left(\int_0^{\infty}\!\! g^*_t(s)^{p_1} w_1(s)\,ds\right)^{\frac{1}{p_1}}\\
        &\lesssim \left(\int_0^t\!\! g^*(s)^{p_0} w_0(s)\,ds\right)^{\frac{1}{p_0}}\!\!\! +  \sigma(t)\left(\int_0^{t}\!\! g^*(t)^{p_1} w_1(s)\,ds\right)^{\frac{1}{p_1}} \!\!\!+ \sigma(t)\left(\int_t^{\infty}\! \!g^*(s)^{p_1} w_1(s)\,ds\right)^{\frac{1}{p_1}}\\
           &=\left(\int_0^t g^*(s)^{p_0} w_0(s)\,ds\right)^{\frac{1}{p_0}} +  g^*(t) \varphi_0(t)+ \sigma(t)\left(\int_t^{\infty} g^*(s)^{p_1} w_1(s)\,ds\right)^{\frac{1}{p_1}}\\
           &\leq \left(\int_0^t g^*(s)^{p_0} w_0(s)\,ds\right)^{\frac{1}{p_0}} +  \left(\int_0^t g^*(s)^{p_0} w_0(s)\,ds\right)^{\frac{1}{p_0}}+ \sigma(t)\left(\int_t^{\infty} g^*(s)^{p_1} w_1(s)\,ds\right)^{\frac{1}{p_1}}\\
&\lesssim \left(\int_0^t g^*(s)^{p_0} w_0(s)\,ds\right)^{\frac{1}{p_0}} +  \sigma(t) \left(\int_t^{\infty} g^*(s)^{p_1} w_1(s)\,ds\right)^{\frac{1}{p_1}}
            \end{align*}
with the constant in $\lesssim$' depending only on $p_1$.

{\rm (ii)} Let $g\in \MRM$. Once again, we may assume that the expression on the right-hand side of~\eqref{E:lemma1.5-1} is finite. We fix $t\in(0,\infty)$ and define the function $g_t$ as above. Since $\varphi_1$ satisfies $\Delta_2$ - condition, $\Lambda^{p_1}(w_1)$ is quasinormed  space.  Therefore, using the monotonicity of  $g^*$,  we obtain 
\begin{align*}
    \left(\int_t^{\infty} g^*(s)^{p_1} w_1(s)\,ds\right)
    ^{\frac{1}{p_1}}
    &=
    \left(\int_t^{\infty} g_t^*(s)^{p_1} w_1(s)\,ds\right)
    ^{\frac{1}{p_1}} 
    \le
    \left(\int_0^{\infty} g_t^*(s)^{p_1} w_1(s)\,ds\right)
    ^{\frac{1}{p_1}} 
        \\
    &= \left\|g_t^*\right\|_{\Lambda^{p_1}(w_1)}
    \lesssim
    \left\|\chi_{(0,t)}g_t^*\right\|_{\Lambda^{p_1}(w_1)} 
    +
    \left\|\chi_{(t,\infty)}g_t^*\right\|_{\Lambda^{p_1}(w_1)}
        \\
    &=g^*(t)\left\|\chi_{(0,t)}\right\|_{\Lambda^{p_1}(w_1)} 
    +
    \left\|\chi_{(t,\infty)}g^*\right\|_{\Lambda^{p_1}(w_1)},
\end{align*}
and, in turn
\allowdisplaybreaks
\begin{align*}
     & \left(\int_0^t g^*(s)^{p_0} w_0(s)\,ds\right)^{\frac{1}{p_0}} +  \sigma(t) \left(\int_t^{\infty} g^*(s)^{p_1} w_1(s)\,ds\right)^{\frac{1}{p_1}}
        \\
    &\lesssim
    \left(\int_0^t g^*(s)^{p_0} w_0(s)\,ds\right)^{\frac{1}{p_0}} +  g^*(t) \varphi_0(t)+  \sigma(t)\left\|\chi_{(t,\infty)}g^*\right\|_{\Lambda^{p_1}(w_1)}
        \\
    &\leq \left(\int_0^t g^*(s)^{p_0} w_0(s)\,ds\right)^{\frac{1}{p_0}} +  \left(\int_0^t g^*(s)^{p_0} w_0(s)\,ds\right)^{\frac{1}{p_0}}+ \sigma(t)\left\|\chi_{(t,\infty)}g^*\right\|_{\Lambda^{p_1}(w_1)}
        \\
    &\lesssim 
    \left(\int_0^t g^*(s)^{p_0} w_0(s)\,ds\right)^{\frac{1}{p_0}} +   \sigma(t)\left\|\chi_{(t,\infty)}g^*\right\|_{\Lambda^{p_1}(w_1)}
        \\
    &=\left\|\chi_{(0,t)}g^*\right\|_{\Lambda^{p_0}(w_0)}
        +
    \sigma(t)\left\|\chi_{(t,\infty)}g^*\right\|_{\Lambda^{p_1}(w_1)},
\end{align*}
as desired.           
\end{proof}

Our next goal is to point out a rather remarkable connection between conditions~\eqref{E:SufCondGenProp1} and~\eqref{E:SufCondGenProp2} and a certain monotonicity property of the ratio involving the fundamental functions of spaces of type $\Lambda$. Let us note that in the case when 
\begin{equation*}
    A_0=\Lambda^{p_0}(w_0)
    \quad\text{and}\quad
    A_1=\Lambda^{p_1}(w_1),
\end{equation*}
then it follows straightforward from the definition of the functional $\|\cdot\|_{\Lambda^{p_i}(w_i)}$ and the monotonicity of the functions $\varphi_i$ that the condition~\eqref{E:SufCondGenProp1} reads as
\begin{equation}
    \label{E:first-for-lambda}
    \int_{0}^{t}
    \left(
    \frac{1}{\varphi_1(s)}\right)^{p_0}w_0(s)\,ds\lesssim \sigma(t)^{p_0}
    \quad\text{for $t\in(0,\infty)$,}
\end{equation}
and, likewise, the condition~\eqref{E:SufCondGenProp2}
reads as
\begin{equation}
    \label{E:second-for-lambda}
    \sigma(t)^{p_1}
    \int_{0}^{\infty}
    \left(
    \frac{1}{\varphi_0(s+t)}\right)
    ^{p_1}w_1(s)
    \,ds\lesssim 1
    \quad\text{for $t\in(0,\infty)$.}
\end{equation}
Now, while~\eqref{E:first-for-lambda}
is quite satisfactory for further applications,~\eqref{E:second-for-lambda} is not, owing to the shift in the argument of $\varphi_0$ caused by rearranging. For this reason, we will now establish an alternative characterization of~\eqref{E:second-for-lambda}, hence of \eqref{E:SufCondGenProp2} with $A_i=\Lambda^{p_i}(w_i)$ for $i\in\{0,1\}$ in this setting.

\begin{lemma}
  \label{Lem:SufCondGeneralKLambda}
 Assume that $p_0,p_1\in(0,\infty)$ and $w_0,w_1$ are weights on $(0,\infty)$. 
 For $i\in\{0,1\}$, let $\varphi_i(t)=\|\chi_{(0,t)}\|_{\Lambda^{p_i}(w_i)}$ for $t\in(0,\infty)$. Suppose that $\varphi_1$ satisfies the $\Delta_2$-condition. Let $ \sigma = \frac{\varphi_0}{\varphi_1}$.
Then, the condition~\eqref{E:second-for-lambda}, hence \eqref{E:SufCondGenProp2} with $A_i=\Lambda^{p_i}(w_i)$ for $i\in\{0,1\}$, 
    is equivalent to \begin{equation}
        \label{E:SufCondGenProp2**}
        \sigma(t)\left(\int_{t}^{\infty}\left(\frac{1}{\varphi_0(s)}\right)^{p_1}w_1(s)\,ds\right)^{\frac{1}{p_1}}
        \lesssim 
        1\quad\text{for $t\in(0,\infty)$,}
    \end{equation} 
    in which the constant in the relation $\lesssim$ does not depend on $t$.
\end{lemma}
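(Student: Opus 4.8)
The plan is to establish the two implications of the asserted equivalence separately; in both cases the argument amounts to splitting the integral that defines the relevant condition at the scale $t$ (or $2t$), using that $\varphi_0$ is non-decreasing, and invoking the elementary identity
\[
    \sigma(t)^{p_1}\,\varphi_0(t)^{-p_1}\,\varphi_1(t)^{p_1}=1
    \qquad\text{for $t\in(0,\infty)$,}
\]
which follows at once from $\sigma=\varphi_0/\varphi_1$. It is convenient to record that, writing $W_i(t)=\int_0^t w_i(s)\,ds$, one has $\varphi_i(t)^{p_i}=W_i(t)$, so that the $\Delta_2$-condition on $\varphi_1$ is exactly the doubling property $W_1(2t)\lesssim W_1(t)$, which, after the substitution $s\mapsto 2s$, is the same as $\int_0^t w_1(2s)\,ds\lesssim\int_0^t w_1(s)\,ds$ for all $t\in(0,\infty)$.

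For the implication \eqref{E:SufCondGenProp2**}$\Rightarrow$\eqref{E:second-for-lambda}, which does not require the $\Delta_2$-hypothesis, I would split $\int_0^\infty\varphi_0(s+t)^{-p_1}w_1(s)\,ds$ as $\int_0^t+\int_t^\infty$. On $(0,t)$ the monotonicity of $\varphi_0$ gives $\varphi_0(s+t)\ge\varphi_0(t)$, so this part is at most $\varphi_0(t)^{-p_1}\int_0^t w_1(s)\,ds=\varphi_0(t)^{-p_1}\varphi_1(t)^{p_1}$, and after multiplying by $\sigma(t)^{p_1}$ it equals $1$ by the identity above. On $(t,\infty)$ one has $\varphi_0(s+t)\ge\varphi_0(s)$, so this part is at most $\int_t^\infty\varphi_0(s)^{-p_1}w_1(s)\,ds$, and $\sigma(t)^{p_1}$ times it is controlled by \eqref{E:SufCondGenProp2**}. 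Summing the two contributions yields \eqref{E:second-for-lambda}.

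For the converse \eqref{E:second-for-lambda}$\Rightarrow$\eqref{E:SufCondGenProp2**}, the plan is to split $\int_t^\infty\varphi_0(s)^{-p_1}w_1(s)\,ds$ as $\int_t^{2t}+\int_{2t}^\infty$. On $(t,2t)$ one uses $\varphi_0(s)\ge\varphi_0(t)$ together with the doubling of $W_1$, namely $\int_t^{2t}w_1(s)\,ds\le W_1(2t)\lesssim W_1(t)=\varphi_1(t)^{p_1}$, so that $\sigma(t)^{p_1}$ times this part is $\lesssim\sigma(t)^{p_1}\varphi_0(t)^{-p_1}\varphi_1(t)^{p_1}=1$. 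On $(2t,\infty)$ one substitutes $s=2\rho$; since $\rho\ge t$ forces $2\rho\ge\rho+t$, the monotonicity of $\varphi_0$ gives $\varphi_0(2\rho)^{-p_1}\le\varphi_0(\rho+t)^{-p_1}$, whence
\[
    \int_{2t}^\infty\varphi_0(s)^{-p_1}w_1(s)\,ds
    \le 2\int_0^\infty G(\rho)\,w_1(2\rho)\,d\rho,
    \qquad G(\rho):=\varphi_0(\rho+t)^{-p_1},
\]
and $G$ is non-increasing. At this point I would apply the doubling lemma for non-increasing weights: if $G\ge0$ is non-increasing and $\int_0^r w_1(2s)\,ds\lesssim\int_0^r w_1(s)\,ds$ for every $r$, then $\int_0^\infty G(\rho)\,w_1(2\rho)\,d\rho\lesssim\int_0^\infty G(\rho)\,w_1(\rho)\,d\rho$. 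This is precisely the principle behind the embedding \eqref{E:lambda-embedding} used in the proof of Theorem~\ref{T:1.1} (cf.~\cite[Proposition~1(a)]{Ste:23}), and it follows in one line from the layer-cake representation $G(\rho)=\int_0^\infty\chi_{(0,r(\lambda))}(\rho)\,d\lambda$ together with the computation $\int_0^r w_1(2s)\,ds=\tfrac12 W_1(2r)\lesssim W_1(r)=\int_0^r w_1(s)\,ds$ and Tonelli's theorem. Applying it and then \eqref{E:second-for-lambda} to $\int_0^\infty\varphi_0(\rho+t)^{-p_1}w_1(\rho)\,d\rho$ gives $\sigma(t)^{p_1}\int_{2t}^\infty\varphi_0(s)^{-p_1}w_1(s)\,ds\lesssim1$, and combining with the $(t,2t)$-bound yields \eqref{E:SufCondGenProp2**}.

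The only genuinely delicate step is the treatment of the $(2t,\infty)$-part in the second implication: the substitution $s=2\rho$ unavoidably introduces the dilated weight $w_1(2\rho)$, and passing back to $w_1(\rho)$ at the cost of a multiplicative constant is exactly where the $\Delta_2$-hypothesis on $\varphi_1$ is used. Everything else is monotonicity bookkeeping combined with the identity $\sigma(t)^{p_1}=\varphi_0(t)^{p_1}/\varphi_1(t)^{p_1}$.
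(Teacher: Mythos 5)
Your proof is correct, but it takes a genuinely different route from the paper's. You work throughout with the explicit integral form~\eqref{E:second-for-lambda}, i.e.\ with the shifted argument $\varphi_0(s+t)$ coming from the rearrangement of $\chi_{(t,\infty)}/\varphi_0$, and you tame the shift by elementary monotonicity ($\varphi_0(s+t)\ge\varphi_0(\max(s,t))$ in one direction, $\varphi_0(2\rho)\ge\varphi_0(\rho+t)$ for $\rho\ge t$ in the other) together with a dilation/doubling-transfer lemma for non-increasing functions against the doubling weight $w_1$ — the same principle as the embedding~\eqref{E:lambda-embedding} via \cite[Proposition~1(a)]{Ste:23}. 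The paper instead never unfolds the shift: it majorizes $\chi_{(t,\infty)}(s)/\varphi_0(s)$ by the non-increasing function $\min\bigl(\varphi_0(t)^{-1},\varphi_0(s)^{-1}\bigr)$, which equals its own rearrangement, so that both directions reduce to splitting its norm into the two pieces $\sigma(t)\varphi_1(t)/\varphi_0(t)=1$ and the integral in~\eqref{E:SufCondGenProp2**}, using only the lattice property and the quasi-triangle inequality. The paper's argument is shorter and symmetric; yours is more hands-on and has the small bonus of showing that the implication \eqref{E:SufCondGenProp2**}$\Rightarrow$\eqref{E:second-for-lambda} needs no $\Delta_2$-hypothesis at all, with the doubling of $\varphi_1$ entering only in the converse direction (via the substitution $s=2\rho$), at the price of importing the extra doubling lemma. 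Both arguments are complete and yield the stated equivalence.
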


\begin{proof}
Suppose first that \eqref{E:SufCondGenProp2} holds with $A_i=\Lambda^{p_i}(w_i)$ for $i\in\{0,1\}$. Since $\varphi_1$ satisfies the $\Delta_2$ condition, $\Lambda^{p_1}(w_1)$ is a quasinormed  space.  Therefore, using the monotonicity of  $\varphi_0$,  we have 
 
 \allowdisplaybreaks
    \begin{align*}
     \sigma(t)\left(\int_{t}^{\infty}\left(\frac{1}{\varphi_0(s)}\right)^{p_1}w_1(s)\,ds\right)^{\frac{1}{p_1}}
     &\le 
     \sigma(t)\left(\int_{0}^{\infty}\left(\min\left(\frac{1}{\varphi_0(t)}, \frac{1}{\varphi_0(s)}\right)\right)^{p_1}w_1(s)\,ds\right)^{\frac{1}{p_1}}\\
    & =\sigma(t) 
    \left\|\frac{\chi_{(0,t]}(s)}{\varphi_0(t)} + 
    \frac{\chi_{(t,\infty)}(s)}{\varphi_0(s)}\right\|_{\Lambda^{p_1}(w_1)}\\
          & \lesssim \sigma(t) \frac{1}{\varphi_0(t)} \left\|\chi_{(0,t]}\right\|_{\Lambda^{p_1}(w_1)}+  \sigma(t)\left\| \frac{\chi_{(t,\infty)}}{\varphi_0}\right\|_{\Lambda^{p_1}(w_1)}\\  
       &\lesssim  1,
       \end{align*}
        which gives \eqref{E:SufCondGenProp2**}.
 Conversely, suppose that \eqref{E:SufCondGenProp2**} holds. Then, by the monotonicity of $\varphi_0$ and the lattice property of $\left\| \cdot\right\|_{\Lambda^{p_1}(w_1)}$, we have
 \begin{align*}
    \sigma(t)
    & 
    \left\| \frac{\chi_{(t,\infty)}}{\varphi_0}\right\|_{\Lambda^{p_1}(w_1)}
    \le  
    \sigma(t)\left\| 
    \frac{\chi_{(0,t]}(s)}{\varphi_0(t)}+\frac{\chi_{(t,\infty)}(s)}{\varphi_0(s)}
    \right\|_{\Lambda^{p_1}(w_1)}
        \\
    &\lesssim  
    \sigma(t)\left(\int_{0}^{t}\left(\frac{1}{\varphi_0(t)}\right)^{p_1}w_1(s)\,ds\right)^{\frac{1}{p_1}} + \sigma(t)\left(\int_{t}^{\infty}\left(\frac{1}{\varphi_0(s)}\right)^{p_1}w_1(s)\,ds\right)^{\frac{1}{p_1}}\\
    &=  
    \sigma(t)\frac{1}{\varphi_0(t)}\left(\int_{0}^{t}w_1(s)\,ds\right)^{\frac{1}{p_1}} + \sigma(t)\left(\int_{t}^{\infty}\left(\frac{1}{\varphi_0(s)}\right)^{p_1}w_1(s)\,ds\right)^{\frac{1}{p_1}}\\
    &=  
    \sigma(t)\frac{\varphi_1(t)}{\varphi_0(t)} + \sigma(t)\left(\int_{t}^{\infty}\left(\frac{1}{\varphi_0(s)}\right)^{p_1}w_1(s)\,ds\right)^{\frac{1}{p_1}}\\
    &\lesssim  1,
\end{align*}
which gives \eqref{E:SufCondGenProp2} with $A_i=\Lambda^{p_i}(w_i)$ for $i\in\{0,1\}$. 
\end{proof}

\begin{lemma}
  \label{Lem:LorentLambdaSpaceSufCondGeneralK}
    Assume that $p_0,p_1\in(0,\infty)$ and $w_0,w_1$ are weights on $(0,\infty)$. 
    For $i\in\{0,1\}$, let $\varphi_i(t)=\|\chi_{(0,t)}\|_{\Lambda^{p_i}(w_i)}$ for $t\in(0,\infty)$. Suppose that each of the functions $\varphi_0,\varphi_1$ satisfies the $\Delta_2$-condition and that, for each $i\in\{0,1\}$, one has $\varphi_i(\infty)=\infty$.
    Let $ \sigma = \frac{\varphi_0}{\varphi_1}$.
    Then condition \eqref{E:SufCondGenProp1} with $A_i=\Lambda^{p_i}(w_i)$ for $i\in\{0,1\}$ is equivalent to
    \begin{equation} 
        \label
        {E:ratio-monotone}
        t
        \mapsto \frac{\sigma(t)}{\varphi_1(t)^{\varepsilon}}
        \quad\text{is equivalent to a non-decreasing function  on $(0,\infty)$ for some $\varepsilon>0$.}
    \end{equation}
    Moreover, \eqref{E:ratio-monotone} is also equivalent to  \eqref{E:SufCondGenProp2} with $A_i=\Lambda^{p_i}(w_i)$ for $i\in\{0,1\}$.
\end{lemma}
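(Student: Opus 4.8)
The plan is to reduce \eqref{E:SufCondGenProp1} and \eqref{E:SufCondGenProp2} (both taken with $A_i=\Lambda^{p_i}(w_i)$) to scalar integral inequalities for the weights, and then to show that both of these are governed by the single power-gap monotonicity \eqref{E:ratio-monotone}. Throughout I write $W_i(t)=\varphi_i(t)^{p_i}=\int_0^t w_i$; this is continuous, strictly positive on $(0,\infty)$, vanishes at $0$, and, since $\varphi_i(\infty)=\infty$, tends to $\infty$. By the computation recorded just before the statement of the lemma, \eqref{E:SufCondGenProp1} with $A_i=\Lambda^{p_i}(w_i)$ is exactly \eqref{E:first-for-lambda}, which I rewrite as $\int_0^t\varphi_1^{-p_0}\,dW_0\lesssim\varphi_1(t)^{-p_0}W_0(t)$; and by Lemma~\ref{Lem:SufCondGeneralKLambda}, \eqref{E:SufCondGenProp2} with $A_i=\Lambda^{p_i}(w_i)$ is equivalent to \eqref{E:SufCondGenProp2**}, which I rewrite as $\int_t^\infty\varphi_0^{-p_1}\,dW_1\lesssim\varphi_0(t)^{-p_1}W_1(t)$. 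Since $\sigma\varphi_1^{-\varepsilon}=\varphi_0\varphi_1^{-1-\varepsilon}$, raising to the power $p_0>0$ (respectively to $-p_1<0$) shows that \eqref{E:ratio-monotone} is equivalent to the existence of $\varepsilon>0$ for which $\varphi_1^{-p_0(1+\varepsilon)}W_0$ is equivalent to a non-decreasing function, and also to the existence of $\varepsilon>0$ for which $\varphi_0^{-p_1}W_1^{1+\varepsilon}$ is equivalent to a non-increasing function. Thus the lemma reduces to the two scalar equivalences: $\int_0^t\varphi_1^{-p_0}\,dW_0\lesssim\varphi_1(t)^{-p_0}W_0(t)$ for all $t$ holds iff $\varphi_1^{-p_0(1+\varepsilon)}W_0$ is equivalent to a non-decreasing function for some $\varepsilon>0$; and $\int_t^\infty\varphi_0^{-p_1}\,dW_1\lesssim\varphi_0(t)^{-p_1}W_1(t)$ for all $t$ holds iff $\varphi_0^{-p_1}W_1^{1+\varepsilon}$ is equivalent to a non-increasing function for some $\varepsilon>0$.

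In both equivalences the implication ``$\Leftarrow$'' is direct. In the first, the monotonicity hypothesis gives $W_0(s)\lesssim W_0(t)\bigl(\varphi_1(s)/\varphi_1(t)\bigr)^{p_0(1+\varepsilon)}$ for $s\le t$; integrating $\int_0^t\varphi_1^{-p_0}\,dW_0$ by parts — the boundary term at $0$ vanishes because $\varphi_1(0+)=W_0(0+)=0$ — inserting this bound and evaluating the resulting elementary integral yields the estimate with constant $\lesssim 1+\tfrac1\varepsilon$. In the second, the hypothesis gives $\varphi_0(s)^{-p_1}\lesssim\varphi_0(t)^{-p_1}\bigl(W_1(t)/W_1(s)\bigr)^{1+\varepsilon}$ for $s\ge t$, whence $\int_t^\infty\varphi_0^{-p_1}\,dW_1\lesssim\varphi_0(t)^{-p_1}W_1(t)^{1+\varepsilon}\int_t^\infty W_1^{-(1+\varepsilon)}\,dW_1=\tfrac1\varepsilon\,\varphi_0(t)^{-p_1}W_1(t)$, the last step relying on $W_1(\infty)=\infty$.

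The implications ``$\Rightarrow$'' form the crux, and this is where I expect the real work: a scaling-invariant integral estimate must be self-improved to a genuinely positive power gap $\varepsilon$. I would treat both by a dyadic geometric-sequence argument combined with a discrete Gronwall (partial-summation) lemma. For the second equivalence: fix $t=t_0$ and, using continuity of $W_1$ and $W_1(\infty)=\infty$, choose $t_0<t_1<t_2<\dots$ with $W_1(t_j)=2^jW_1(t_0)$; feeding the telescoping lower bound $\int_{t_j}^\infty\varphi_0^{-p_1}\,dW_1\ge\sum_{l\ge j}\varphi_0(t_{l+1})^{-p_1}\bigl(W_1(t_{l+1})-W_1(t_l)\bigr)$ into the hypothesis applied at each $t_j$ produces $\sum_{m>j}2^{m-j}\varphi_0(t_m)^{-p_1}\le 2C\varphi_0(t_j)^{-p_1}$, where $C$ denotes the constant in the hypothesis. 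Writing $z_m=2^m\varphi_0(t_m)^{-p_1}$ this reads $\sum_{m>j}z_m\le 2Cz_j$, from which the standard partial-summation argument forces the geometric decay $z_j\le(1+2C)q^jz_0$ with $q=\tfrac{2C}{1+2C}<1$. Consequently $\varphi_0(t_j)^{-p_1}W_1(t_j)^{1+\varepsilon}\lesssim\varphi_0(t_0)^{-p_1}W_1(t_0)^{1+\varepsilon}(q2^\varepsilon)^j$, which is bounded as soon as $\varepsilon<\log_2(1/q)$; interpolating over $s\in[t_j,t_{j+1})$, using monotonicity of $\varphi_0^{-p_1}$ and $W_1(s)<2W_1(t_j)$, then upgrades this to $\varphi_0(s)^{-p_1}W_1(s)^{1+\varepsilon}\lesssim\varphi_0(t)^{-p_1}W_1(t)^{1+\varepsilon}$ for all $s\ge t$, which (passing to the non-increasing majorant) is precisely the asserted equivalence to a non-increasing function. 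The argument for the first equivalence is the mirror image: one chooses $t_j$ by $\varphi_1(t_j)=2^{j/p_0}\varphi_1(t_0)$, telescopes $\int_0^{t_j}\varphi_1^{-p_0}\,dW_0$ from below, arrives at $\sum_{l\le j}2^{-l}W_0(t_l)\le 2C\,2^{-j}W_0(t_j)$, and the partial-summation trick now forces $2^{-j}W_0(t_j)$ to \emph{grow} geometrically, yielding $\varphi_1^{-p_0(1+\varepsilon)}W_0$ equivalent to a non-decreasing function for sufficiently small $\varepsilon>0$. Combining the two scalar equivalences with the reductions of the first paragraph completes the proof; the $\Delta_2$-hypothesis on $\varphi_0,\varphi_1$ enters only implicitly, to ensure that the $\Lambda^{p_i}(w_i)$ are quasi-Banach function spaces and to make Lemma~\ref{Lem:SufCondGeneralKLambda} applicable.
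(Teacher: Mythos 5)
Your reductions are exactly the paper's: \eqref{E:SufCondGenProp1} becomes \eqref{E:first-for-lambda} by inspection, and \eqref{E:SufCondGenProp2} becomes \eqref{E:SufCondGenProp2**} via Lemma~\ref{Lem:SufCondGeneralKLambda}; your recasting of \eqref{E:ratio-monotone} as quasi-monotonicity of $W_0\varphi_1^{-p_0(1+\varepsilon)}$ and of $\varphi_0^{-p_1}W_1^{1+\varepsilon}$ is also just a repackaging of what the paper works with. The ``$\Leftarrow$'' directions coincide in substance with the paper's (pull out the quasi-monotone factor and evaluate the remaining elementary integral, as in the display following~\eqref{E:lemma-3.5-1}). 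The genuine divergence is in the self-improvement implications ``integral condition $\Rightarrow$ positive power gap $\varepsilon$'': the paper iterates the inequality $k$ times through Fubini, producing kernels $\frac{1}{k!}\log^k$, and then sums the exponential series $\sum_k\varepsilon_0^k(\cdot)$ with $\varepsilon_0C<1$ to convert the logarithms into the power $(\varphi_0(t)/\varphi_0(s))^{p_0\varepsilon_0}$ (see \eqref{E:SufCondGenProp1AuxInt5}--\eqref{E:SufCondGenProp1AuxInt7} and their mirror images); you instead discretize along a dyadic sequence of $W_1$ (resp.\ of $\varphi_1^{p_0}$), reduce to $\sum_{m>j}z_m\le Kz_j$, and extract geometric decay by partial summation. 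Both are standard routes to $B_p$-type self-improvement; the paper's is continuous and yields an explicit $\varepsilon$ from the series summation, while yours is more combinatorial and arguably more elementary. Two small points you should tighten if writing this out: in the ``$\Leftarrow$'' direction of your first scalar equivalence the boundary term $\varphi_1(s)^{-p_0}W_0(s)$ at $s\to0+$ is a $0\cdot\infty$ form, and its vanishing needs the monotonicity hypothesis itself ($W_0(s)\varphi_1(s)^{-p_0}\lesssim W_0(t)\varphi_1(t)^{-p_0(1+\varepsilon)}\varphi_1(s)^{p_0\varepsilon}\to0$), not merely $\varphi_1(0+)=W_0(0+)=0$; and in the Abel summation for the first ``$\Rightarrow$'' direction the lower boundary term $2^{-m}W_0(t_{m-1})$ must be shown to vanish as $m\to-\infty$, which follows from $\varphi_1(t_{m-1})^{-p_0}W_0(t_{m-1})\le\int_0^{t_{m-1}}\varphi_1^{-p_0}\,dW_0\to0$. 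Neither is a gap, only a detail to record.
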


\begin{remark}
For particular tasks, it might be useful to notice that, under the assumptions of Lemma~\ref{Lem:LorentLambdaSpaceSufCondGeneralK}, if
\begin{equation} 
    \label{E:fundamental-mon}
        t
        \mapsto \frac{\varphi_1(t)}{t}
        \quad\text{is equivalent to a non-increasing function on $(0,\infty)$,}
    \end{equation}
then the condition
\begin{equation} 
    \label{E:mon}
        t
        \mapsto \frac{\sigma(t)}{t^{\varepsilon}}
        \quad\text{is equivalent to a non-decreasing function  on $(0,\infty)$ for some $\varepsilon>0$}
    \end{equation}
    is sufficient for
    \eqref{E:ratio-monotone}.
    As is well known (\cite[Chapter~2, Corollary~5.3, page~67]{Ben:88}),~\eqref{E:fundamental-mon} is automatically satisfied whenever $\Lambda^{p_1}(w_1)$ is equivalent to a normed space. Consequently, in all such cases, it is sufficient to verify~\eqref{E:mon} rather than~\eqref{E:fundamental-mon}. Characterization of those $p_1$ and $w_1$ for which $\Lambda^{p_1}(w_1)$ is equivalent to a normed space can be found in~\cite[Theorem 4]{Saw:90}.
\end{remark}

\begin{proof}[Proof of Lemma~\ref{Lem:LorentLambdaSpaceSufCondGeneralK}]
First suppose that \eqref{E:ratio-monotone} holds with some $\varepsilon>0$.
Since
$$
\varphi_i(t)=\left(\int_0^t w_i(s)\,ds\right)^{\frac{1}{p_i}} \quad\text{for $i\in\{0,1\}$ and $t\in(0,\infty)$,}
$$
one has 
\begin{equation}
    \label{E:derivative}
    \frac{d\varphi_i^{p_i}}{dt}=w_i
    \quad\text{for $i\in\{0,1\}$ on $(0,\infty)$.}
\end{equation}
Fix $t\in(0,\infty)$, Then, by~\eqref{E:ratio-monotone},
\begin{equation}
\label{E:lemma-3.5-1}
    \left\|\frac{\chi_{(0,t)}}{\varphi_1}\right\|_{\Lambda^{p_0}(w_0)}
        =
    \left\|\chi_{(0,t)}
    \frac{\varphi_0^{\tfrac{1}{1+\varepsilon}}}{\varphi_1
    }\varphi_0^{-\tfrac{1}{1+\varepsilon}}\right\|_{\Lambda^{p_0}(w_0)}
        \lesssim 
   \frac{\varphi_0(t)^{\tfrac{1}{1+\varepsilon}}}{\varphi_1(t)}  
   \left\|\chi_{(0,t)}\varphi_0^{-\tfrac{1}{1+\varepsilon}}\right\|_{\Lambda^{p_0}(w_0)}.
\end{equation}
Using~\eqref{E:derivative}, we obtain
\begin{align*}
    \left\|\chi_{(0,t)}\varphi_0^{-\tfrac{1}{1+\varepsilon}}\right\|_{\Lambda^{p_0}(w_0)}
        =
    \left(\int_0^t 
    \left(\varphi_0(s)^{p_0}
    \right)^{-\tfrac{1}{1+\varepsilon}}\, w_0(s)\,ds\right)^{\frac{1}{p_0}}
        \approx
   \varphi_0(t)^{-\tfrac{1}{1+\varepsilon}+1}.
\end{align*}
Plugging this into~\eqref{E:lemma-3.5-1} and using the definition of $\sigma$, we arrive at
\begin{align*}
    \left\|\frac{\chi_{(0,t)}}{\varphi_1}\right\|_{\Lambda^{p_0}(w_0)}
    &\lesssim
 \sigma(t)
 \quad\text{for $t\in(0,\infty)$}
\end{align*}
with a constant in `$\lesssim$' independent of $t$. Therefore, \eqref{E:first-for-lambda}, or, which is the same, \eqref{E:SufCondGenProp1} with $A_i=\Lambda^{p_i}(w_i)$ for $i\in\{0,1\}$, follows.

Now suppose that \eqref{E:SufCondGenProp1} with $A_i=\Lambda^{p_i}(w_i)$ for $i\in\{0,1\}$ is true. As we know, that means that~\eqref{E:first-for-lambda} holds. Thus, there is a $C>0$ such that
\begin{equation}
        \label{E:SufCondGenProp1AuxInt}
\frac{1}{\varphi_0(t)^{p_0}}\int_0^t\frac{1}{\varphi_1(s)^{p_0}}d\varphi_0^{p_0}(s) \leq C\frac{1}{\varphi_1(t)^{p_0}}
\quad \text{for $\in(0,\infty)$.}
    \end{equation}
From this, we obtain by integration that
\begin{equation*}
\int_0^x\frac{1}{\varphi_0(t)^{p_0}}\int_0^t\frac{1}{\varphi_1(s)^{p_0}}d\varphi_0^{p_0}(s) d\varphi_0^{p_0}(t) \leq C \int_0^x\frac{1}{\varphi_1(t)^{p_0}} d\varphi_0^{p_0}(t)\quad 
\text{for $x\in(0,\infty)$.} 
\end{equation*}
Using Fubini's theorem on the left term of the preceding inequality and applying~\eqref{E:SufCondGenProp1AuxInt} to the right one, we obtain
\begin{equation*}
        \label{E:SufCondGenProp1AuxInt2}
\int_0^x\frac{1}{\varphi_1(s)^{p_0}}\int_s^x\frac{1}{\varphi_0(t)^{p_0}}d\varphi_0^{p_0}(t) d\varphi_0^{p_0}(s) \leq C^2\; \frac{\varphi_0(x)^{p_0}}{\varphi_1(x)^{p_0}}
\quad\text{for $x\in(0,\infty)$,} 
    \end{equation*}
    which gives
\begin{equation*}
\frac{1}{\varphi_0(x)^{p_0}}\int_0^x \log\Big(\frac{\varphi_0(x)^{p_0}}{\varphi_0(s)^{p_0}}\Big) \frac{1}{\varphi_1(s)^{p_0}} d\varphi_0^{p_0}(s) \leq C^2\; \frac{1}{\varphi_1(x)^{p_0}} \quad\text{for $x\in(0,\infty)$.} 
    \end{equation*}
Again,  from this, we obtain by integration that   
 \begin{equation*}
\int_0^t\frac{1}{\varphi_0(x)^{p_0}}\int_0^x \log\Big(\frac{\varphi_0(x)^{p_0}}{\varphi_0(s)^{p_0}}\Big) \frac{1}{\varphi_1(s)^{p_0}} d\varphi_0^{p_0}(s) d\varphi_0^{p_0}(x) 
\leq C^2 
\int_0^t\frac{1}{\varphi_1(x)^{p_0}} d\varphi_0^{p_0}(x)
\quad\text{for $t\in(0,\infty)$.} 
    \end{equation*}
Using again Fubini's theorem  and~\eqref{E:SufCondGenProp1AuxInt}, we get
 \begin{equation*}
\int_0^t\frac{1}{\varphi_1(s)^{p_0}}
\int_s^t \log\Big(\frac{\varphi_0(x)^{p_0}}{\varphi_0(s)^{p_0}}\Big) \frac{1}{\varphi_0(x)^{p_0}} 
d\varphi_0^{p_0}(x) d\varphi_0^{p_0}(s) \leq C^3\; \frac{\varphi_0(t)^{p_0}}{\varphi_1(t)^{p_0}}
\quad\text{for $t\in(0,\infty)$,} 
\end{equation*} 
which gives
\begin{equation*}
\frac{1}{\varphi_0(t)^{p_0}}\int_0^t \frac{1}{2}\log^2\Big(\frac{\varphi_0(t)^{p_0}}{\varphi_0(s)^{p_0}}\Big) \frac{1}{\varphi_1(s)^{p_0}} 
d\varphi_0^{p_0}(s) \leq C^3\; \frac{1}{\varphi_1(t)^{p_0}}
\quad\text{for $t\in(0,\infty)$.} 
    \end{equation*}  
    Iterating this process $k$ times for some $k\in\N$, we derive
   \begin{equation}
 \label{E:SufCondGenProp1AuxInt5}
\int_0^t \frac{1}{k!}\log^k\Big(\frac{\varphi_0(t)^{p_0}}{\varphi_0(s)^{p_0}}\Big) \frac{1}{\varphi_1(s)^{p_0}} d\varphi_0^{p_0}(s) 
\leq C^{k+1}\; \frac{\varphi_0(t)^{p_0}}{\varphi_1(t)^{p_0}}
\quad\text{for $t\in(0,\infty)$.} 
    \end{equation}
    We choose $\varepsilon_0 \in(0,1)$ such that $\varepsilon_0 C<1$. Then, from \eqref{E:SufCondGenProp1AuxInt5}, we infer that
\begin{equation*}
 \label{E:SufCondGenProp1AuxInt6}
\sum_{k=0}^{\infty}
\int_0^t \frac{\varepsilon_0^k}{k!} 
\log^k\Big(\frac{\varphi_0(t)^{p_0}}{\varphi_0(s)^{p_0}}\Big) 
\frac{1}{\varphi_1(s)^{p_0}} 
d\varphi_0^{p_0}(s) 
\leq \left(\sum_{k=0}^{\infty} C^{k+1} \varepsilon_0^k\right)  \; \frac{\varphi_0(t)^{p_0}}{\varphi_1(t)^{p_0}}
\quad\text{for $t\in(0,\infty)$,} 
    \end{equation*}
          which gives, after interchanging summation  with integral, for some $C_1>0$,
    \begin{equation*}
\int_0^t \Big(\frac{\varphi_0(t)^{p_0}}{\varphi_0(s)^{p_0}}\Big)^{\varepsilon_0} 
\frac{1}{\varphi_1(s)^{p_0}} 
d\varphi_0^{p_0}(s) 
\leq C_1 \; 
\frac{\varphi_0(t)^{p_0}}{\varphi_1(t)^{p_0}}
\quad\text{for $t\in(0,\infty)$,} 
    \end{equation*}
or
    \begin{equation}
 \label{E:SufCondGenProp1AuxInt7}
\int_0^t \Big(\frac{1}{\varphi_0(s)^{p_0}}\Big)^{\varepsilon_0} \frac{1}{\varphi_1(s)^{p_0}} 
d\varphi_0^{p_0}(s) \leq C_1 \; 
\frac{\varphi_0(t)^{p_0(1-\varepsilon_0)}}{\varphi_1(t)^{p_0}}
\quad\text{for $t\in(0,\infty)$.} 
    \end{equation}
By monotonicity, for any $t\in(0,\infty)$, we have that
$$\int_0^t \Big(\frac{1}{\varphi_0(s)^{p_0}}\Big)^{\varepsilon_0} \frac{1}{\varphi_1(s)^{p_0}} 
d\varphi_0^{p_0}(s) 
\geq 
\frac{1}{\varphi_1(t)^{p_0}} 
\int_0^t \Big(\frac{1}{\varphi_0(s)^{p_0}}\Big)^{\varepsilon_0} d\varphi_0^{p_0}(s) \approx 
\frac{\varphi_0(t)^{p_0(1-\varepsilon_0)}}{\varphi_1(t)^{p_0}}.
$$  
From this and from \eqref{E:SufCondGenProp1AuxInt7}, 
we conclude that 
\begin{equation*}
\int_0^t 
\Big(\frac{1}{\varphi_0(s)^{p_0}}\Big)^{\varepsilon_0} 
\frac{1}{\varphi_1(s)^{p_0}} 
d\varphi_0^{p_0}(s)\approx \; 
\frac{\varphi_0(t)^{p_0(1-\varepsilon_0)}}{\varphi_1(t)^{p_0}}
\quad\text{for $t\in(0,\infty)$.} 
    \end{equation*}
Hence, 
 \begin{equation} 
        t
        \mapsto \frac{\varphi_0(t)^{p_0(1-\varepsilon_0)}}{\varphi_1(t)^{p_0}}
        \quad\text{is equivalent to a non-decreasing function on $(0,\infty)$,}
    \end{equation}
    which implies \eqref{E:ratio-monotone} with $\varepsilon >0$ given by $1+\varepsilon=1/(1-\varepsilon_0)$.

We shall now show 
that~ \eqref{E:ratio-monotone}
implies~\eqref{E:SufCondGenProp2} with $A_i=\Lambda^{p_i}(w_i)$ for $i\in\{0,1\}$.
Note that if \eqref{E:ratio-monotone} holds for some $\varepsilon>0$, then 
$$
        t
        \mapsto \frac{\sigma(t)}{\varphi_1(t)^{\varepsilon_1}}
        \quad\text{is equivalent to a non-decreasing function on $(0,\infty)$}
$$
for any $\varepsilon_1\in(0,\varepsilon)$. By Lemma \ref{Lem:SufCondGeneralKLambda}, \eqref{E:SufCondGenProp2} with $A_i=\Lambda^{p_i}(w_i)$ for $i\in\{0,1\}$ is equivalent to \eqref{E:SufCondGenProp2**}. So, it suffices prove \eqref{E:SufCondGenProp2**}.
Fix $t\in(0,\infty)$. Then,

\begin{align*}
 \sigma(t)\left(\int_{t}^{\infty}\left(\frac{1}{\varphi_0(s)}\right)^{p_1}w_1(s)\,ds\right)^{\frac{1}{p_1}}
 &=  
  \sigma(t)\left(\int_{t}^{\infty}\left(\frac{\varphi_1(s)^{1+\varepsilon}}{\varphi_0(s)}\frac{1}{\varphi_1(s)^{1+\varepsilon}}\right)^{p_1}w_1(s)\,ds\right)^{\frac{1}{p_1}}\\
      &\lesssim  
      \sigma(t) \frac{\varphi_1(t)^{1+\varepsilon}}{\varphi_0(t)}   \left(\int_t^\infty (\varphi_1(s)^{p_1})^{-(1+\varepsilon)}\, w_1(s)\,ds\right)^{\frac{1}{p_1}}\\
    &\approx  
    \sigma(t) \frac{\varphi_1(t)^{1+\varepsilon}}{\varphi_0(t)}   \varphi_1(t))^{-\varepsilon} \\
     &= 1,
\end{align*}
which gives \eqref{E:SufCondGenProp2**}. Hence, \eqref{E:SufCondGenProp2} with $A_i=\Lambda^{p_i}(w_i)$ for $i\in\{0,1\}$ follows as well.

It remains to show that \eqref{E:SufCondGenProp2} with $A_i=\Lambda^{p_i}(w_i)$ for $i\in\{0,1\}$ implies~\eqref{E:ratio-monotone}. To this end, suppose  that \eqref{E:SufCondGenProp2} with $A_i=\Lambda^{p_i}(w_i)$ for $i\in\{0,1\}$ is true. 
Then, we also have \eqref{E:SufCondGenProp2**}.
Therefore, there is $C>0$, such that
    \begin{equation}
    \label{E:SufCondGenProp2AuxInt}
\frac{1}{\varphi_1(t)^{p_1}}  \int_t^{\infty}\frac{1}{\varphi_0(s)^{p_1} }d\varphi^{p_1}_1(s) \leq C\frac{1}{\varphi_0(t)^{p_1} }
\quad\text{for $t\in(0,\infty)$.} 
    \end{equation}
From this, we obtain by integration that
\begin{equation*}
\int_x^{\infty}
\frac{1}{\varphi_1(t)^{p_1}}  
\int_t^{\infty}
\frac{1}{\varphi_0(s)^{p_1}}
d\varphi_1^{p_1}(s)  d\varphi_1^{p_1}(t)
\leq C 
\int_x^{\infty}\frac{1}{\varphi_0(t)^{p_1}}
d\varphi_1^{p_1}(t)
\quad\text{for $x\in(0,\infty)$.} 
    \end{equation*}
Applying Fubini's theorem to the left term of the previous inequality, and the estimate \eqref{E:SufCondGenProp2AuxInt} to the right one, we obtain
\begin{equation*}
        \label{E:SufCondGenProp2AuxInt2}
\int_x^{\infty}\frac{1}{\varphi_0(s)^{p_1}}
\int_x^s\frac{1}{\varphi_1(t)^{p_1}}
d\varphi_1^{p_1}(t) d\varphi_1^{p_1}(s) 
\leq C^2\; 
\frac{\varphi_1(x)^{p_1}}{\varphi_0(x)^{p_1}}
\quad\text{for $x\in(0,\infty)$,} 
    \end{equation*}
    which gives
\begin{equation*}
\frac{1}{\varphi_1(x)^{p_1}}
\int_x^{\infty} 
\log\Big(\frac{\varphi_1(s)^{p_1}}{\varphi_1(x)^{p_1}}\Big) 
\frac{1}{\varphi_0(s)^{p_1}} 
d\varphi_1^{p_1}(s) 
\leq C^2\; 
\frac{1}{\varphi_0(x)^{p_1}}
\quad\text{for $x\in(0,\infty)$.} 
    \end{equation*}
Again, from this, we obtain by integration that   
 \begin{equation*}
\int_t^{\infty}
\frac{1}{\varphi_1(x)^{p_1}}
\int_x^{\infty} 
\log\Big(\frac{\varphi_1(s)^{p_1}}
{\varphi_1(x)^{p_1}}\Big) \frac{1}{\varphi_0(s)^{p_1}} d\varphi_1^{p_1}(s) d\varphi_1^{p_1}(x) 
\leq C^2 
\int_t^{\infty}\frac{1}{\varphi_0(x)^{p_1}} 
d\varphi_1^{p_1}(x)
\quad\text{for $t\in(0,\infty)$.} 
    \end{equation*}
Repeating the argument, we obtain
 \begin{equation*}
\int_t^{\infty}
\frac{1}{\varphi_0(s)^{p_1}}
\int_t^s \log\Big(\frac{\varphi_1(s)^{p_1}}{\varphi_1(x)^{p_1}}\Big) \frac{1}{\varphi_1(x)^{p_1}} 
d\varphi_1^{p_1}(x) d\varphi_1^{p_1}(s) 
\leq C^3\; 
\frac{\varphi_1(t)^{p_1}}{\varphi_0(t)^{p_1}}
\quad\text{for $t\in(0,\infty)$,} 
    \end{equation*} 
      which gives
\begin{equation*}
\frac{1}{\varphi_1(t)^{p_1}}
\int_t^{\infty} 
\frac{1}{2}\log^2\Big(\frac{\varphi_1(s)^{p_1}}
{\varphi_1(t)^{p_1}}\Big) \frac{1}{\varphi_1(s)^{p_1}}
d\varphi_1^{p_1}(s) 
\leq C^3\; 
\frac{1}{\varphi_0(t)^{p_1}}
\quad\text{for $t\in(0,\infty)$.} 
    \end{equation*}  
    Iterating the argument $k$ times, where $k\in\N$, one derives
   \begin{equation}
 \label{E:SufCondGenProp2AuxInt5}
\int_t^{\infty} 
\frac{1}{k!}
\log^k\Big(\frac{\varphi_1(s)^{p_1}}
{\varphi_1(t)^{p_1}}\Big) 
\frac{1}{\varphi_0(s)^{p_1}} 
d\varphi_1^{p_1}(s) 
\leq C^{k+1}\; 
\frac{\varphi_1(t)^{p_1}}
{\varphi_0(t)^{p_1}}
\quad\text{for $t\in(0,\infty)$.} 
    \end{equation}
    Fix $\varepsilon>0$ such that $\varepsilon C<1$. Then, from \eqref{E:SufCondGenProp2AuxInt5},  one obtains that
\begin{equation*}
 \label{E:SufCondGenProp2AuxInt6}
\sum_{k=0}^{\infty}\int_t^{\infty} 
\frac{ \varepsilon^k}{k!}
\log^k\Big(\frac{\varphi_1(s)^{p_1}}
{\varphi_1(t)^{p_1}}\Big) 
\frac{1}{\varphi_0(s)^{p_1}} 
d\varphi_1^{p_1}(s) 
\leq 
\left(\sum_{k=0}^{\infty} C^{k+1} \varepsilon^k\right)  \; \frac{\varphi_0(t)^{p_0}}{\varphi_1(t)^{p_0}}
\quad\text{for $t\in(0,\infty)$,} 
    \end{equation*}
    which now gives, after an interchange of summation  with integral, for some $C_1>0$,
    \begin{equation*}
\int_t^{\infty} 
\Big(\frac{\varphi_1(s)^{p_1}}
{\varphi_1(t)^{p_1}}\Big)^{\varepsilon} 
\frac{1}{\varphi_0(s)^{p_1}} 
d\varphi_1^{p_1}(s) 
\leq C_1 \; 
\frac{\varphi_1(t)^{p_1}}{\varphi_0(t)^{p_1}}
\quad\text{for $t\in(0,\infty)$,} 
    \end{equation*}
which it is equivalent to 
    \begin{equation}
 \label{E:SufCondGenProp2AuxInt7}
\int_t^{\infty}  
\Big(\varphi_1(s)^{p_1}\Big)^{\varepsilon} 
\frac{1}{\varphi_0(s)^{p_1}} 
d\varphi_1^{p_1}(s) 
\leq C_1 \; 
\frac{\varphi_1(t)^{p_1(1+\varepsilon)}}{\varphi_0(t)^{p_1}}
\quad\text{for $t\in(0,\infty)$.} 
    \end{equation}
   From this it also follows 
   \begin{equation*}  
 \frac{1}{\varphi_0(t)^{p_1}}
 \int_0^{t}  
 \Big(\varphi_1(s)^{p_1}\Big)^{\varepsilon}
 d\varphi_1^{p_1}(s)
 + 
 \int_t^{\infty}  
 \Big(\varphi_1(s)^{p_1}\Big)^{\varepsilon} 
 \frac{1}{\varphi_0(s)^{p_1}} 
 d\varphi_1^{p_1}(s) 
 \leq C_2 \; 
 \frac{\varphi_1(t)^{p_1(1+\varepsilon)}}{\varphi_0(t)^{p_1}}
 \quad\text{for $t\in(0,\infty)$.} 
    \end{equation*}
    Owing to the fact that the reverse estimate is  trivial, we in fact have 
      \begin{equation}  \label{E:SufCondGenProp2AuxInt9}
 \frac{1}{\varphi_0(t)^{p_1}}
 \int_0^{t}  
 \Big(\varphi_1(s)^{p_1}\Big)^{\varepsilon}
 d\varphi_1^{p_1}(s)
 + 
 \int_t^{\infty}  
 \Big(\varphi_1(s)^{p_1}\Big)^{\varepsilon} 
 \frac{1}{\varphi_0(s)^{p_1}} 
 d\varphi_1^{p_1}(s) 
 \approx  
 \frac{\varphi_1(t)^{p_1(1+\varepsilon)}}{\varphi_0(t)^{p_1}}
 \quad\text{for $t\in(0,\infty)$.} 
    \end{equation}
    Since the left hand side of \eqref{E:SufCondGenProp2AuxInt9} is equal to
    \begin{equation*}
         \int_0^{\infty}  \Big(\varphi_1(s)^{p_1} \Big)^{\varepsilon} \min\Big\{\frac{1}{\varphi_0(t)^{p_1}},\frac{1}{\varphi_0 (s)^{p_1}}\Big\} d\varphi_1^{p_1}(s)\quad\text{for $t\in(0,\infty)$,} 
    \end{equation*}
 we conclude that
 \begin{equation} 
    \mapsto\frac{\varphi_0(t)^{p_1}}{\varphi_1(t)
        ^{p_1(1+\varepsilon)}}
        \quad\text{is equivalent to a non-decreasing function on $(0,\infty)$,}
    \end{equation}
    which implies \eqref{E:ratio-monotone}. 
\end{proof}

\begin{proof}[Proof of Theorem~\ref{T:2}]
Fix $f\in S^{p_0}(w_0)+S^{p_1}(w_1)$. Then, 
    by Theorem~\ref{T:1.1} (note that every $f$ in $S^{p_0}(w_0)+S^{p_1}(w_1)$ satisfies $f^*\in A$), one has
    \begin{align}
        \label{E:doublestar}
        &K(f, \theta(t);S^{p_0}(w_0),S^{p_1}(w_1))
        \approx
        K(Tf^*, \theta(t);\Lambda^{p_0}(\widetilde{w_0}),\Lambda^{p_1}(\widetilde{w_1}))
        \quad\text{for $t\in(0,\infty)$.}
    \end{align}
    For each $i\in\{0,1\}$, let $\varphi_i$ be the fundamental function of $\Lambda^{p_i}(\widetilde{w_i})$ and let $\widetilde{\sigma}=\frac{\varphi_1}{\varphi_0}$. Then the change of variables $s\mapsto \frac1s$ in the integral in~\eqref{E:varphi} gives
    \begin{equation*}
        \varphi_i(t)=\left(
        \int_{0}^{t}
        \widetilde{w_i}(s)\dd s
        \right)^{\ipi}
        =
        \psi_i(\tfrac{1}{t})
        \quad\text{for $i\in\{0,1\}$ and every $t\in(0,\infty)$,}
    \end{equation*}
    and, in turn,
    \begin{equation}
        \label{E:omega-sigma}
        \theta (t) = \frac{1}{\tilde\sigma\left(\frac{1}{t}\right)}
        \quad\text{for $t\in(0,\infty)$.}
    \end{equation}
    Consequently, the function
    \begin{equation*}
        t\mapsto \frac{\widetilde{\sigma}(t)}{\varphi_0(t)^{\varepsilon}}=
        \frac{\left(
        \int_{0}^{t}
        \widetilde{w_1}(s)\dd s
        \right)^{\ipon}}
        {\left(
        \int_{0}^{t}
        \widetilde{w_0}(s)\dd s
        \right)^{\ipo(1+\varepsilon)}}
        =
        \frac{1}{\theta(\frac{1}{t})\psi_0(\frac{1}{t})^{\varepsilon}}
    \end{equation*}
    is, by the assumption \eqref{E:cond3}, equivalent to a non-decreasing function on $(0,\infty)$.
    Therefore, the condition~\eqref{E:ratio-monotone} is satisfied with $\sigma$ and $\varphi_1$ replaced by $\tilde\sigma$ and $\varphi_0$, respectively. This implies, via Lemma~\ref{Lem:LorentLambdaSpaceSufCondGeneralK}, that both the conditions~\eqref{E:SufCondGenProp1} and~\eqref{E:SufCondGenProp2} hold, with $\sigma$ and $\varphi_1$ replaced by $\tilde\sigma$ and $\varphi_0$, respectively. This means, owing to Lemma~\ref{Lem:SufCondGeneralK}, that both the conditions~\eqref{E:GenProp1} and~\eqref{E:GenProp2} hold with $\sigma$ replaced by $\tilde\sigma$ and with 
    $A_0=\Lambda^{p_1}(\widetilde{w_1})$, 
    $A_1=\Lambda^{p_0}(\widetilde{w_0})$. Therefore, by Theorem~\ref{T:GeneralK}, applied to the couple $(\Lambda^{p_1}(\widetilde{w_1}),\Lambda^{p_0}(\widetilde{w_0}))$ in place of 
    $(A_0,A_1)$, we obtain
        \begin{align*}
        &K\left(g,\widetilde{\sigma}(t);\Lambda^{p_1}(\widetilde{w_1}),\Lambda^{p_0}(\widetilde{w_0})\right)
            \approx
        \left\|\chi_{(0,t)}g^*\right\|_{\Lambda^{p_1}(\widetilde{w_1})}
        +
        \widetilde\sigma(t)
        \left\|\chi_{(t,\infty)}g^*\right\|_{\Lambda^{p_0}(\widetilde{w_0})}
    \end{align*}
    for every $g\in \Lambda^{p_1}(\widetilde{w_1})
    +\Lambda^{p_0}(\widetilde{w_0})$ and every $t\in(0,\infty)$.
    Thus, by Lemma~\ref{lemma:equivalentKfunctIntegral}, 
   \begin{align}
        \label{E:K-swapped}
        &K\left(g,\widetilde{\sigma}(t);\Lambda^{p_1}(\widetilde{w_1}),\Lambda^{p_0}(\widetilde{w_0})\right)
           \approx
        \left(
        \int_{0}^{t}
        g^*(s)^{p_1}
        \widetilde{w_1}(s)\dd s
        \right)^{\ipon}
            +
        \widetilde{\sigma}(t)
        \left(
        \int_{t}^{\infty}
        g^*(s)^{p_0}
        \widetilde{w_0}(s)\dd s
        \right)^{\ipo}
    \end{align}
    for every $g\in \Lambda^{p_1}(\widetilde{w_1})
    +\Lambda^{p_0}(\widetilde{w_0})$ and every $t\in(0,\infty)$. Since
    \begin{equation*}
        K(f,s;X_0,X_1)
        =
        s\,K(f,\tfrac{1}{s};X_1,X_0)
        \quad\text{for any admissible $f$ and $s\in(0,\infty)$,}
    \end{equation*}
    using~\eqref{E:omega-sigma},~\eqref{E:K-swapped},~\eqref{E:prequel-S-Lambda} and \eqref{E:prequel-S-Lambda-dual}, we finally arrive at
    \begin{align}
        \label{E:K-final}
        K&\left(Tf^*,\theta(t);\Lambda^{p_0}(\widetilde{w_0}),\Lambda^{p_1}(\widetilde{w_1})\right)
        =
        \theta(t)
        K\left(Tf^*,\frac{1}{\theta(t)};
        \Lambda^{p_1}(\widetilde{w_1}),
        \Lambda^{p_0}(\widetilde{w_0})\right)
            \\
            &=
        \theta(t)
        K\left(Tf^*,\widetilde{\sigma}(\tfrac{1}{t});
        \Lambda^{p_1}(\widetilde{w_1}),
        \Lambda^{p_0}(\widetilde{w_0})\right) \nonumber
            \\
        &\approx
        \theta(t)
        \left(
        \int_{0}^{\frac{1}{t}}
        Tf^*(s)^{p_1}
        \widetilde{w_1}(s)\dd s
        \right)^{\ipon}
        +\theta(t)\widetilde{\sigma}(\tfrac{1}{t})
        \left(
        \int_{\frac{1}{t}}^{\infty}
        Tf^*(s)^{p_0}
        \widetilde{w_0}(s)\dd s
        \right)^{\ipo} \nonumber
            \\
        &=
        \theta(t)
        \left(
        \int_{t}^{\infty}
        \left(
        f^{**}(s)-f^*(s)\right)^{p_1}
        w_1(s)\dd s
        \right)^{\ipon}
        +
        \left(
        \int_{0}^{t}
        \left(
        f^{**}(s)-f^*(s)\right)^{p_0}
        w_0(s)\dd s
        \right)^{\ipo}  \nonumber
    \end{align}
    for $f\in S^{p_0}(\widetilde{w_0})
    +S^{p_1}(\widetilde{w_1})$ and $t\in(0,\infty)$.
    The assertion now follows from~\eqref{E:K-final} and~\eqref{E:doublestar}.
\end{proof}

\begin{proof}[Proof of Corollary~\ref{C:1}]
    We will show that the assertion follows as a special case of~\eqref{E:K-for-S}, in which we take $p_0=p_1=p$, $w_0(t)=1$ and $w_1(t)=t^{-\alpha}$ for $t\in(0,\infty)$. We only have to verify that, for this choice of parameters, the assumptions of Theorem~\ref{T:2} are satisfied. 
    
    First, observe that $S^{p_0}(w_0)=L^p$ by~\cite[Chapter~5, Proposition~7.12, page~384]{Ben:88}. Next, one has
    \begin{equation}
        \label{E:psi-equivalent-to-powers}
        \psi_0(t)\approx t^{\frac{1-p}{p}},\qquad \psi_1(t)\approx t^{\frac{1-p-\alpha}{p}}
        \quad\text{for $t\in(0,\infty)$.}
    \end{equation}
    Consequently, since $p>1$, the assumption~\eqref{E:psi-infinity-at-zero} is satisfied. 
    The condition~\eqref{E:cond1} is obviously valid due to the fact that both $\psi_0$ and $\psi_1$ are equivalent to decreasing power functions. The condition~\eqref{E:cond2} amounts to the existence of a positive constant $C$ such that
    \begin{equation*}
        \int_{0}^{t}1\,ds
        \le C
        t^{p}\int_{t}^{\infty}s^{-p}\,ds
        \quad\text{for $t\in(0,\infty)$,}
    \end{equation*}
    which is evident, once again thanks to the fact that $p>1$.
    Finally, owing to~\eqref{E:psi-equivalent-to-powers}, the function $\theta$ from~\eqref{E:sigma} satisfies
    $\theta(t)\approx t^{\frac{\alpha}{p}}$ for $t\in(0,\infty)$ with constants of equivalence independent of $t$. Therefore, the condition~\eqref{E:cond3} is satisfied with any choice of $\varepsilon\in(0,\frac{\alpha}{p-1})$.
\end{proof}

\begin{proof}[Proof of Corollary~\ref{C:2}]
    The proof is based on a repeated alteration of~\eqref{E:kolyada-inequality-2} and~\eqref{E:formula-rearrangement}.
    Let $k\in\N$, $k\ge 2$, and let $f$ be a $(k-1)$-times weakly differentiable function which satisfies~\eqref{E:condition-at-infty}. We claim that 
    \begin{equation}
        \label{E:kolyada-BS}
        f^{**}(t)-f^*(t)
        \le
        \frac{n^{\frac32k-2}}{(k-2)!}
        t^{\frac{1}{n}}
        \int_{t}^{\infty}
        s^{\frac{k-1}{n}-1}
        |\nabla^k f|^{**}(s)\dd s
        \quad\text{for $t\in(0,\infty)$.}
    \end{equation}
    We shall prove~\eqref{E:kolyada-BS} by induction. Fix $t\in(0,\infty)$. By~\eqref{E:kolyada-inequality-2} (used twice) and~\eqref{E:formula-rearrangement},    
    \begin{align*}
        f^{**}(t)-f^*(t)
        &\le
        \sqrt{n} 
        t^{\frac{1}{n}}
        |\nabla f|^{**}(t)
            =\sqrt{n} t^{\frac{1}{n}}
        \int_{t}^{\infty}
        \frac{|\nabla f|^{**}(s)-|\nabla f|^{*}(s)}{s}\dd s
            \\
        &\le 
        n t^{\frac{1}{n}}
        \int_{t}^{\infty}s^{\frac{1}{n}-1}|\nabla^{2} f|^{**}(s)\dd s,
    \end{align*}
hence the base case ($k=2$) of~\eqref{E:kolyada-BS} follows. Now, suppose that~\eqref{E:kolyada-BS} holds for some $k\in\N$, $k\ge 2$, and, once again, fix $t\in(0,\infty)$. Then, by~\eqref{E:formula-rearrangement}, Fubini's theorem and~\eqref{E:kolyada-BS}, one has
    \begin{align*}
        \int_{t}^{\infty}s^{\frac{k-1}{n}-1}|\nabla^{k} f|^{**}(s)\dd s
        &=
        \int_{t}^{\infty}
        s^{\frac{k-1}{n}-1}
        \int_{s}^{\infty}
        \frac{|\nabla^k f|^{**}(\tau)-|\nabla^k f|^{*}(\tau)}{\tau}\dd \tau\dd s
            \\
        &=
        \int_{t}^{\infty}
        \frac{|\nabla^k f|^{**}(\tau)-|\nabla^k f|^{*}(\tau)}
        {\tau}
        \int_{t}^{\tau}s^{\frac{k-1}{n}-1}\dd s
        \dd \tau
            \\
        &\le
        \int_{t}^{\infty}
        \frac{|\nabla^k f|^{**}(\tau)-|\nabla^k f|^{*}(\tau)}
        {\tau}
        \int_{0}^{\tau}s^{\frac{k-1}{n}-1}\dd s
        \dd \tau
            \\
        &=
        \frac{n}{k-1} 
        \int_{t}^{\infty}
        \big(|\nabla^k f|^{**}(\tau)-|\nabla^k f|^{*}(\tau)\big)
        \tau^{\frac{k-1}{n}-1}\dd \tau
            \\
        &\le 
        \frac{n^{\frac{3}{2}}}{k-1} 
        \int_{t}^{\infty}
        s^{\frac{k}{n}-1}|\nabla^{k+1} f|^{**}(s)\dd s.
    \end{align*}
    Plugging this back into~\eqref{E:kolyada-BS}, which still holds, with the fixed $k$, owing to the induction preposition, and using the fact that $t$ was arbitrarily chosen, we get 
    \begin{equation*}
        f^{**}(t)-f^*(t)
        \le
        \frac{n^{\frac32(k+1)-2}}{(k-1)!}
        t^{\frac{1}{n}}
        \int_{t}^{\infty}
        s^{\frac{k}{n}-1}
        |\nabla^{k+1} f|^{**}(s)\dd s
        \quad\text{for $t\in(0,\infty)$.}
    \end{equation*}
    This yields~\eqref{E:kolyada-BS} with $k$ replaced by $k+1$, and thence establishes the induction step. Altogether, the claim now follows, by the induction process, for general integer $k\ge 2$.
    
    Raising the terms on each side of~\eqref{E:kolyada-BS} to $p$, then multiplying each of them by $t^{-\frac{kp}{n}}$, integrating the resulting expressions with respect to $t$ over $(0,\infty)$, and finally taking the $p$-th roots, we arrive at
    \begin{align*}
        \left(\int_{0}^{\infty}\big(f^{**}(t)-f^*(t)\big)^{p}
        t^{-\frac{kp}{n}}\dd t\right)^{\frac{1}{p}}
        \le      
        \frac{n^{\frac32k-2}}{(k-2)!}
        \left(\int_{0}^{\infty}
        t^{\frac{(1-k)p}{n}}
        \left(\int_{t}^{\infty}
        s^{\frac{k-1}{n}-1}|\nabla^{k} f|^{**}(s)\dd s
        \right)^{p}
        \dd t\right)^{\frac{1}{p}}.
    \end{align*}
    We now apply the weighted Hardy inequality from~\cite[Chapter~V, Lemma 3.14(ii), p.~196]{Ste:71} with $q=p$ and $r=1+\frac{(1-k)p}{n}$ to the expression on the right-hand side of the last inequality. Note that in order to perform this step, we need the upper bound for $k$ from~\eqref{E:conditions-on-k} to hold. We obtain
    \begin{align*}
        \left(\int_{0}^{\infty}\big(f^{**}(t)-f^*(t)\big)^{p}
        t^{-\frac{kp}{n}}\dd t\right)^{\frac{1}{p}}
        \le      
        C_{n,k,p}
        \left(\int_{0}^{\infty}
        |\nabla^{k} f|^{**}(t)^{p}
        \dd t\right)^{\frac{1}{p}}
    \end{align*}
    with some positive constant $C_{n,k,p}$ depending only on the indicated parameters and, in particular, independent of $f$.
    This can be interpreted as the Sobolev embedding
    \begin{equation*}
        V^{k,p}(\rn)
        \to
        S^{p}(t^{-\frac{kp}{n}})(\rn),
    \end{equation*}   
    in which $V^{k,p}(\rn)$ denotes the \emph{homogeneous Sobolev space} defined as the collection of all weakly differentiable functions $u$ whose weak $k$-th gradient satisfies 
    $|\nabla^k u|\in L^p(\rn)$, endowed with the seminorm $\|\nabla^k u\|_{L^p(\rn)}$.
    We complement this embedding with the trivial one,
    \begin{equation*}
        L^{p}(\rn)
        \to
        L^{p}(\rn),
    \end{equation*}
    and we derive the corresponding $K$-inequality, obtaining thereby
    \begin{equation}
        \label{E:K-inequality}
        K(f,t^{\frac kn};L^{p}(\rn),S^{p}(t^{-\frac{kp}{n}})(\rn))
        \le C
        K(f,t^{\frac kn};L^{p}(\rn),V^{k,p}(\rn)).
    \end{equation}
    Now, by Corollary~\ref{C:1} with $\alpha=\frac{kp}{n}$, we get
    \begin{align}
        \label{E:term-on-left}
        &K(f,t^{\frac kn};L^{p}(\rn),S^{p}(t^{-\frac{kp}{n}})(\rn))
            \\
        &\qquad\approx
        \left(\int_{0}^{t}
        \left(f^{**}(s)-f^*(s)\right)^{p}\dd s
        \right)^{\frac{1}{p}}
        +
        t^{\frac{k}{n}}
        \left(\int_{t}^{\infty}
        \left(f^{**}(s)-f^*(s)\right)^{p}s^{-\frac{kp}{n}}\dd s
        \right)^{\frac{1}{p}},\nonumber
    \end{align}
    while from~\cite[Chapter~5, Estimate~(4.42), p.~341]{Ben:88} we infer that
     \begin{equation}
        \label{E:term-on-right}        
        K(f,t^{\frac kn};L^{p}(\rn),V^{k,p}(\rn))
        \ls
        \omega_k(f,t^{\frac{1}{n}})_p.
    \end{equation}
    The result now follows from~\eqref{E:K-inequality},~\eqref{E:term-on-left} and~\eqref{E:term-on-right}.
\end{proof}


\bibliographystyle{abbrv}
\bibliography{interpolation-of-S-spaces}

\end{document}